\theoremstyle{plain}
\def\endproof{\hspace*{\fill}\mbox{\ \rule{.1in}{.1in}}\medskip }
\newtheorem{theorem}{Theorem}[section]
\newtheorem{corollary}[theorem]{Corollary}
\newtheorem{lemma}[theorem]{Lemma}
\newtheorem{proposition}[theorem]{Proposition}
\theoremstyle{definition}
\newtheorem{example}[theorem]{Example}
\newtheorem{remark}[theorem]{Remark}
\newcommand{\R}{\mathbb{R}} 
\newcommand{\bee}{\begin{equation}}
\newcommand{\eee}{\end{equation}} 
\newcommand{\bees}{\begin{equation*}}
\newcommand{\eees}{\end{equation*}}
\numberwithin{equation}{section}
\numberwithin{figure}{section}
\begin{document}

\title[the metric-restricted inverse design problem]
{The metric-restricted inverse design problem}
\author{Amit Acharya, Marta Lewicka and Mohammad Reza Pakzad }
\address{Amit Acharya, Civil \& Environmental Engineering,
Carnegie Mellon University, Pittsburgh, PA 15213-3890}
\address{Marta Lewicka and Mohammad Reza Pakzad, University of Pittsburgh, Department of Mathematics, 
139 University Place, Pittsburgh, PA 15260}
\email{acharyaamit@cmu.edu, lewicka@pitt.edu, pakzad@pitt.edu}

\date{today}

\begin{abstract} 
We study a class of design problems in solid mechanics, leading to a variation 
on the classical question of equi-dimensional embeddability of Riemannian manifolds. 
In this general new context, we derive a necessary and sufficient existence condition, 
given through a system of total differential equations, and
discuss its integrability. In the classical context, 
the same approach yields conditions of immersibility of a given metric in terms of the
Riemann curvature tensor. In the present situation, 
the equations do not close in a straightforward manner, and successive
differentiation of the compatibility conditions leads to a 
new algebraic description of integrability. 
We also recast the problem in a variational setting and analyze the infimum 
of the appropriate incompatibility energy, resembling the 
non-Euclidean elasticity. We then derive a $\Gamma$-convergence result
for dimension reduction from $3$d to $2$d in the Kirchhoff energy scaling regime. 
\end{abstract}

\maketitle

\section{The metric-restricted inverse design problem}

Assume ${\mathcal T}$ is a manifold 
of material types, differentiated by their structure, density,
swelling-shrinkage rates and other qualities.
We let any material type $T \in \mathcal{T}$ be naturally endowed  with a 
prestrain  $\bar g(T)$, where $\bar g: \mathcal{T} \to {\mathbb
  R}^{2\times 2}_{\rm sym,pos}$ is a given smooth mapping, taking values in
the symmetric positive definite tensors. 

Suppose now that we need to manufacture a $2$-dimensional membrane $S\subset
\R^3$, where at any given point $p\in S$ a  material of type 
$T(p)\in{ \mathcal T}$    must be used for a given $T: S\to {\mathcal T}$. The question is how to print a 
thin film $U\subset \R^2$ in a manner that the activation $u:U \to \R^3$ of the prestrain 
in the film would result in  a deformation leading eventually to the
desired surface shape $S$. 

\medskip

The above described problem is natural as a design question in various areas of solid mechanics, even though
the involved tensors are not intrinsic geometric objects. For
example, it includes the subproblems and extensions to higher dimensions:
\begin{itemize}
\item[(a)] Given the deformed configuration of an elastic $2$-dimensional membrane and the
  rectangular Cartesian components of the Right Cauchy-Green tensor
  field of a deformation, mapping a flat undeformed reference of the
  membrane to it, find the flat reference configuration and the
  deformation of the membrane\footnote{
We thank Kaushik Bhattacharya for bringing this problem to our attention.}.
\item[(b)] Given the deformed configuration of a $3$-dimensional body and the
  rectangular Cartesian components of the Right Cauchy-Green tensor
  field of the deformation, mapping a reference configuration to it,
  find the reference configuration and the deformation. 
\item[(c)] Suppose the current configuration of a $3$-dimensional, plastically deformed
  body is given, and on it is specified the rectangular Cartesian
  components of a plastic distortion $F_p$. Find a reference
  configuration and a deformation $\zeta$, mapping this reference to the given
  current configuration, such that the latter is stress-free. Assume
  that the stress response of the material is such that the stress
  vanishes if and only if $(\nabla\zeta (F_{p})^{-1})^T (\nabla\zeta (F_{p})^{-1}) = \mbox{Id}_3$.
\end{itemize}

\medskip

In view of \cite{klein, Sharon2, lepa}, the activation 
$u$ must be an isometric immersion of the Riemannian manifold $(U, \mathcal{G})$ into $\R^3$, 
where $\mathcal {G}$ is the prestrain in the flat (referential) thin film.
Our design problem requires hence that we find {\em an unknown reference configuration} $U\subset \R^2$,  
{\em an unknown material distribution}
${\mathfrak T}: U \to \mathcal{T}$ and {\em an unknown deformation} $u: U \to \R^3$ such that 
\begin{itemize}
\item[(i)] $S= u(U)$, 
\item[(ii)] For any $x \in U$, the point  $u(x)$ 
carries a material of type $T(u(x))$, i.e. $T(u(x)) = \mathfrak{T}(x)$,
\item[(iii)] $\nabla u(x)^T \nabla u(x) = {\mathcal G} (x) := \bar g(\mathfrak{T}(x)).$
\end{itemize}

\smallskip

\noindent If the membrane $S\subset\mathbb{R}^3$ is  
a smooth surface, then letting $g:= \bar g\circ T: S\to \R^{2\times2}_{sym, pos}$,
the conditions (i)-(iii) simplify to finding a domain $U\subset\mathbb{R}^2$ and a
bijection $u:U\rightarrow S$, such that:
\begin{equation}\label{jeden}
(\nabla u)^T(\nabla u) (x) = g(u(x)) \qquad \forall x\in U.
\end{equation}
The smoothness of $g$ is determined by the regularity of $\mathcal{T}$
and of the mappings $\bar g$ and $T$.

\medskip

Essentially, in all of the applications defined above (e.g. membrane,
3-d), we are dealing with a general class of nonlinear elastic
constitutive assumptions involving pre-strain, with the requirement
that  the stored energy density evaluated at the Identity tensor (of
appropriate dimensionality) attain the value zero; in other words, we
look for stress-free deformations of a prestrained body. Given a
prestrain field \emph{specified on the target configuration}, we
explore the question of existence of deformations that allow such a
minimum energy state to be attained pointwise, as well as the
characterization of the constraints on the pre-strain field that
allows such attainment. In the language of mechanics, note that the
question (\ref{jeden}) may be rephrased as looking for deformations
$u$ of the reference $U$ such that:
\[
\left[ \nabla u \left(\sqrt{ g(u)}^{-1}\right) \right]^T \left[ \nabla
  u \left(\sqrt{ g(u)}^{-1}\right) \right] = \mbox{Id}, 
\] 
where the expression on the left-hand-side of the above equality is
the sole argument of the frame-indifferent nonlinear elastic energy
density function of the material. Thus, $\sqrt{g(u)}^{-1}$ needs to be capable of annihilation by the
right stretch tensor of a deformation, a condition that is expressed
in terms of spatial derivatives of $g(u)$ on $U$; the main difficulty
is that both $u$ and $U$ are unknown, so this differential condition
cannot simply be written down and a more sophisticated idea than the
standard vanishing of the Riemann-Christoffel tensor of a metric is
needed. 

\medskip

The inverse design problem that we study in this paper, can be further rephrased as follows. 
Let $y:\Omega\rightarrow \mathbb{R}^3$, be a smooth parametrization of
$S=y(\Omega)$. Find a change of variable
$\xi:\Omega\rightarrow U$ so that the pull back of the
Euclidean metric on $S$ through $y$ is realized by the following formula: 
\begin{equation}\label{dwa}
 (\nabla y)^T\nabla y   = (\nabla\xi)^T (g\circ y) \nabla \xi  \qquad
\mbox{in } \Omega.
\end{equation}
Clearly, once a solution $\xi$ of \eqref{dwa} is found, 
the  material type distribution $\mathfrak{T}$, which is needed for the construction of the printed film $U$, 
can be calculated by:
$$ \mathfrak T:=   T\circ y\circ \xi^{-1} :U=  \xi(\Omega) \to\mathcal{T}, $$ 
since $u=y\circ\xi^{-1}$ satisfies \eqref{jeden} and consequently
the properties (i)-(iii) hold.
Any $\xi$ satisfying (\ref{dwa}) is an isometry between the Riemannian manifolds $(\Omega,
\tilde G)$  and $(U, G\circ \xi^{-1})$, with  metrics:
\begin{equation}\label{trzy}
\tilde G=(\nabla y)^T\nabla y \quad \mbox{ and } \quad G=g\circ y
\quad \mbox{ on }~~~\Omega.
\end{equation}
For convenience of the reader, we gather some of our notational symbols in
Figure \ref{fig:geometry}. 

\medskip
 
The same problem can be set up for a three dimensional shape $S
\subset \R^3$. In that case, the prestrain mapping $\bar g$ must take values in $\R^{3\times 3}_{sym, pos}$ and
the printed prestrained reference configuration is modeled by  
$U\subset \R^3$. The equation to be solved is still given by
\eqref{dwa}, now posed in $\Omega \subset \R^3$. Equivalently, a solution $u:U \to S$ to
\eqref{jeden} is obtained as in (iii) above for $\mathfrak T: = T
\circ u$ and it is the absolute minimizer of the prestrain elastic energy (see e.g. \cite{lepa}): 
\begin{equation}\label{uenergy}
 E(u, U, {\mathcal G}): = \int_U {\rm dist^2}\big(  (\nabla u) {\mathcal G} ^{-1/2}, SO(3)\big)~\mbox{d}x =0.
\end{equation} 
Here,  ${\rm dist} (F, SO(3))$ stands for the distance of a matrix $F$ from the compact set 
$SO(3)$, with respect to the Hilbert-Schmidt norm. 
What distinguishes our problems from the classical isometric immersion problem in differential
geometry, where one looks for an isometric mapping between two given
manifolds $(\Omega, \tilde G)$ and $(U, \mathcal{G})$, 
is that the target  manifold $U=\xi(\Omega)$  
and its Riemannian metric $\mathcal{G}= G\circ \xi^{-1}$   are  only
given a-posteriori, after the solution is found.  Note that only when $G$ 
is constant, the target metric becomes a-priori well defined and can be extended over
the whole of $\R^n$, as it is independent of $\xi$, and then the
problem reduces to the classical case (see Example \ref{ex4.4} and a
few other similar cases in Examples \ref{ex4.5}  and \ref{lambda-mu}).
\begin{figure}[h]
\vspace{-5mm}
\hspace{-0.6cm}\includegraphics[width=5.25in, height=3.5in]{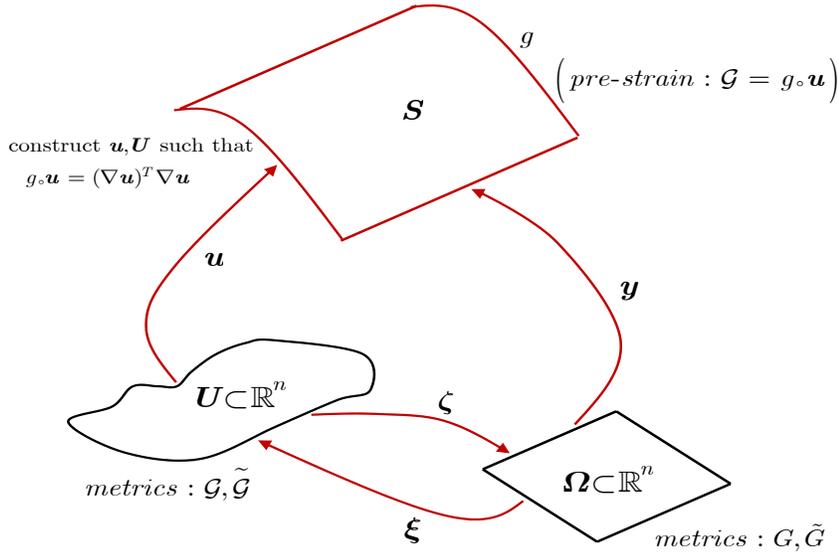}
\vspace{-0.7cm} \caption{Geometry of the problem.}
\label{fig:geometry}
\end{figure}

\begin{remark}
Note that the minimizing solution in the equidimensional problem (\ref{uenergy}) is unique up to rigid motions.
Hence, If  the configuration $(U,{\mathcal G})$ is printed, the elastic  
body is bound to take the required shape $S$ as requested in the
design problem. Note that in case the required shape $S$   
is two dimensional, uniqueness fails due to one more degree of freedom as the deformation 
$u$ of $U\subset \R^2$ takes values in a higher dimensional space
$\R^3$. In this case, other restrictions on $u$ have to be imposed, to
solve the original design problem. One particular remedy is to
consider the membrane as a thin three dimensional body; we implement
this approach in Section \ref{dimred}.  
Note that solving \eqref{jeden} 
directly in two dimensions implies that a compatible reference
configuration can be found, as formulated in (a) above.    

Another approach in the two dimensional case would be to use 
a parametrization $y:\Omega \to S$ which is bending energy minimizing
among all  other mappings $\tilde y:\Omega \to S$  that induce the
same metric $(\nabla y)^T \nabla y$. This would necessitate a study of the bending energy effects, which is beyond 
the scope of the present paper. For the sake of comparison, we mention
a parallel problem, studied in \cite{kim}, where the authors calculate
the prestrain to be printed in the thin film by minimizing a full
three dimensional energy of deformation, consisting of both stretching
and bending. The nature of the problem in \cite{kim} is different than
ours, in as much as the material constraints are not present.
\end{remark}

\medskip

The paper is organized as follows.
In Sections 2 and 3, we study the above mentioned 
variational formulation of (\ref{dwa}) and analyze the infimum value
of the appropriate incompatibility energy, resembling the
non-Euclidean elasticity \cite{lepa}. We derive a
$\Gamma$-convergence result for the dimension reduction from $3$d to
$2$d, in the Kirchhoff-like energy scaling regime, corresponding to the
square of thickness of the thin film. In Sections 4-9, we formulate
(\ref{dwa}) as an  algebraically constrained 
system of total differential equations, in which the second
derivatives of $\xi$ are expressed in terms of its first derivatives
and the Christoffel symbols of the involved metrics.  The idea is then 
to investigate the integrability conditions of this
system. When this method is applied in the context of the standard 
Riemannian isometric immersion 
problem, the parameters involving $\xi$ can be  removed from the
conditions and the intrinsic conditions of immersibility will be given
in terms of the Riemann curvature tensors. In our situation, the
equations do not close in a straightforward manner, and successive
differentiation of the compatibility conditions leads to a more
sophisticated algebraic description of solvability. This approach has 
been adapted in \cite{LCG-acha} for deriving 
compatibility conditions for the Left Cauchy-Green tensor. 


\bigskip

\noindent {\bf Acknowledgments.}
This project is based upon work supported by, among others, the National Science
Foundation. A.A. acknowledges support in part from grants NSF-CMMI-1435624, NSF-DMS-1434734, and ARO W911NF-15-1-0239. M.L. was partially supported by the NSF grants DMS-0846996 and DMS-1406730. M.R.P. was partially supported by the NSF grant
DMS-1210258. A part of this work was completed while the second and
the third authors visited the Forschungsinstitut f\"ur Mathematik at ETH 
(Zurich, Switzerland). The institute's hospitality is gratefully acknowledged.

\section{A variational reformulation of the problem (\ref{trzy})}\label{var_reform}

In this section, we recast the problem \eqref{trzy} in a variational
setting, similar to that of non-Euclidean elasticity \cite{lepa}. Using
the same arguments as in \cite{FJMgeo, lepa}, we will analyze the
properties of the infimum value of the appropriate incompatibility energy,
over the natural class of deformations of $W^{1,2}$ regularity.

We will first discuss the problem \eqref{dwa} in the general $n$-dimensional
setting and only later restrict to the case $n=2$ (or $n=3$). 
Hence, we assume that $\Omega$ is an open, bounded, simply connected and
smooth subset of $\mathbb{R}^n$. We look for a bilipschitz map $\xi:\Omega\to
U:=\xi(\Omega)$, satisfying (\ref{trzy}) and which is orientation preserving:
\begin{equation}\label{orienta}
\det\nabla\xi > 0 \quad \mbox{ in } ~\Omega. 
\end{equation} 

\medskip

We begin by rewriting (\ref{trzy}) as: 
\begin{equation}\label{trzy3}
\tilde G = \big(G^{1/2}\nabla \xi\big)^T \big(G^{1/2}\nabla
\xi\big).
\end{equation}
Note that, in view of the polar decomposition theorem of
matrices,  a vector field $\xi:\Omega\to\mathbb{R}^n$
is a solution to (\ref{trzy3}), augmented by the constraint
(\ref{orienta}), both valid a.e. in $\Omega$, if and only if:
\begin{equation}\label{equ-rotation} 
\forall a.e.~ x\in \Omega \quad \exists  R= R(x)\in SO(n) \qquad G^{1/2}
\nabla \xi   = R ~\tilde G^{1/2},
\end{equation} 
where $G^{1/2}(x)$ denotes the unique symmetric positive definite
square root of $G(x)\in\mathbb{R}^{n\times n}_{sym, pos}$, while
$SO(n)$ stands for the set of special orthogonal matrices. Define:
\begin{equation}\label{energy}
E(\xi) = \int_\Omega {\rm dist^2}\big( G^{1/2} (\nabla\xi) \tilde
G^{-1/2}, SO(n)\big)~\mbox{d}x \qquad \forall \xi \in W^{1,1}_{loc}(\Omega, \R^n), 
\end{equation}
where  ${\rm dist} (F, SO(n))$ is the calculated distance of a matrix $F$ from the compact set 
$SO(n)$, with respect to the Hilbert-Schmidt norm of matrices.
It immediately follows that $E(\xi)=0$  if and only if  $\xi$ is a solution to
\eqref{trzy3} and hence to (\ref{trzy}), together with
(\ref{orienta}). Also, note that $E(\xi)<\infty$ if and only if $\xi \in
W^{1,2}(\Omega, \R^n)$, as can be easily deduced from the inequality: 
\begin{equation}\label{coerce}
\forall F\in \R^{n\times n} \qquad |F|^2 \le C |G^{1/2} F \tilde G^{-1/2}|^2,
\end{equation} 
valid with a constant $C>0$ independent of $x$ and $F$.

Finally, observe that, due to the uniform positive definiteness of the matrix field $G$:
\begin{equation}\label{length} 
 |F |^2= \sum_{i=1}^n  |Fe_i|^2  \le C \sum_{i=1}^n \langle Fe_i,  G
 Fe_i\rangle \leq C\mbox{trace }(F^TG F) . 
\end{equation}  

\begin{proposition}\label{regularity}
(i) Assume that the metrics $G, \tilde G $ are $\mathcal{C}(\bar\Omega,
\R^{n\times n})$ regular. Let  $\xi\in W^{1,1}_{loc} (\Omega,\mathbb{R}^n)$  satisfy  
\eqref{trzy} for a.e. $x\in\Omega$. Then $\xi \in W^{1,\infty}(\Omega,
\R^n)$ must be Lipschitz continuous. 

(ii) Assume additionally that for some $k\ge 0$ and $0<\mu<1$,   
$G, \tilde G \in \mathcal{C}^{k,\mu}(\Omega, \mathbb{R}^{n\times
  n})$.  If  (\ref{trzy}), (\ref{orienta}) hold a.e. in $\Omega$ 
(so that $E(\xi)=0$), then $\xi \in \mathcal{C}^{k+1, \mu} (\Omega, \R^n)$.
\end{proposition}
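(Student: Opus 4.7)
The plan is to handle part (i) directly from the coercivity inequality \eqref{length}, and part (ii) through the derivation of an ODE for the Hessian of $\xi$ followed by a bootstrap argument.

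For part (i), I would apply \eqref{length} to $F = \nabla\xi(x)$:
\[|\nabla\xi(x)|^2 \le C\,\mathrm{trace}\bigl((\nabla\xi)^T G\, \nabla\xi\bigr)(x) = C\,\mathrm{trace}\bigl(\tilde G(x)\bigr) \le C',\]
where the final bound comes from the continuity of $\tilde G$ on the compact $\bar\Omega$. Since $\xi \in W^{1,1}_{loc}$, this pointwise $L^\infty$ control of $\nabla\xi$ upgrades $\xi$ to $W^{1,\infty}(\Omega,\mathbb{R}^n)$, which is the asserted Lipschitz continuity.

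For part (ii), I would derive the key ODE by differentiating the identity $(\nabla\xi)^T G (\nabla\xi) = \tilde G$ in $x_l$ and taking the three cyclic permutations of $(i,j,l)$---the classical Christoffel-symbol computation---using $\partial^2_{ij}\xi = \partial^2_{ji}\xi$ to solve the resulting linear algebraic system for the Hessian pointwise. Invertibility is ensured by $\det\nabla\xi > 0$ and uniform positive definiteness of $G$, yielding
\[\partial^2_{ij}\xi^a(x) = \Phi^a_{ij}\bigl(x,\nabla\xi(x)\bigr),\]
where $\Phi$ is rational in $\nabla\xi$ and polynomial in the entries of $G, G^{-1}, \tilde G, \nabla G, \nabla\tilde G$ at $x$. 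For $k\ge 1$, the remaining bootstrap is routine: the right hand side lies in $L^\infty$ since $\nabla G, \nabla\tilde G\in\mathcal{C}^{k-1,\mu}$ and $\nabla\xi\in L^\infty$ by part (i); this upgrades $\nabla\xi$ to $\mathcal{C}^{0,1}$, and each successive substitution into $\Phi$ gains one Hölder derivative until $\xi\in\mathcal{C}^{k+1,\mu}$.

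The main obstacle is the base case $k=0$, where $\nabla G$ and $\nabla\tilde G$ are only distributions so the ODE is not classically available. For this case I would exploit the polar representation \eqref{equ-rotation}, $\nabla\xi = G^{-1/2}R\,\tilde G^{1/2}$ with $R(x)\in SO(n)$ a.e.\ Since the matrix square root preserves Hölder regularity on uniformly positive definite tensors, $G^{\pm 1/2}, \tilde G^{\pm 1/2}\in\mathcal{C}^{0,\mu}$, reducing the question to showing $R\in\mathcal{C}^{0,\mu}$. A natural approach is to apply the difference-quotient version of the Christoffel computation to $F^T G F = \tilde G$ directly---extracting symmetric-part estimates of $F(x)-F(y)$ modulo $O(|x-y|^\mu)$---and to close the system using the curl-free constraint for $F=\nabla\xi$ (interpreted distributionally) together with the pointwise rigidity $R(x)\in SO(n)$.
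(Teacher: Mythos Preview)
Your argument for part (i) is correct and matches the paper's proof exactly.

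For part (ii) your route differs substantially from the paper's, and as written it has a genuine gap. The paper does not use the Christoffel ODE here. Instead it observes that (\ref{trzy}) together with (\ref{orienta}) give $\det\nabla\xi = (\det\tilde G/\det G)^{1/2} =: a$ and hence $\mathrm{cof}\,\nabla\xi = a\,G(\nabla\xi)\tilde G^{-1}$. Since the Piola identity $\mathrm{div}(\mathrm{cof}\,\nabla\xi)=0$ holds weakly for every $\xi\in W^{1,\infty}$, one obtains the \emph{linear} divergence-form elliptic system
\[
\mathrm{div}\bigl(a\,G\,(\nabla\xi)\,\tilde G^{-1}\bigr)=0,
\]
whose coefficient tensor $A^{\alpha\beta}_{ij}=a\,G_{ij}\tilde G^{\alpha\beta}$ lies in $\mathcal{C}^{k,\mu}$. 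Standard elliptic regularity (the paper cites \cite{gia}) then gives $\xi\in\mathcal{C}^{k+1,\mu}$ for every $k\ge 0$ at once---no separate treatment of the base case is needed.

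Your derivation of $\partial^2_{ij}\xi^a = \Phi^a_{ij}(x,\nabla\xi)$, by contrast, presupposes that $\nabla^2\xi$ already exists as a function: the Christoffel manipulation relies on the pointwise identity $\partial_l(\partial_i\xi)=\partial_i(\partial_l\xi)$, which is unavailable from $\xi\in W^{1,\infty}$ alone. (The paper's own derivation of this ODE, Lemma~\ref{lem1}, explicitly hypothesizes $\xi\in W^{2,2}$ for exactly this reason.) Replacing derivatives by difference quotients does not close the argument either: the metric equation controls only symmetric combinations of the form $(\nabla\xi(x)-\nabla\xi(y))^T G\,\nabla\xi$, while the curl-free constraint is a distributional identity between \emph{distinct} partial derivatives, not a two-point estimate you can pair with a single directional difference quotient. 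Your $k=0$ sketch inherits precisely this difficulty and remains incomplete. The Piola/cofactor device is what supplies the missing initial regularity without circularity.
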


\begin{proof}
The first assertion clearly follows from the boundedness of $\tilde G$ and
positive definiteness of $G$, through \eqref{length}.  
To prove (ii), recall that for a matrix $F\in\mathbb{R}^{n\times n}$,
the matrix of cofactors of $F$ is $\mbox{cof } F$, with 
$(\mbox{cof } F)_{ij} = (-1)^{i+j} \det \hat F_{ij}$, where 
$\hat F_{ij}\in\mathbb{R}^{(n-1)\times (n-1)}$ is obtained from $F$ by deleting 
its $i$th row and $j$th column. Then,  \eqref{trzy} implies that: 
\begin{equation*} 
\det \nabla \xi = \Big  (\frac{\det \tilde G}{\det G} \Big )^{1/2} =:
a \in \mathcal{C}(\bar \Omega, \R^+) \quad \mbox{ and } \quad
\mbox{cof} \nabla \xi = a   G (\nabla \xi) \tilde G^{-1}.
\end{equation*}
Since $\mbox{div}(\mbox{cof } \nabla \xi) = 0$ for $\xi\in W^{1,\infty}$ (where the divergence 
of the cofactor matrix is always taken row-wise), we obtain that $\xi$
satisfies the following linear system of differential equations, in the weak sense: 
\begin{equation*}
{\rm div} \big ( a G (\nabla \xi) \tilde G^{-1}\big ) =0.
\end{equation*} 
Writing in coordinates $\xi=(\xi^1\ldots \xi^n)$, and using the
Einstein summation convention, the above system reads:
\begin{equation*}
\forall i=1\ldots n \qquad \partial_\alpha  \big ( a G_{ij} \tilde
G^{\beta\alpha}  \partial_\beta \xi^j  \big ) =0.
\end{equation*} 
The regularity result is now an immediate consequence of
\cite[Theorem 3.3]{gia} in view of the ellipticity of the coefficient matrix $A^{\alpha\beta}_{ij}= a G_{ij} \tilde
G^{\alpha\beta}$.  
 \end{proof}

We now prove two further auxiliary results.

\begin{lemma}\label{lem2.3}
There exist constants $C, M>0$, depending only on $\|G\|_{L^\infty}$ and $\|\tilde G\|_{L^\infty}$, such that
for every $\xi \in W^{1,2}(\Omega,\mathbb{R}^n)$ there exists $\bar
\xi\in W^{1,2}(\Omega,\mathbb{R}^n)$ with the properties: 
$$ \|\nabla\bar \xi \|_{L^\infty} \leq M, \qquad  
\|\nabla \xi - \nabla\bar \xi \|_{L^2(\Omega)}^2\leq C E(\xi)
\qquad \mathrm{and} \qquad E(\bar \xi)\leq C E(\xi).$$
\end{lemma}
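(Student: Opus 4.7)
The plan is to use a Lipschitz truncation argument in the spirit of Friesecke–James–Müller \cite{FJMgeo}. The intuition is that the integrand in (\ref{energy}) grows quadratically in $|\nabla \xi|$ once $|\nabla \xi|$ exceeds a universal threshold, so the set on which $\nabla \xi$ is large must be both small and highly penalized by $E(\xi)$; this is precisely the regime where a single truncation at a fixed level $\lambda$ will work.

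First, I would quantify the coercivity. Combining the inequality (\ref{coerce}) with the elementary lower bound $\mathrm{dist}(A,SO(n))\ge |A|-\sqrt{n}$, I would choose a threshold $\lambda_0>0$, depending only on $n$ and on $\|G\|_{L^\infty}$, $\|\tilde G\|_{L^\infty}$ (and their uniform positive definiteness constants), such that
\begin{equation*}
\mathrm{dist}^2\bigl(G^{1/2}F\tilde G^{-1/2},SO(n)\bigr)\ge c\,|F|^2
\qquad \text{for every } F\in\R^{n\times n} \text{ with } |F|\ge \lambda_0.
\end{equation*}
In particular $\int_{\{|\nabla \xi|>\lambda\}}|\nabla \xi|^2\,\mathrm{d}x\le C\,E(\xi)$ for every $\lambda\ge \lambda_0$.

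Second, I would invoke the standard Lipschitz truncation lemma (see e.g.\ \cite{FJMgeo}, Proposition~A.1, or Acerbi–Fusco/Evans–Gariepy): for each $\lambda>0$ there exists $\bar \xi\in W^{1,\infty}(\Omega,\R^n)$ with $\|\nabla\bar \xi\|_{L^\infty}\le C\lambda$, satisfying $\bar \xi=\xi$ and $\nabla \bar \xi=\nabla \xi$ almost everywhere on $\Omega\setminus E_\lambda$, where $E_\lambda:=\{x\in\Omega : \mathcal{M}(|\nabla \xi|)(x)>\lambda\}$. The weak $(2,2)$-bound for the maximal operator $\mathcal{M}$ applied to $|\nabla \xi|\,\chi_{\{|\nabla \xi|>\lambda/2\}}$ gives
\begin{equation*}
|E_\lambda|\le \frac{C}{\lambda^2}\int_{\{|\nabla \xi|>\lambda/2\}}|\nabla \xi|^2\,\mathrm{d}x.
\end{equation*}
I would fix $\lambda:=2\lambda_0$ and set $M:=C\lambda$.

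Third, I would combine the two. Splitting $E_\lambda$ into the pieces $\{|\nabla \xi|>\lambda/2\}$ and its complement and using the above bound on $|E_\lambda|$,
\begin{equation*}
\int_{E_\lambda}|\nabla \xi|^2\,\mathrm{d}x
\le \int_{\{|\nabla \xi|>\lambda/2\}}|\nabla \xi|^2\,\mathrm{d}x+\frac{\lambda^2}{4}|E_\lambda|
\le C\int_{\{|\nabla \xi|>\lambda/2\}}|\nabla \xi|^2\,\mathrm{d}x\le C\,E(\xi),
\end{equation*}
where the last step uses the coercivity of Step~1 (recall $\lambda/2=\lambda_0$). Then
\begin{equation*}
\|\nabla \xi-\nabla\bar \xi\|_{L^2}^2
=\int_{E_\lambda}|\nabla \xi-\nabla \bar \xi|^2\,\mathrm{d}x
\le 2\int_{E_\lambda}|\nabla \xi|^2+2M^2|E_\lambda|\le C\,E(\xi).
\end{equation*}
For the last estimate, $E(\bar \xi)$ decomposes as the sum over $\Omega\setminus E_\lambda$, where the integrand equals that of $\xi$, and over $E_\lambda$, where $|G^{1/2}\nabla \bar \xi\tilde G^{-1/2}|$ is bounded by a constant depending on $M$, $\|G\|_{L^\infty}$, $\|\tilde G\|_{L^\infty}$; hence this second contribution is controlled by $C|E_\lambda|\le C\,E(\xi)$.

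The main technical content is the invocation of the Lipschitz truncation lemma; everything else is bookkeeping. I expect the only minor subtlety to be choosing $\lambda$ so that the factor $\lambda/2$ appearing in the weak-type estimate for $\mathcal M$ still lies above the coercivity threshold $\lambda_0$, which is why taking $\lambda=2\lambda_0$ suffices.
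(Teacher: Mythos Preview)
Your proposal is correct and follows essentially the same route as the paper: both invoke the Lipschitz truncation of \cite[Proposition~A.1]{FJMgeo} at a fixed level $\lambda$ chosen so that $|F|\ge\lambda$ forces $\mathrm{dist}^2(G^{1/2}F\tilde G^{-1/2},SO(n))\ge c|F|^2$, and then read off all three estimates from the truncation bounds. You spell out the maximal-function mechanics behind Proposition~A.1 more explicitly than the paper does, but the argument is the same.
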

\begin{proof}
Use the approximation  result of Proposition A.1. in \cite{FJMgeo} to
obtain the truncation $\bar \xi = \xi^\lambda$, for $\lambda>0$ 
having the property that if a matrix $F\in \mathbb{R}^{n\times n}$
satisfies $|F|\geq\lambda$ then:
\begin{equation*} 
|F|^2\leq C \mbox{dist}^2( G ^{1/2} F\tilde G^{-1/2} (x), SO(n)) \qquad \forall x\in \Omega.
\end{equation*} 
Then $\|\nabla \xi^\lambda\|_{L^\infty}\leq C\lambda:= M$ and further,
since $\nabla\xi = \nabla\xi^\lambda$ a.e. in the set $\{|\nabla\xi|\leq\lambda \}$:
$$\|\nabla \xi - \nabla \xi^\lambda\|_{L^2(\Omega)}^2 = \int_{\{|\nabla \xi|>\lambda\}}
|\nabla \xi|^2 \leq c \int_{\{|\nabla \xi|>\lambda\}} 
\mbox{dist}^2( G ^{1/2} \nabla \xi \tilde G^{-1/2}, SO(n))~\mbox{d}x \leq C E(\xi).$$
The last inequality of the lemma follows from the above by the triangle inequality.
\end{proof}

\begin{lemma}\label{lem2.4}
Let $\xi \in W^{1,\infty}(\Omega,\mathbb{R}^n)$. Then there exists a
unique weak solution $\phi:  \Omega\to \R^n$ to:    
\begin{equation}\label{EL-multiple}  
\left\{\begin{split}
&  {\rm div} \big ( a G (\nabla \phi) \tilde G^{-1}\big )
=0 \quad \mbox{ in } \Omega, \\
& \phi = \xi \quad \mbox{ on } \partial\Omega.
\end{split}\right.
\end{equation} 
Moreover, there is constant $C>0$,  depending only on $G$ and $\tilde G$, and  (in a nondecreasing
manner) on $\|\nabla \xi\|_{L^\infty}$, such that:
$$\|\nabla (\xi-\phi)\|_{L^2(\Omega)}^2\leq C E(\xi).$$
\end{lemma}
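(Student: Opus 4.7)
I would first establish the existence and uniqueness of $\phi$ by the Lax--Milgram theorem. The relevant symmetric bilinear form on $W^{1,2}_0(\Omega,\mathbb{R}^n)$ is
\[B(\phi,\psi):=\int_\Omega a\,\mbox{tr}\bigl((\nabla\phi)^T G(\nabla\psi)\tilde G^{-1}\bigr)\,\mbox{d}x,\]
and it is both continuous (from $a,G,\tilde G\in L^\infty(\bar\Omega)$) and coercive: the uniform positive definiteness of $G$ and of $\tilde G^{-1}$ gives $B(w,w)\ge c\|\nabla w\|_{L^2}^2$ for every $w\in W^{1,2}_0$, in the spirit of (\ref{length}). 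Posing the equivalent problem on the affine space $\xi+W^{1,2}_0(\Omega,\mathbb{R}^n)$ then produces a unique weak solution $\phi$ of (\ref{EL-multiple}).

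For the $L^2$-gradient estimate, I would set $v:=\xi-\phi\in W^{1,2}_0(\Omega,\mathbb{R}^n)$ and test the weak form of (\ref{EL-multiple}) against $v$ itself, obtaining $B(\phi,v)=0$ and hence $B(v,v)=B(\xi,v)$. By the coercivity of $B$ it then suffices to bound $B(\xi,v)$ by $C\sqrt{E(\xi)}\,\|\nabla v\|_{L^2}$. The idea is to subtract from the integrand $aG(\nabla\xi)\tilde G^{-1}$ a divergence-free matrix field which is $L^2$-close to it, so that its action against $\nabla v$ becomes small. The correct divergence-free proxy is $\mbox{cof}\,\nabla\xi$, which is motivated by the computation already performed in the proof of Proposition \ref{regularity}: the two tensors $aG(\nabla\xi)\tilde G^{-1}$ and $\mbox{cof}\,\nabla\xi$ coincide identically at any point where (\ref{trzy}) holds. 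Moreover, $\mbox{cof}\,\nabla\xi\in L^\infty$ since $\xi\in W^{1,\infty}$, and the classical Piola identity yields $\mbox{div}(\mbox{cof}\,\nabla\xi)=0$ in $\mathcal{D}'(\Omega)$. Consequently,
\[B(\xi,v)=\int_\Omega\bigl(aG(\nabla\xi)\tilde G^{-1}-\mbox{cof}\,\nabla\xi\bigr):\nabla v\,\mbox{d}x.\]

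The main technical hurdle is the pointwise comparison
\[\bigl|aG(\nabla\xi)(x)\tilde G^{-1}-\mbox{cof}\,\nabla\xi(x)\bigr|\le C\,\mbox{dist}\bigl(G^{1/2}(\nabla\xi)(x)\tilde G^{-1/2},\,SO(n)\bigr),\]
with $C$ depending on $G,\tilde G$ and monotonically on $\|\nabla\xi\|_{L^\infty}$. To obtain it I would first verify, using the multiplicative formula $\mbox{cof}(ABC)=(\det A)(\det C)A^{-T}\mbox{cof}(B)C^{-T}$ applied with $A=G^{-1/2}$, $B=F:=G^{1/2}(\nabla\xi)\tilde G^{-1/2}$, $C=\tilde G^{1/2}$, the algebraic identity
\[aG(\nabla\xi)\tilde G^{-1}-\mbox{cof}\,\nabla\xi=aG^{1/2}\Phi(F)\tilde G^{-1/2}\qquad\mbox{with}\qquad\Phi(M):=M-\mbox{cof}(M).\]
Since $\mbox{cof}(R)=(\det R)R^{-T}=R$ for every $R\in SO(n)$, the polynomial map $\Phi$ vanishes on the compact manifold $SO(n)$, and its smoothness combined with the pointwise bound on $|F(x)|$ (coming from $\|\nabla\xi\|_{L^\infty}$ and the uniform bounds on $G,\tilde G$) forces the Lipschitz-type control $|\Phi(F(x))|\le C\,\mbox{dist}(F(x),SO(n))$. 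Squaring, integrating, and then combining with Cauchy--Schwarz and the coercivity of $B$ yields
\[c\|\nabla v\|_{L^2}^2\le B(v,v)=B(\xi,v)\le C\sqrt{E(\xi)}\,\|\nabla v\|_{L^2},\]
from which the stated estimate $\|\nabla(\xi-\phi)\|_{L^2}^2\le C'E(\xi)$ follows. The essential ingredient is the identification of $\mbox{cof}\,\nabla\xi$ as the correct divergence-free companion of $aG(\nabla\xi)\tilde G^{-1}$, together with the smooth vanishing of $\Phi$ on $SO(n)$ that converts the algebraic coincidence on solutions into the desired quantitative bound.
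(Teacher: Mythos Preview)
Your proof is correct and follows essentially the same route as the paper: existence via coercivity (the paper uses the direct method rather than Lax--Milgram, but this is cosmetic), then testing against $v=\xi-\phi$, subtracting the divergence-free $\mathrm{cof}\,\nabla\xi$, and bounding $|aG(\nabla\xi)\tilde G^{-1}-\mathrm{cof}\,\nabla\xi|$ pointwise by the distance to $SO(n)$ via the vanishing of this difference when $G^{1/2}(\nabla\xi)\tilde G^{-1/2}\in SO(n)$. Your explicit factorization through $\Phi(M)=M-\mathrm{cof}\,M$ is a nice sharpening of what the paper leaves as a one-line remark.
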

\begin{proof}
Consider the functional:
\begin{equation*}
I(\varphi):= \int_\Omega \langle G(\nabla\varphi)\tilde G^{-1}(x) :
\nabla\varphi(x)\rangle~\mbox{d}x = \int_\Omega |a^{1/2} G^{1/2} (\nabla \varphi) \tilde
G^{-1/2}|^2~\mbox{d}x \qquad \forall \varphi \in W^{1,2}(\Omega, \R^n).    
\end{equation*}
The formula \eqref{coerce}, in which we have implicitly used the  coercivity of $G$ and $\tilde G$,  implies that:  
\begin{equation*} 
\|\nabla \varphi\|^2_{L^2(\Omega)} \le C I(\varphi).
\end{equation*} 
Therefore, in view of the strict convexity of $I$, the direct method
of calculus of variations implies that  $I$ admits a  unique critical
point $\phi$ in the set: 
\begin{equation*}
\big\{\varphi \in W^{1,2}(\Omega, \R^n); \,\, \varphi = \xi \,\, \mbox{on} \,\, \partial \Omega \big\}. 
\end{equation*} 
By the symmetry of $G$ and $\tilde G$, \eqref{EL-multiple} is
precisely the Euler-Lagrange equation of $I$, 
and hence it is satisfied, in the weak sense, by $\phi$. 
 
Further, for the correction $\psi=\xi-\phi \in W^{1,2}_0(\Omega,\mathbb{R}^n)$ it follows that:
\begin{equation*}
\begin{split} 
\forall \eta\in W^{1,2}_0(\Omega, \mathbb{R}^n) \qquad
\int_\Omega a G (\nabla \psi)  \tilde G^{-1} : \nabla \eta ~\mbox{d}x 
& =  \int_\Omega a G (\nabla \xi)  \tilde G^{-1} : \nabla \eta
- \int_\Omega a G (\nabla \phi)  \tilde G^{-1} : \nabla \eta \\   
& = \int_\Omega a G (\nabla \xi)  \tilde G^{-1} : \nabla \eta \\ & 
=  \int_\Omega a G (\nabla \xi)  \tilde G^{-1} : \nabla \eta
- \int_\Omega  {\rm cof} \nabla \xi : \nabla \eta.
\end{split} 
\end{equation*} 
Indeed, the last term above equals to $0$, since the row-wise divergence of the cofactor
matrix of $\nabla \xi$ is $0$, in view of $\xi$ being Lipschitz continuous.
Use now $\eta=\psi$ to obtain:
\begin{equation*}\label{glupie3}
\begin{split}
\|\nabla \psi\|^2_{L^2(\Omega)}  & \le  C I(\psi)  = 
 C \int_\Omega (a G (\nabla \xi)  \tilde G^{-1}  - {\rm cof} \nabla \xi) : \nabla \psi  ~\mbox{d}x 
\\ & \le C \|\nabla \psi \|_{L^2(\Omega)} \left(\int_{\Omega}\left | a
    G (\nabla \xi)  \tilde G^{-1}  - {\rm cof} \nabla \xi \right
  |^2\right)^{1/2} \\ &\leq C\|\nabla \psi \|_{L^2(\Omega)}  E(\xi)^{1/2}.
\end{split}
\end{equation*}
The last inequality above follows from:
\begin{equation*}
\forall |F|\leq M \quad \forall x\in\Omega \qquad  |aGF\tilde G^{-1}
(x) - {\rm cof} \,F|^2 \le C_M {\rm dist}^2 \big(G^{1/2}F\tilde G^{-1/2}, SO(n)\big),   
\end{equation*} 
because when $G^{1/2}F\tilde G^{-1/2}\in SO(n)$ then the difference in
the left hand side above equals $0$.  
\end{proof}

\begin{theorem}\label{energy-condition}
Assume that the metrics $G, \tilde G \in \mathcal{C}(\bar \Omega,
\R^{n\times n})$ are Lipschitz continuous. Define: 
\begin{equation}\label{defect}
\kappa(G, \tilde G)= \inf_{\xi \in W^{1,2}(\Omega, \R^n)} E(\xi).  
\end{equation} 
Then,  $\kappa(G, \tilde G)=0$ if and only if there exists a
minimizer $\xi \in W^{1,2}(\Omega, \R^n)$ with $E(\xi)=0$.
In particular,  in view of Proposition \ref{regularity}, this is equivalent to $\xi$ being a solution to
(\ref{trzy}) (\ref{orienta}), and  $\xi$ is smooth if  $G$ and $\tilde G$ are smooth.   
\end{theorem}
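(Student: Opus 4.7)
One direction is obvious: if a minimizer $\xi$ with $E(\xi)=0$ exists, then $\kappa(G,\tilde G)=0$. For the converse, the difficulty is that $E$ is not convex in $\nabla\xi$—the zero set of the integrand is the nonconvex manifold $\{F : G^{1/2}F\tilde G^{-1/2}\in SO(n)\}$—so one cannot simply extract a weakly convergent subsequence from a minimizing sequence and appeal to lower semicontinuity. My plan is to use Lemmas \ref{lem2.3} and \ref{lem2.4} in tandem to upgrade a minimizing sequence to a strongly $W^{1,2}$-convergent one, whose limit then attains $E=0$ by dominated convergence.

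First I would take $\{\xi_k\}\subset W^{1,2}(\Omega,\R^n)$ with $E(\xi_k)\to 0$ and apply Lemma \ref{lem2.3} to replace it by truncations $\{\bar\xi_k\}$ with uniform Lipschitz bound $\|\nabla\bar\xi_k\|_{L^\infty}\le M$ and $E(\bar\xi_k)\to 0$. Translating each $\bar\xi_k$ by a constant so that $\bar\xi_k(x_0)=0$ for some fixed $x_0\in\bar\Omega$ (which leaves $E$ unchanged) yields a uniform bound in $W^{1,\infty}(\Omega,\R^n)$. Arzelà–Ascoli combined with weak-$\ast$ compactness in $L^\infty$ then produce, along a subsequence, uniform convergence $\bar\xi_k\to\xi$ to some $\xi\in W^{1,\infty}(\Omega,\R^n)$, with $\nabla\bar\xi_k \rightharpoonup^\ast \nabla\xi$ in $L^\infty$.

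The key step is to upgrade this to strong $W^{1,2}$-convergence. I would apply Lemma \ref{lem2.4} to each $\bar\xi_k$ to produce the elliptic correction $\phi_k$ agreeing with $\bar\xi_k$ on $\partial\Omega$ and solving \eqref{EL-multiple}, satisfying $\|\nabla(\bar\xi_k-\phi_k)\|_{L^2}^2\le C E(\bar\xi_k)\to 0$, where $C$ is uniform thanks to the bound $M$. The main obstacle will be to promote the weak $W^{1,2}$-convergence of $\phi_k$ to strong convergence, since only then can one pass to the limit in the nonconvex integrand defining $E$. I would resolve this through trace compactness: the boundary data $\bar\xi_k|_{\partial\Omega}$ are uniformly bounded in $W^{1,\infty}(\partial\Omega)\subset H^1(\partial\Omega)$ and converge uniformly to $\xi|_{\partial\Omega}$, so by the compact embedding $H^1(\partial\Omega)\hookrightarrow H^{1/2}(\partial\Omega)$ they converge strongly in $H^{1/2}(\partial\Omega)$ along a subsequence. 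Continuous dependence of the unique solution of the linear, uniformly elliptic problem \eqref{EL-multiple} on its $H^{1/2}$-boundary data then delivers $\phi_k\to\phi^\ast$ strongly in $W^{1,2}$, where $\phi^\ast$ is the elliptic solution with boundary trace $\xi|_{\partial\Omega}$; combining with $\bar\xi_k-\phi_k\to 0$ in $W^{1,2}$ forces $\phi^\ast=\xi$, and hence $\bar\xi_k\to\xi$ strongly in $W^{1,2}$.

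To finish, I pass to one further subsequence yielding $\nabla\bar\xi_k\to\nabla\xi$ pointwise a.e.; the uniform Lipschitz bound then permits dominated convergence in the integrand of $E$, giving $E(\xi)=\lim_k E(\bar\xi_k)=0$, so $\xi$ is the sought minimizer. The remaining claims follow automatically: $E(\xi)=0$ means $G^{1/2}\nabla\xi\tilde G^{-1/2}\in SO(n)$ a.e., which by the polar decomposition \eqref{equ-rotation} is equivalent to \eqref{trzy} (and forces $\det\nabla\xi>0$, so \eqref{orienta} holds), and the smoothness statement is the content of Proposition \ref{regularity} applied to the solution thus obtained.
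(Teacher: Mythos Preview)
Your proof is correct and follows the same overall architecture as the paper's: truncate via Lemma~\ref{lem2.3}, decompose each term as $\bar\xi_k=\phi_k+\psi_k$ via the elliptic correction of Lemma~\ref{lem2.4}, show $\psi_k\to 0$ strongly, and upgrade the convergence of $\phi_k$ from weak to strong in order to pass to the limit in the nonconvex integrand.

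The one substantive difference lies in how you obtain strong convergence of the elliptic part $\phi_k$. The paper invokes \emph{interior} $W^{2,2}$ regularity for the linear system \eqref{EL-multiple} (using the Lipschitz regularity of $G,\tilde G$) to get $\|\phi_k\|_{W^{2,2}(\Omega')}\le C$ on each $\Omega'\subset\subset\Omega$, hence strong $W^{1,2}_{\mathrm{loc}}$ convergence; the conclusion $E(\xi)=0$ then follows by exhausting $\Omega$. You instead exploit compactness of the boundary traces (uniformly Lipschitz, uniformly convergent, hence strongly convergent in $H^{1/2}(\partial\Omega)$) together with the bounded linear dependence of the solution of \eqref{EL-multiple} on its Dirichlet data, yielding global strong $W^{1,2}$ convergence of $\phi_k$. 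Your route is arguably cleaner—it avoids the interior $W^{2,2}$ estimate and delivers convergence on all of $\Omega$ at once—while the paper's route makes the role of the Lipschitz hypothesis on the coefficients more explicit.
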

\begin{proof} 
Assume, by contradiction, that for some sequence of deformations
$\xi_k\in W^{1,2}(\Omega, \mathbb{R}^n)$, there holds $\lim_{k\to\infty} E(\xi_k) = 0$.
By Lemma \ref{lem2.3}, replacing $\xi_k$ by $\bar \xi_k$, 
we may without loss of generality request that $\|\nabla \xi_k\|_{L^\infty} \leq M$.

The uniform boundedness of $\nabla \xi_k$ implies, 
via the Poincar\'{e} inequality, and after a 
modification by a constant and passing to a subsequence, if necessary:
\begin{equation}\label{glupie2.5}
\lim _{k\to\infty}\xi_k = \xi \qquad \mbox{ weakly in } W^{1,2} (\Omega).
\end{equation}
Consider the decomposition $\xi_k=\phi_k+ \psi_k$, where $\phi_k$
solves (\ref{EL-multiple}) with the boundary data $\phi_k = \xi_k$ on $\partial\Omega$.
By the Poincar\'e inequality, Lemma \ref{lem2.4} implies 
for the sequence $\psi_k\in W_0^{1,2}(\Omega)$:
\begin{equation*}
\lim_{k\to\infty} \psi_k = 0 \qquad \mbox{ strongly in } W^{1,2}(\Omega).
\end{equation*}
In view of the convergence in (\ref{glupie2.5}),  the sequence $\phi_k$ must
be uniformly bounded in $W^{1,2}(\Omega)$, and hence by \cite[Theorem 4.11, estimate (4.18)]{yan}:
\begin{equation*} 
\forall \Omega '\subset\subset  \Omega \quad \exists C_{\Omega'}
\quad \forall k \qquad \|\phi_k\|_{W^{2,2}(\Omega')} \leq C_{\Omega'} 
\|\phi_k\|_{W^{1,2}(\Omega)}\leq C.
\end{equation*} 
Consequently, $\phi_k$ converge to $\phi$ strongly in $W_{loc}^{1,2}(\Omega)$.
Recalling that $E(\xi_k)$ converge to $0$, we finally conclude that:
$$ E(\xi) = 0. $$
This proves the claimed result.
\end{proof}

\section{A dimension reduction result for the energies (\ref{energy})}\label{dimred}

The variational problem induced by (\ref{energy})  is difficult due to the lack of convexity.  One
way of reducing the complexity of this problem is to assume that the
target shapes are thin bodies, described by a ``thin
limit'' residual theory, which is potentially easier to analyze. 
We concentrate on the case $n=3$ and the energy functional 
(\ref{energy}) relative to a family of thin films $\Omega^h =
\omega\times (-\frac{h}{2}, \frac{h}{2})$ with the midplate
$\omega\subset\mathbb{R}^2$ given by an open, smooth and bounded set.
This set-up corresponds to a scenario where a target $3$-dimensional thin
shell is to be manufactured, rather than a $2$-dimensional surface as
in Figure \ref{fig:geometry}. 
As we shall see below, an approximate realization of the ideal thin
shell is delivered by solving the variational problem in the limit of the
vanishing thickness $h$. 

Assume further that the given smooth metrics $G, \tilde G:\bar\Omega^h\to
\mathbb{R}^{3\times 3}$ are thickness-independent:
$$G,\tilde G (x', x_3) = G,\tilde G(x') \qquad \forall x=(x', x_3)\in\omega\times (-\frac{h}{2}, \frac{h}{2}),$$
and denote:
\begin{equation}\label{energyh}
E^h(\xi^h) = \frac{1}{h}\int_{\Omega^h} W\big(G^{1/2}(\nabla\xi^h)\tilde
G^{-1/2}\big)~\mbox{d}x \qquad \forall \xi^h\in W^{1,2}(\Omega^h,\mathbb{R}^3).
\end{equation} 
The energy density $W:\mathbb{R}^{3\times
  3}\to{\bar{\mathbb{R}}}_+$ is assumed to be $\mathcal{C}^2$ regular close to
$SO(3)$, and to satisfy the conditions of normalisation, frame
invariance and bound from below:
\begin{equation*}
\begin{split}
\exists c>0 \quad \forall F\in\mathbb{R}^{3\times 3}\quad \forall R\in SO(3)
\qquad & W(R) = 0, \quad W(RF) = W(F), \\ & W(F)\geq c~\mbox{dist}^2(F, SO(3)).
\end{split}
\end{equation*}

Following the approach of \cite{FJMgeo}, which has been further developed in
\cite{FJMhier, lepa, BLS, Raoult} (see also \cite{Raoult} for an extensive review of the literature),  
we obtain  the following $\Gamma$-convergence results, describing in a
rigorous manner the asymptotic behavior of the 
approximate minimizers of the energy (\ref{energyh}).

\begin{theorem}\label{thm2}
For a given sequence of deformations $\xi^h\in W^{1,2}(\Omega^h,\mathbb{R}^3)$ satisfying:
\begin{equation}\label{en_bound} 
\exists C>0 \quad \forall h \qquad E^h(\xi^h) \leq Ch^2,
\end{equation}
there exists a sequence of vectors $c^h\in\mathbb{R}^3$, such that the following properties hold for the
normalised deformations $y^h\in W^{1,2}(\Omega^1,\mathbb{R}^3)$:
$$y^h(x', x_3) = \xi^h(x', hx_3) - c^h.$$
\begin{itemize}
\item[(i)] There exists $y\in W^{2,2}(\omega,\mathbb{R}^3)$ such that,
  up to a subsequence:
\begin{equation*}
y^h\to y \qquad \mbox{ strongly in } W^{1,2}(\Omega^1,\mathbb{R}^3).
\end{equation*}
The deformation $y$ realizes the compatibility of metrics $G$ and
$\tilde G$ on the midplate $\omega$: 
\begin{equation}\label{cinque}
(\nabla y)^TG~\nabla y = \tilde G_{2\times 2}.
\end{equation}
The unit normal $\vec N$ to the surface $y(\omega)$ and
the metric $G$-induced normal $\vec M$ below have the regularity $ \vec
N, \vec M\in W^{1,2}\cap L^\infty (\omega,\mathbb{R}^3)$:
\begin{equation*}
{ \vec N = \frac{\partial_1 y \times\partial_2y }{|\partial_1 y
    \times\partial_2y|}} \qquad  \vec M = \frac{\sqrt{\det
    G}}{\sqrt{\det \tilde G_{2\times 2}}} ~ G^{-1} (\partial_1y\times \partial_2 y),
\end{equation*}
where we observe that $\langle \partial_iy, G\vec M\rangle = 0$ for
$i=1,2$ and $\langle \vec M, G\vec M\rangle = 1$.

\item[(ii)] Up to a subsequence, we have the convergence:
\begin{equation*}
\frac{1}{h}\partial_3y^h\to \vec b \qquad \mbox{ strongly in } L^{2}(\Omega^1,\mathbb{R}^3),
\end{equation*}
where the Cosserat vector $\vec b \in W^{1,2}\cap L^\infty (\omega,\mathbb{R}^3)$ is given by:
\begin{equation}\label{b}
\vec b = (\nabla y) (\tilde G_{2\times
  2})^{-1}\left[\begin{array}{c}\tilde G_{13}\\\tilde G_{23}\end{array}\right]
+ \frac{\sqrt{\det \tilde G}}{\sqrt{\det \tilde G_{2\times 2}}}  \vec M.
\end{equation}

\item[(iii)] Define the quadratic forms:
\begin{equation*}
\begin{split}
 \mathcal{Q}_3(F) = D^2 W(\mathrm{Id})(F,F),\\
 \mathcal{Q}_2(x', F_{2\times 2}) = \min\left\{
   \mathcal{Q}_3\Big(\tilde G(x)^{-1/2}\tilde F \tilde G(x')^{-1/2}\Big);
  ~ \tilde F\in\mathbb{R}^{3\times 3} \mbox{ with }\tilde F_{2\times 2} = F_{2\times 2}\right\}.
\end{split}
\end{equation*}
The form $\mathcal{Q}_3$ is defined for  all $F\in\mathbb{R}^{3\times 3}$, 
while $\mathcal{Q}_2(x', \cdot)$ are defined on $F_{2\times
  2}\in\mathbb{R}^{2\times 2}$. Both forms $\mathcal{Q}_3$ and all $\mathcal{Q}_2$ are nonnegative
definite and depend only on the  symmetric parts of their arguments.
In particular, when the energy density $W$ is isotropic, i.e.:
$$\forall F\in\mathbb{R}^{3\times 3}\quad\forall R\in SO(3)\qquad W(RF) = W(F),$$
then $\mathcal{Q}_2(x',\cdot)$ is given in terms of the Lam\'e
coefficients $\lambda, \mu >0$ by:
\begin{equation*}
\mathcal{Q}_2(x', F_{2\times 2}) = \mu\left| (\tilde G_{2\times
    2})^{-1/2} F_{2\times 2} (\tilde G_{2\times 2})^{-1/2}\right|^2 + \frac{\lambda\mu}{\lambda+\mu}
\left|\mathrm{tr}\left((\tilde G_{2\times 2})^{-1/2}F_{2\times
    2} (\tilde  G_{2\times 2})^{-1/2}\right)\right|^2,
\end{equation*}
for all $F_{2\times 2}\in\mathbb{R}^{2\times 2}_{sym}$.

\item[(iii)] We have the lower bound:
\begin{equation}\label{IG}
\liminf_{h\to 0} \frac{1}{h^2}E^h(\xi^h) \geq \mathcal{I}_{G,\tilde
  G}(y) := \frac{1}{24}\int_\Omega \mathcal{Q}_2
\left(x', (\nabla y)^T G ~\nabla  \vec b \right)~\mathrm{d}x'.
\end{equation}
\end{itemize}
\end{theorem}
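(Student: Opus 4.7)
The plan is to adapt the Friesecke--James--M\"uller geometric rigidity strategy \cite{FJMgeo}, as extended to the non-Euclidean thin film setting in \cite{FJMhier, lepa, BLS}, to the present two-metric problem. Writing $F^h := G^{1/2}(\nabla \xi^h)\tilde G^{-1/2}$, the Kirchhoff scaling (\ref{en_bound}) reads $\int_{\Omega^h}\mathrm{dist}^2(F^h, SO(3))\,\mathrm{d}x \le C h^3$. First I would partition $\omega$ into a regular grid of squares of side $h$ and apply the FJM quantitative rigidity estimate on each cube $Q^h_z \times (-h/2, h/2)$ to the field $F^h$; since $G$ and $\tilde G$ are Lipschitz and therefore essentially constant at scale $h$, this yields a per-cube rotation $R^h_z \in SO(3)$ with $\int_{Q^h_z\times (-h/2,h/2)} |F^h - R^h_z|^2\,\mathrm{d}x \le C \int_{Q^h_z\times (-h/2,h/2)} \mathrm{dist}^2(F^h, SO(3))\,\mathrm{d}x$. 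Mollifying and gluing the piecewise constant map $x' \mapsto R^h_z$ produces a smooth field $\tilde R^h: \omega \to SO(3)$ satisfying $\|F^h - \tilde R^h\|_{L^2(\Omega^h)}^2 \le C h \, E^h(\xi^h)$ and $\|\nabla \tilde R^h\|_{L^2(\omega)} \le C h^{-1} E^h(\xi^h)^{1/2} \le C$, which is the crucial uniform bound.

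Subtracting suitable constants $c^h$ and invoking Poincar\'e, the rescaled $y^h$ become uniformly bounded in $W^{1,2}(\Omega^1, \mathbb{R}^3)$, while the estimate $\|\partial_3 y^h\|_{L^2} = h\|\partial_3 \xi^h\|_{L^2} \le Ch$ forces the limit $y$ to be $x_3$-independent. Passing to the limit in the $(2\times 2)$ block of $(\nabla \xi^h)^T G (\nabla \xi^h) = \tilde G + o(1)$ gives (\ref{cinque}), and the regularity $\vec N, \vec M \in W^{1,2}\cap L^\infty$ follows since these are smooth algebraic functions of $\nabla y$, which inherits $W^{1,2}$ regularity from $\tilde R^h \rightharpoonup R$ in $W^{1,2}(\omega, SO(3))$. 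For part (ii), the sequence $\partial_3 y^h/h = \partial_3 \xi^h$ is bounded in $L^2$, and its weak limit $\vec b$ is uniquely determined by the limits of the $(\alpha, 3)$ and $(3,3)$ components of the pre-strain constraint, namely $\langle \vec b, G\,\partial_\alpha y\rangle = \tilde G_{\alpha 3}$ and $\langle \vec b, G\vec b\rangle = \tilde G_{33}$; solving these algebraic relations in the orientation-preserving branch, using the orthogonal splitting provided by $\vec M$, yields the explicit formula (\ref{b}). Upgrading weak to strong $L^2$-convergence follows from the strong $L^2$-convergence of $\tilde R^h$, which in turn provides the uniform $L^\infty$ estimate on $\vec b$.

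For the lower bound, decompose $F^h = \tilde R^h(\mathrm{Id} + h Z^h)$, with the scaled strain $Z^h$ bounded in $L^2(\Omega^1)$ by the rigidity, and converging weakly (up to subsequence) to some $Z \in L^2(\Omega^1, \mathbb{R}^{3\times 3})$. A standard second-order Taylor expansion of $(\nabla \xi^h)^T G \nabla \xi^h - \tilde G$ shows that $\mathrm{sym}\, Z$ is affine in $x_3$, with $x_3$-slope whose $(2\times 2)$ block equals the symmetric part of $(\nabla y)^T G\, \nabla \vec b$; the zeroth-order term in $x_3$ is free and will be absorbed by the minimization defining $\mathcal{Q}_2$. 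Using frame-indifference, $W(F^h) = W(\mathrm{Id} + hZ^h) = \tfrac{h^2}{2}\mathcal{Q}_3(\mathrm{sym}\, Z^h) + o(h^2)$, and weak $L^2$-lower semicontinuity (from convexity of $\mathcal{Q}_3$) yields $\liminf h^{-2} E^h(\xi^h) \ge \tfrac{1}{2}\int_{\Omega^1}\mathcal{Q}_3(\mathrm{sym}\, Z)\,\mathrm{d}x$; the pointwise minimization over the free third row and column of $Z$ produces $\mathcal{Q}_2$ applied to $x_3 (\nabla y)^T G \nabla \vec b$, and integrating $x_3^2$ over $(-1/2, 1/2)$ supplies the factor $1/24$ in (\ref{IG}). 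The nonnegativity and symmetric-part-only dependence of $\mathcal{Q}_3$ follow from $W(\mathrm{Id}) = 0$ being a minimum and the expansion of $W(RF)$ in the antisymmetric direction, and pass to $\mathcal{Q}_2$ by definition; the isotropic Lam\'e formula reduces to the classical minimization of $2\mu |\mathrm{sym}\, F|^2 + \lambda(\mathrm{tr}\, F)^2$ over the third row/column after pullback by $\tilde G^{-1/2}$. The \emph{main obstacle} is the algebraic bookkeeping forced by the nontrivial off-diagonal entries $\tilde G_{13}, \tilde G_{23}$, which tilt $\vec b$ away from the unit normal $\vec M$, complicate the identification of both $\vec b$ and the $x_3$-linear part of $Z$, and require careful handling of the two-sided transformation $G^{1/2}(\cdot)\tilde G^{-1/2}$ throughout.
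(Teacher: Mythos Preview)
Your proposal is correct and follows the same FJM geometric rigidity strategy that the paper uses; the paper's proof records only the key approximation estimate (phrased there for a matrix field $Q^h\in W^{1,2}(\omega,\mathbb{R}^{3\times 3})$ approximating $\nabla\xi^h$ rather than your $SO(3)$-valued $\tilde R^h$ approximating $F^h$, an inessential reparametrization) and otherwise defers all remaining details to \cite{BLS}. Your outline is in fact considerably more detailed than the paper's own argument, and the bookkeeping issue you flag---tracking the $\tilde G^{-1/2}$ sandwich through the identification of the $x_3$-slope of $Z$---is precisely where the care is needed.
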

\begin{proof} 
The convergences in (i) and (ii) rely on a version of an approximation result from \cite{FJMgeo};
there exists matrix fields $Q^h\in W^{1,2}(\omega, \mathbb{R}^{3\times
  3})$ and  a constant $C$ uniform in $h$, i.e. depending only on the
geometry of $\omega$ and on $G, \tilde G$, such that:
\begin{equation*}
\begin{split}
& \displaystyle{\frac{1}{h} \int_{\Omega^h}|\nabla \xi^h(x', x_3) -
  Q^h(x')|^2 ~\mathrm{d}x \leq C \left( h^2 +
    \frac{1}{h}\int_{\Omega^h}\mathrm{dist}^2\big(G^{1/2}\nabla
    \xi^h \tilde G^{-1/2}, SO(3)\big)~\mathrm{d}x\right)}, \\ 
& \displaystyle{\int_{\Omega}|\nabla  Q^h(x')|^2~\mathrm{d}x' \leq C 
\left(1 + \frac{1}{h^3}\int_{\Omega^h}\mathrm{dist}^2\big(G^{1/2}\nabla
  \xi^h\tilde G^{-1/2}, SO(3)\big)~\mathrm{d}x\right)}. 
\end{split}
\end{equation*}
Further ingredients of the proof follow exactly as in \cite{BLS}, so we suppress the details. 
\end{proof}

\begin{theorem}\label{thm3}
For every compatible immersion $y\in W^{2,2}(\Omega,\mathbb{R}^3)$  satisfying (\ref{cinque}),  there exists a
sequence of recovery deformations $\xi^h\in W^{1,2}(\Omega^h,\mathbb{R}^3)$, such that:
\begin{itemize}
\item[(i)]  The rescaled sequence $y^h(x', x_3) = \xi^h(x', hx_3)$ converges in
$W^{1,2}(\Omega^1,\mathbb{R}^3)$ to $y$.
\item[(ii)] One has: $$\lim_{h\to 0} \frac{1}{h^2}E^h(\xi^h) =
  \mathcal{I}_{G, \tilde G}(y),$$
where the Cosserat vector $\vec b$ in the definition (\ref{IG}) of $\mathcal{I}_{G,\tilde G}$ is derived by (\ref{b}).
\end{itemize}
\end{theorem}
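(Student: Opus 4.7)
My plan is to build the recovery sequence by a through-thickness Taylor expansion, following the Kirchhoff plate construction of Friesecke--James--M\"uller adapted to the two-metric setting as in \cite{BLS, lepa}. The Cosserat vector $\vec b$ from (\ref{b}) supplies the first-order correction, a pointwise warping field $\vec d$ the second-order one, producing an ansatz whose through-thickness second moment recovers exactly $\mathcal{I}_{G,\tilde G}(y)$.

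\textbf{Pointwise frame identity and choice of the warping.} Setting $F = [\nabla y \,|\, \vec b]$, a direct block-matrix computation using $\langle \partial_i y, G \vec M\rangle = 0$, $\langle \vec M, G \vec M\rangle = 1$, and the Schur complement identity $\tilde G_{33} - [\tilde G_{13},\tilde G_{23}](\tilde G_{2\times 2})^{-1}[\tilde G_{13},\tilde G_{23}]^T = \det\tilde G/\det\tilde G_{2\times 2}$ yields $F^T G F = \tilde G$ on $\omega$, so that $R := G^{1/2} F \tilde G^{-1/2} \in SO(3)$ a.e., with $R \in W^{1,2}\cap L^\infty$. Next, let $\vec d:\omega\to\R^3$ be a pointwise minimizer realizing $\mathcal{Q}_2(x', (\nabla y)^T G \nabla \vec b)$: the $2\times 2$ upper-left block of $F^T G [\nabla \vec b\,|\,\vec d]$ equals the $\vec d$-independent matrix $(\nabla y)^T G \nabla \vec b$, while $\vec d$ freely parameterizes the third column of $F^T G[\nabla \vec b\,|\,\vec d]$ through the nonsingular map $\vec d \mapsto F^T G \vec d$. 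Then set
\begin{equation*}
\xi^h(x', x_3) = y(x') + x_3\, \vec b(x') + \tfrac{x_3^2}{2}\, \vec d(x'), \qquad (x', x_3)\in \Omega^h.
\end{equation*}

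\textbf{Energy expansion.} Differentiation gives $\nabla \xi^h = F + x_3 [\nabla \vec b\,|\,\vec d] + \tfrac{x_3^2}{2}[\nabla \vec d\,|\,0]$, hence
\begin{equation*}
G^{1/2}\nabla\xi^h \tilde G^{-1/2} = R\Bigl(I + x_3\, \tilde G^{-1/2} F^T G [\nabla \vec b\,|\,\vec d]\tilde G^{-1/2}\Bigr) + O(x_3^2).
\end{equation*}
Frame-invariance and the $C^2$ Taylor expansion of $W$ about $SO(3)$ then yield
\begin{equation*}
W\bigl(G^{1/2}\nabla \xi^h \tilde G^{-1/2}\bigr) = \tfrac{x_3^2}{2}\, \mathcal{Q}_3\bigl(\tilde G^{-1/2} F^T G [\nabla \vec b\,|\,\vec d]\tilde G^{-1/2}\bigr) + o(x_3^2),
\end{equation*}
with remainders controlled provided $\vec b, \vec d \in L^\infty$. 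Integrating over $\Omega^h$ and using $\int_{-h/2}^{h/2} x_3^2\,dx_3 = h^3/12$, the choice of $\vec d$ gives $h^{-2} E^h(\xi^h)\to \mathcal{I}_{G,\tilde G}(y)$, while the strong $W^{1,2}$ convergence $y^h \to y$ on $\Omega^1$ is immediate from the ansatz.

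\textbf{Main obstacle: regularity.} Since $y$ is only $W^{2,2}$, both $\nabla \vec b$ and $\vec d$ lie merely in $L^2$, so the ansatz is not Lipschitz and the pointwise Taylor expansion above must be upgraded to an $L^2$-based one. I would handle this by first performing the construction for smooth approximations $y_\epsilon$ of $y$, noting that these generally fail to satisfy (\ref{cinque}) exactly; the compatibility defect is then repaired by a normal-direction correction whose energy contribution is $o(h^2)$ for a suitably slow coupling $\epsilon=\epsilon(h)\to 0$, concluding by diagonal extraction together with continuity of $\mathcal{I}_{G,\tilde G}$ along the mollification. The truly delicate step, which is the hard part of the argument, is the simultaneous control of the metric defect and of the bending term during this mollification; this is precisely the input drawn from the framework of \cite{BLS, FJMhier} and invoked in the excerpt's proof of Theorem \ref{thm2}.
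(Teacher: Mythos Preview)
Your formal computation---the frame identity $F^TGF=\tilde G$, the choice of $\vec d$ as the pointwise $\mathcal{Q}_2$-minimizer, and the resulting through-thickness Taylor expansion---matches the paper exactly. The divergence is in how you propose to handle the low regularity of $y$, and here your sketch has a genuine gap.

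The paper does \emph{not} mollify. It invokes the Lusin-type truncation of \cite[Proposition~2]{FJMhier}: one produces $y^h\in W^{2,\infty}$, $b^h\in W^{1,\infty}$ with $y^h=y$ and $b^h=\vec b$ on a good set $\omega_h\subset\omega$ satisfying $|\omega\setminus\omega_h|=o(h^2)$, together with $h\big(\|y^h\|_{W^{2,\infty}}+\|b^h\|_{W^{1,\infty}}\big)\leq\epsilon$. On $\omega_h$ the constraint (\ref{cinque}) holds \emph{exactly}, so the formal expansion is rigorous there; on the small bad set the integrand stays bounded (by the Lipschitz control) and its contribution is $o(h^2)$ by measure alone. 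A separate slowed-down sequence $d^h\to\vec d$ in $L^2$ with $h^2\|d^h\|_{W^{1,\infty}}\to 0$ handles the warping. The recovery sequence is then $\xi^h(x',x_3)=y^h(x')+x_3 b^h(x')+\tfrac{x_3^2}{2}d^h(x')$.

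Your mollification route runs into a scaling obstruction. Writing $D_\epsilon=(\nabla y_\epsilon)^T G\nabla y_\epsilon-\tilde G_{2\times 2}$, the standard estimate gives $\|D_\epsilon\|_{L^2}\lesssim\|\nabla y_\epsilon-\nabla y\|_{L^2}\lesssim\epsilon\|\nabla^2 y\|_{L^2}$, so the zeroth-order (membrane) energy contribution is of order $\epsilon^2$, forcing $\epsilon=o(h)$. But then $h\|\nabla^2 y_\epsilon\|_{L^\infty}\sim h/\epsilon\to\infty$, which destroys the Taylor expansion control you need for the $O(x_3)$ term. These two demands are incompatible, and a ``normal-direction correction'' cannot repair the tangential $2\times2$ block of the metric defect: that block depends only on $\nabla y_\epsilon$, not on $\vec b$ or on any $x_3$-dependent addendum. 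The Lusin truncation sidesteps this entirely by never breaking the constraint on most of the domain.
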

\begin{proof}
Let $y\in W^{2,2}(\Omega, \mathbb{R}^3)$ satisfy
(\ref{cinque}). Define $ \vec b$ according to (\ref{b}) and let: 
$$Q= \left[\begin{array}{ccc} \partial_1y &\partial_2 y & 
    \vec b\end{array}\right] \in W^{1,2}\cap L^\infty(\omega, \mathbb{R}^{3\times 3}).$$
By Theorem \ref{thm2}, it follows that:
\begin{equation*}
G^{1/2}Q\tilde G^{-1/2}\in SO(3) \qquad \forall \mbox{a.e.}~ x'\in\omega.
\end{equation*}
Define the limiting warping field $ \vec d\in L^2(\Omega, \mathbb{R}^3)$:
\begin{equation*}
 \vec d(x') = G^{-1} Q^{T, -1}\left( c\big(x', (\nabla y)^T G ~\nabla
   \vec b\big) - \left[\begin{array}{c} \langle \partial_1\vec b, G\vec b\rangle \\ 
\langle \partial_2\vec b, G\vec b\rangle \\ 0
\end{array}\right]\right),
\end{equation*}
where $c(x', F_{2\times 2})$ denotes the unique minimizer of the problem in:
\begin{equation*}
\begin{split}
\forall F_{2\times 2}\in\mathbb{R}^{2\times 2}_{sym}\qquad 
\mathcal{Q}_2(x', F_{2\times 2}) & = \min\left\{
  \mathcal{Q}_3\big(\tilde G^{-1/2}(F^*_{2\times 2} +
  \mbox{sym}(c\otimes e_3)) \tilde G^{-1/2}\big); ~ c\in\mathbb{R}^3\right\}.
\end{split}
\end{equation*}

Let $\{d^h\}$ be a approximating sequence in $W^{1,\infty}(\Omega, \mathbb{R}^3)$, satisfying:
\begin{equation}\label{dh}
d^h\to  \vec d \quad \mbox{ strongly in } L^2(\omega, \mathbb{R}^3), \quad \mbox{ and}\quad
h^2\|d^h\|_{W^{1, \infty}} \to 0.
\end{equation}
Note that such sequence can always be derived by reparametrizing
(slowing down) a sequence of smooth approximations of $ \vec d$.
Similiarly, consider the approximations $y^h\in W^{2,\infty}(\omega,
\mathbb{R}^3)$ and $ b^h\in W^{1,\infty}(\omega,\mathbb{R}^3)$,
with the following properties:
\begin{equation}\label{dwa2}
\begin{split}
& y^h\to y \quad \mbox{ strongly in } W^{2,2}(\omega,\mathbb{R}^3),
\quad \mbox{and } ~~  b^h\to  \vec b \quad \mbox{ strongly in }
W^{1,2}(\omega,\mathbb{R}^3)\\
& h\left( \|y^h\|_{W^{2,\infty}} + \|
  b^h\|_{W^{1,\infty}}\right)\leq \epsilon\\
&\frac{1}{h^2}|\omega\setminus \omega_h| \to 0, \quad \mbox{where }~~
\omega_h =\left\{x'\in\omega; ~ y^h(x') =  y(x') \mbox{ and }  b^h(x')
  =  \vec 
b(x')\right\} 
\end{split}
\end{equation}
for some appropriately small $\epsilon> 0$.
Existence of approximations with the claimed properties follows by
partition of unity and truncation arguments, as a special case of the
Lusin-type result for Sobolev functions (see {Proposition 2 in \cite{FJMhier})}.

We now define the recovery sequence $\xi^h\in W^{1,\infty}(\Omega^h, \mathbb{R}^3)$ by:
\begin{equation*}
\xi^h(x', x_3) = y^h(x') + x_3  b^h(x') + \frac{x_3^2}{2}d^h(x').
\end{equation*}
Consequently, the rescalings $y^h\in W^{1,\infty}(\Omega^1,
\mathbb{R}^3)$ are:
$$ y^h(x', x_3) = y^h(x') + hx_3  b^h(x') + \frac{h^2}{2}x_3^2 d^h(x'),$$
and so in view of (\ref{dh}) and (\ref{dwa2}), Theorem
\ref{thm3} (i) follows directly. The remaining convergence in (ii) is
achieved via standard calculations exactly as in \cite{BLS}. We suppress the details.
\end{proof}

It now immediately follows that:

\begin{corollary}\label{upper}
Existence of a $W^{2,2}$ regular immersion satisfying
(\ref{cinque}) is equivalent to the upper bound on the energy scaling at minimizers:
$$\exists C>0 \qquad \inf_{\xi\in W^{1,2}(\Omega^h,\mathbb{R}^3)} E^h(\xi) \leq Ch^2.$$
\end{corollary}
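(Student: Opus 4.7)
The plan is to obtain the corollary as an immediate consequence of the $\Gamma$-convergence package developed in Theorems \ref{thm2} and \ref{thm3}, treating each direction of the equivalence separately.

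For the forward implication, assume there exists $y \in W^{2,2}(\omega, \mathbb{R}^3)$ satisfying the midplate compatibility (\ref{cinque}). I would apply Theorem \ref{thm3} to produce a recovery sequence $\xi^h \in W^{1,2}(\Omega^h, \mathbb{R}^3)$ with
\[
\lim_{h\to 0} \frac{1}{h^2} E^h(\xi^h) = \mathcal{I}_{G,\tilde G}(y) < +\infty.
\]
Hence for all sufficiently small $h$ one has $E^h(\xi^h) \leq (\mathcal{I}_{G,\tilde G}(y)+1)\, h^2$, and a possible readjustment of the constant to cover finitely many initial thicknesses yields the desired uniform bound
\[
\inf_{\xi \in W^{1,2}(\Omega^h, \mathbb{R}^3)} E^h(\xi) \leq C h^2.
\]

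For the reverse implication, assume this upper bound on the infimum. I would select a sequence of near-minimizers $\xi^h \in W^{1,2}(\Omega^h, \mathbb{R}^3)$ satisfying $E^h(\xi^h) \leq 2 C h^2$, which is precisely the scaling hypothesis (\ref{en_bound}) required by Theorem \ref{thm2}. Invoking part (i) of that theorem, there exist constants $c^h \in \mathbb{R}^3$ such that the normalized rescalings $y^h(x', x_3) = \xi^h(x', h x_3) - c^h$ admit a subsequence converging strongly in $W^{1,2}(\Omega^1, \mathbb{R}^3)$ to some $y \in W^{2,2}(\omega, \mathbb{R}^3)$ verifying (\ref{cinque}), which produces the required immersion.

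The corollary contains no independent technical content beyond what Theorems \ref{thm2} and \ref{thm3} already deliver, so I do not anticipate any genuine obstacle: the only points worth a line of verification are that the near-minimizer sequence may indeed be chosen (via the definition of infimum) and that the normalization by constants $c^h$ does not affect the relevant notion of ``existence of an immersion satisfying (\ref{cinque}).'' Both are routine.
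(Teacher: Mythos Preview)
Your argument is correct and is exactly what the paper intends: the corollary is stated there without proof, as an immediate consequence of Theorems \ref{thm2} and \ref{thm3}, and you have spelled out precisely that deduction. Both directions are handled correctly, and the minor bookkeeping points you flag (choosing near-minimizers, the irrelevance of the translation by $c^h$) are indeed routine.
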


The following corollary is a standard conclusion of the
established $\Gamma$-convergence (see e.g. \cite{Braides}).  
It indicates that sequences of approximate solutions to the original problem on a
thin shell are in one-one correspondence to the minimizers of the thin
limit variational model  $\mathcal{I}_{G, \tilde G}$. 

\begin{corollary}\label{infmin}
Assume that (\ref{cinque}) admits a $W^{2,2}$-regular solution $y$.  Then
any sequence of approximate minimizers $\xi^h$ of (\ref{energyh}),  
satisfying the property:
$$ \lim_{h \to 0} \frac 1{h^2} \big(E^h(\xi^h)- \inf E^h\big) =0, $$ 
converges, after the proper rescaling and up to a subsequence (see
Theorem \ref{thm2}), to a minimizer of the functional $\mathcal{I}_{G, \tilde G}$. 
In particular, $\mathcal{I}_{G, \tilde G}$ attains its
minimum. Conversely, any minimizer $y$ of $\mathcal{I}_{G, \tilde G}$
is a limit  of approximate minimizers $\xi^h$ to (\ref{energyh}).    
\end{corollary}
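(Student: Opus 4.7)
The plan is to derive the corollary as the standard consequence of the $\Gamma$-convergence established in Theorems \ref{thm2} and \ref{thm3}, together with the compactness statement of Theorem \ref{thm2}(i)-(ii). The hypothesis that (\ref{cinque}) admits a $W^{2,2}$-regular solution is used precisely to ensure that $\inf E^h$ is of order $h^2$, which places any sequence of approximate minimizers in the regime where Theorem \ref{thm2} applies.

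First I would fix any $W^{2,2}$ solution $y_0$ of (\ref{cinque}). Applying Theorem \ref{thm3} to $y_0$ yields a recovery sequence $\xi_0^h$ with $\lim_{h\to 0} h^{-2} E^h(\xi_0^h) = \mathcal{I}_{G,\tilde G}(y_0)$, so in particular $\inf E^h \le E^h(\xi_0^h) \le C h^2$. Consequently, any sequence $\xi^h$ of approximate minimizers in the stated sense satisfies the energy bound (\ref{en_bound}), and Theorem \ref{thm2}(i)-(ii) provides constants $c^h\in\mathbb{R}^3$ and a subsequence along which the rescaled deformations $y^h(x',x_3) = \xi^h(x',hx_3) - c^h$ converge strongly in $W^{1,2}(\Omega^1,\mathbb{R}^3)$ to some $y\in W^{2,2}(\omega,\mathbb{R}^3)$ satisfying (\ref{cinque}), with $h^{-1}\partial_3 y^h \to \vec b$ strongly in $L^2$, where $\vec b$ is given by (\ref{b}).

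Next I would combine the lower bound (\ref{IG}) with the recovery sequence upper bound to identify $y$ as a minimizer. For every competitor $\tilde y$ satisfying (\ref{cinque}), Theorem \ref{thm3} gives a recovery sequence $\tilde \xi^h$ with $\lim h^{-2} E^h(\tilde \xi^h) = \mathcal{I}_{G,\tilde G}(\tilde y)$, hence
\begin{equation*}
\limsup_{h\to 0} \frac{1}{h^2}\inf E^h \le \mathcal{I}_{G,\tilde G}(\tilde y).
\end{equation*}
Passing to the infimum over all such $\tilde y$ and using that $\xi^h$ is approximately minimizing gives
\begin{equation*}
\limsup_{h\to 0} \frac{1}{h^2} E^h(\xi^h) = \limsup_{h\to 0} \frac{1}{h^2}\inf E^h \le \inf \mathcal{I}_{G,\tilde G}.
\end{equation*}
Combined with the liminf bound $\liminf h^{-2} E^h(\xi^h) \ge \mathcal{I}_{G,\tilde G}(y)$ from (\ref{IG}), this shows $\mathcal{I}_{G,\tilde G}(y) \le \inf \mathcal{I}_{G,\tilde G}$, so $y$ is indeed a minimizer and the infimum is attained. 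The converse implication is immediate: given a minimizer $y$ of $\mathcal{I}_{G,\tilde G}$, its recovery sequence $\xi^h$ from Theorem \ref{thm3} satisfies $h^{-2}E^h(\xi^h) \to \mathcal{I}_{G,\tilde G}(y) = \min \mathcal{I}_{G,\tilde G} = \lim h^{-2}\inf E^h$, which is exactly the approximate minimality condition.

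The main subtlety, rather than an obstacle, is ensuring that the equality $\limsup h^{-2}\inf E^h = \inf \mathcal{I}_{G,\tilde G}$ holds globally, i.e., that the recovery sequences of Theorem \ref{thm3} exhaust all competitors of the $\Gamma$-limit; this is guaranteed precisely because the liminf bound (\ref{IG}) and the recovery construction are defined on the same admissible class of $W^{2,2}$ immersions satisfying (\ref{cinque}), so no further regularization step is needed.
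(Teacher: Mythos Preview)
Your argument is correct and is precisely the standard $\Gamma$-convergence deduction that the paper invokes (it gives no proof beyond citing \cite{Braides}); the only step you leave slightly implicit is that $\lim_{h\to 0} h^{-2}\inf E^h = \min \mathcal{I}_{G,\tilde G}$ holds along the full sequence, which follows from your limsup bound together with the observation that every subsequence admits a further subsequence to which the compactness and liminf bound apply.
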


On a final note, observe that the defect $\kappa(G, \tilde G)$, as
defined in \eqref{defect}), is of the order  
$h^2  \mathcal\min ~\mathcal{I}_{G, \tilde G} + o(h^2)$. It would be hence of
interest to discuss the necessary and sufficient conditions for having
$\min \mathcal{I}_{G, \tilde G}=0$, which in case of $G=\mbox{Id}_3$
have been precisely derived in \cite{BLS}. 

\section{An equivalent system of PDEs for \eqref{trzy}}\label{equivsec}
  
In this section, we investigate the integrability conditions of the
system (\ref{trzy}).
Firstly, recall that since the Levi-Civita connection is metric-compatible, we have:
\begin{equation}\label{chris}
\partial_i G_{jk} = G_{mk}\Gamma_{ij}^m + G_{mj}\Gamma_{ik}^m,
\end{equation}
where the Christoffel symbols (of second kind) of the metric $G$ are:
$$\Gamma^i_{kl} = \frac{1}{2} G^{im} (\partial_l G_{mk} + \partial_kG_{ml} - \partial_mG_{kl}).$$
Above, we used the Einstein summation
over the repeated upper and lower indices from $1$ to $n$.

\begin{lemma}\label{lem1}
Assume that there is a bilipschitz map $\xi:\Omega\to
U:=\xi(\Omega)$, between two open, bounded subsets: $\Omega, U$ 
of $\mathbb{R}^n$, satisfying (\ref{trzy}) and such that $\xi\in W^{2,2}(\Omega, U)$.

Then, denoting by $\tilde\Gamma_{ij}^k$ the Christoffel symbols of $\tilde
G$, and by $\Gamma_{ij}^k$ the Christoffel symbols of the metric
$G\circ \xi^{-1}$ on $U$, there holds, 
in $\Omega$:
\begin{equation}\label{cztery}
\forall i,j,s:1\ldots n \qquad 
\partial_{ij} \xi^s =\partial_m\xi^s\tilde\Gamma_{ij}^m
- \partial_i\xi^p\partial_j\xi^q(\Gamma_{pq}^s\circ \xi).
\end{equation} In particular $\xi$ automatically enjoys higher
regularity: $\xi\in W^{2,\infty}(\Omega, U)$. 
\end{lemma}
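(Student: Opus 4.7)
The plan is to derive \eqref{cztery} by the classical cyclic-permutation trick applied to the isometry identity $\tilde G_{ij} = G_{pq}(\xi)\,\partial_i\xi^p\,\partial_j\xi^q$, and then read off the $W^{2,\infty}$ regularity directly from the resulting formula. Since by hypothesis $\xi \in W^{2,2}$, all first-order differentiations of this identity are valid a.e., which is all we need (every manipulation below is algebraic).

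First I would differentiate the identity $\tilde G_{ij}(x) = G_{pq}(\xi(x))\,\partial_i\xi^p\,\partial_j\xi^q$ with respect to $x_k$, producing
\begin{equation*}
\partial_k\tilde G_{ij} = (\partial_r G_{pq})(\xi)\,\partial_k\xi^r\partial_i\xi^p\partial_j\xi^q + G_{pq}(\xi)\bigl(\partial_{ki}\xi^p\,\partial_j\xi^q + \partial_i\xi^p\,\partial_{kj}\xi^q\bigr).
\end{equation*}
Writing this equation together with its cyclic permutations in $(i,j,k)$, and forming the combination (permutation over $i$) + (permutation over $j$) $-$ (permutation over $k$), the second-derivative terms collapse by the symmetry $\partial_{\alpha\beta}\xi = \partial_{\beta\alpha}\xi$ and leave only one copy of $G_{pq}(\xi)\,\partial_{ij}\xi^p\,\partial_k\xi^q$.

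Next I would convert the metric derivatives into Christoffel symbols. On the $\tilde G$ side, the combination $\partial_i\tilde G_{jk} + \partial_j\tilde G_{ik} - \partial_k\tilde G_{ij}$ is precisely $2\tilde G_{lk}\tilde\Gamma_{ij}^l$. On the $G$ side, applying the identity \eqref{chris} to $\partial_r G_{pq}$ yields two terms; after contraction with $F_i^r F_j^p F_k^q$ (where $F_\alpha^\beta := \partial_\alpha\xi^\beta$), the symmetries $\Gamma_{rp}^s = \Gamma_{pr}^s$ and $G_{sq} = G_{qs}$ make the analogous cyclic combination collapse to a single term $2G_{pq}(\xi)\,\Gamma_{mn}^p(\xi)\,F_i^m F_j^n F_k^q$. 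Substituting back, one obtains
\begin{equation*}
G_{pq}(\xi)\,F_k^q\,\Bigl[\partial_{ij}\xi^p - F_l^p\,\tilde\Gamma_{ij}^l + \Gamma_{mn}^p(\xi)\,F_i^m F_j^n\Bigr] = 0.
\end{equation*}
Since $\xi$ is bilipschitz, $F = \nabla\xi$ is invertible a.e., and $G(\xi)$ is positive definite; therefore the composite matrix $G_{pq}(\xi)F_k^q$ is invertible in the $(p,k)$ indices, and the bracket must vanish a.e., which is exactly \eqref{cztery}. The bookkeeping of the two cyclic cancellations (one on the second-derivative side, one on the Christoffel side) is the only delicate step, but it is purely combinatorial.

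Finally, for the promotion to $W^{2,\infty}$, I would observe that on the right-hand side of \eqref{cztery} each factor is in $L^\infty$: $\nabla\xi$ by Proposition \ref{regularity}(i), and $\tilde\Gamma_{ij}^m$, $\Gamma_{pq}^s\circ\xi$ because $\tilde G$ and $G$ are Lipschitz (hence their Christoffel symbols lie in $L^\infty$ on $\Omega$ and on $\xi(\Omega)$, respectively). Thus $D^2\xi \in L^\infty$ and $\xi \in W^{2,\infty}(\Omega,U)$, completing the proof.
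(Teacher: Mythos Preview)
Your argument is correct. It is the classical derivation of the Christoffel transformation law via cyclic permutation of $\partial_k\tilde G_{ij}$ and passage through the first-kind symbols; the paper in fact reproduces exactly this computation later in Remark~\ref{amitcalc}, arriving at (\ref{a1}) and then (\ref{41}). The proof given for Lemma~\ref{lem1} itself is organized a little differently: instead of forming the cyclic combination, the paper equates the expression for $\partial_i\tilde G_{jk}$ coming from (\ref{chris}) with the one coming from direct differentiation of (\ref{trzy}), obtains the antisymmetric relation $\langle P_{ij}, (G\nabla\xi)e_k\rangle = -\langle P_{ik}, (G\nabla\xi)e_j\rangle$ for the defect $P_{ij}^s := \partial_{ij}\xi^s - \partial_m\xi^s\tilde\Gamma_{ij}^m + \partial_i\xi^p\partial_j\xi^q\Gamma_{pq}^s$, and then uses $P_{ij}=P_{ji}$ together with a three-cycle argument to force $P_{ij}=0$. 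Both routes encode the same cyclic cancellation; yours is marginally more direct, while the paper's version packages the cancellation as a Koszul-type antisymmetry. Your $W^{2,\infty}$ step matches the paper's, and your remark that $G\circ\xi^{-1}$ is Lipschitz (so that $\Gamma^s_{pq}\circ\xi\in L^\infty$) is the point the paper also invokes.
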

\begin{proof}
By (\ref{chris}) we obtain:
$$\partial_i \tilde G_{jk} = \big((\nabla\xi)^T G (\nabla
\xi)\big)_{mk}\tilde\Gamma_{ij}^m 
+ \big((\nabla\xi)^T G (\nabla \xi)\big)_{mj}\tilde\Gamma_{ik}^m,$$
while differentiating \eqref{trzy} directly, gives:
$$\partial_i \tilde G_{jk} = \big((\nabla\partial_i\xi)^T G (\nabla
\xi)\big)_{jk} + \partial_i\xi^p 
\big((\nabla\xi)^T \partial_p(G\circ\xi^{-1}) (\nabla
\xi)\big)_{jk} + \big((\nabla\xi)^T G
(\nabla\partial_i\xi)\big)_{jk},$$ where we used \cite[Theorem
2.2.2]{ziemer} to conclude that $G\circ \xi^{-1} \in W^{1,\infty}$ and
to apply the chain rule.  
Equating both sides above, and using \eqref{chris} to the
Lipschitz metric $G\circ\xi^{-1}$, we get:
\begin{equation*}
\begin{split}
\partial_m\xi^s G_{st}\partial_k\xi^t \tilde\Gamma^m_{ij} & + \partial_m\xi^s G_{st}\partial_j\xi^t \tilde\Gamma^m_{ik}
\\ & = \partial_{ij}\xi^s G_{st}\partial_k\xi^t + \partial_{j}\xi^s \partial_{i}\xi^p \partial_{k}\xi^t
\big(G_{tm}\Gamma_{ps}^m + G_{sm}\Gamma_{pt}^m\big) + \partial_{ik}\xi^s G_{st}\partial_j\xi^t, 
\end{split}
\end{equation*}
which we rewrite as:
\begin{equation}\label{sth}
\begin{split}
& \partial_m\xi^s \tilde\Gamma^m_{ij} \big(G\nabla\xi\big)_{sk}
+ \partial_m\xi^s \tilde\Gamma^m_{ik}\big(G\nabla\xi\big)_{sj}  
\\ & = \partial_{ij}\xi^s \big(G\nabla\xi\big)_{sk}
+ \partial_{j}\xi^q \partial_{i}\xi^p \Gamma_{pq}^s\big(G\nabla\xi\big)_{sk} + 
\partial_{j}\xi^p \partial_{k}\xi^t \Gamma_{pt}^s \big(G\nabla\xi\big)_{sj} 
+ \partial_{ik}\xi^s \big(G\nabla\xi\big)_{sj}. 
\end{split}
\end{equation}
For each $i,j:1\ldots n$ we now define the vector
$P_{ij}\in\mathbb{R}^n$ with components:
$$P_{ij}^s = \partial_{ij}\xi^s - \partial_m\xi^s\tilde\Gamma_{ij}^m
+ \partial_j\xi^q\partial_i\xi^p\Gamma_{pq}^s, \qquad s:1\ldots n. $$
By \eqref{sth}, we see that:
$$\forall i,j,k:1\ldots n\qquad \langle P_{ij},
\big(G\nabla\xi)_{k-col}\rangle = - \langle P_{ik}, \big(G\nabla\xi)_{j-col}\rangle, $$
where $\big(G\nabla\xi)_{k-col}$ is the vector denoting the $k$-th
column of the matrix $G\nabla\xi$.

Since $P_{ij} = P_{ji}$, it now follows that:
\begin{equation}\label{pij}
\forall i,j,k:1\ldots n\qquad \langle P_{ij},
\big(G\nabla\xi)_{k-col}\rangle = 0.
\end{equation}
But the columns of the invertible $G\nabla\xi$ form a linearly
independent system, so there must be $P_{ij}=0$, which completes the
proof of (\ref{cztery}).
\end{proof}

\begin{corollary}\label{boot}
Let $\xi:\Omega \to U$ be a bilipschitz map satisfying
(\ref{trzy}) as in Lemma \ref{lem1}. Assume that $\xi \in W^{2,2}(\Omega, U)$. 
Then $\xi$ and $\zeta= \xi^{-1}$
are both smooth and bounded, together with all their derivatives.
In particular, (\ref{cztery}) holds everywhere in $\Omega$.
\end{corollary}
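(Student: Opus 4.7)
The plan is to combine Lemma \ref{lem1} with Proposition \ref{regularity}(ii) for interior smoothness, then to run a separate bootstrap on (\ref{cztery}) to obtain uniform boundedness of all derivatives. Since $\xi$ is bilipschitz, $\xi \in W^{1,\infty}(\Omega, U)$ with $\det \nabla \xi$ of constant nonzero sign a.e., and $\zeta = \xi^{-1}$ is Lipschitz; after composing with a reflection of $\R^n$ if necessary, we may assume $\det \nabla \xi > 0$, so (\ref{orienta}) holds. Lemma \ref{lem1} then already gives $\xi \in W^{2,\infty}(\Omega, U)$ and establishes (\ref{cztery}) a.e., which is where I would start.

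Interior smoothness is essentially immediate from Proposition \ref{regularity}(ii): since $G$ and $\tilde G$ are smooth and $E(\xi)=0$, iterating the statement over all $k\ge 0$ yields $\xi \in \mathcal{C}^{k+1,\mu}(\Omega, \R^n)$ for every $k$, hence $\xi \in \mathcal{C}^\infty(\Omega, \R^n)$. Smoothness of $\zeta$ then follows from the classical inverse function theorem applied pointwise, using that $|\det \nabla \xi|$ is bounded below by the bilipschitz constant of $\xi$. As a consequence, (\ref{cztery}) holds in the classical sense everywhere in $\Omega$.

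The global boundedness statement — that $\xi, \zeta$ and all their derivatives are bounded on $\Omega$ — does not come directly from Proposition \ref{regularity}, which is purely interior, and is the substantive part of the argument. I would drive it by treating (\ref{cztery}) as an expression of $\nabla^2 \xi$ in terms of $\nabla\xi$ and of coefficients built from $G, \tilde G$ and their derivatives. The key technical observation is that the apparently troublesome term $\Gamma_{pq}^s\circ\xi$ — where the $\Gamma_{pq}^s$ are the Christoffel symbols of $G\circ\xi^{-1}$ on $U$ — can, by the chain rule applied to the composition $\mathcal{G}=G\circ \xi^{-1}$ and evaluated at $\xi(x)$, be rewritten purely in terms of $G, \nabla G$, and the matrix inverse $(\nabla\xi)^{-1}$, with no derivative of $\xi^{-1}$ ever appearing. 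Since $G, \tilde G$ are smooth with all derivatives bounded on $\bar\Omega$ and since $(\nabla\xi)^{-1}$ is uniformly bounded by the bilipschitz constants, an induction on the order of differentiation, starting from the $W^{2,\infty}$-bound given by Lemma \ref{lem1}, produces a uniform $L^\infty(\Omega)$-bound on $\nabla^k \xi$ for every $k$. The matching bounds on derivatives of $\zeta$ follow from the identity $(\nabla \zeta)\circ \xi = (\nabla\xi)^{-1}$ together with the chain rule and the bilipschitz hypothesis.

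The main obstacle is precisely the observation that keeps the bootstrap honest. Naively, $\Gamma_{pq}^s\circ\xi$ depends on derivatives of $\mathcal{G} = G \circ \xi^{-1}$, and hence on $\nabla \xi^{-1}$ — which is itself what one is trying to control. The chain-rule reformulation is essential because it keeps all compositions on the side of $\xi$, where regularity is being progressively accumulated, rather than of $\xi^{-1}$; once this has been noted, the induction closes cleanly and the remaining assertions of the corollary follow.
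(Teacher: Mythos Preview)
Your argument is correct but differs in route from the paper's. The paper does not invoke Proposition \ref{regularity}(ii); instead, it first upgrades $\zeta$ to $W^{2,\infty}$ directly via the identity $\nabla\zeta = (\nabla\xi\circ\zeta)^{-1}$, then derives the analogue of (\ref{cztery}) for $\zeta$ by applying Lemma \ref{lem1} to the inverse relation $(\nabla\zeta)^T(\tilde G\circ\zeta)(\nabla\zeta) = G\circ\zeta$, and runs the bootstrap simultaneously on the pair of equations for $\xi$ and $\zeta$. Your route instead keeps the analysis entirely on the $\xi$ side by exploiting the rewriting of $\Gamma_{pq}^s\circ\xi$ in terms of $G$, $\nabla G$ and $(\nabla\xi)^{-1}$ --- which is exactly the content of Corollary \ref{lem2}, equation (\ref{piec}) --- and only transfers the bounds to $\zeta$ at the end via $(\nabla\zeta)\circ\xi=(\nabla\xi)^{-1}$ and the chain rule. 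Both approaches deliver the same $W^{k,\infty}$ control; yours is somewhat more economical in that it avoids re-deriving the $\zeta$-equation, while the paper's is more symmetric in its treatment of $\xi$ and $\zeta$. One minor point: your appeal to Proposition \ref{regularity}(ii) for interior smoothness is redundant, since once the bootstrap produces $\xi\in W^{k,\infty}(\Omega)$ for every $k$, interior $\mathcal{C}^\infty$ regularity already follows; that step can simply be dropped.
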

\begin{proof}
It was already established that $\nabla^2 \xi \in L^\infty$. We have
$F:= \nabla \zeta = (\nabla \xi \circ \zeta)^{-1} \in L^\infty$.
Notice that $F^{-1}$ 
is the composition of two Lipschitz mappings and hence it is
Lipschitz. We conclude that for all $i$, $\partial_i F = -
F \partial_i (F^{-1}) F  \in L^\infty$, which implies that   
$\nabla \zeta \in W^{1,\infty}$, and hence $\zeta \in W^{2,\infty}(U, \Omega)$. 

By (\ref{trzy}) we get the following formula: 
$(\nabla\zeta)^T (\tilde G\circ\zeta) (\nabla\zeta) = G\circ\zeta$,
valid  in $U$. By the same calculations as in Lemma
\ref{lem1}, it results in:
\begin{equation*}
\forall i,j,s:1\ldots n \qquad 
\partial_{ij} \zeta^s =\partial_m\zeta^s\Gamma_{ij}^m - \partial_i\zeta^p\partial_j\zeta^q\tilde\Gamma_{pq}^s.
\end{equation*}
In view of (\ref{cztery}) and by a bootstrap argument, we obtain that
$\xi,\zeta\in W^{k,\infty}$ for every $k\geq 1$. Hence the result follows.
\end{proof}

\begin{corollary}\label{lem2}
Equivalently, \eqref{cztery} can be written as:
\begin{equation}\label{piec}
\begin{split}
\forall i,j,s:1\ldots n \quad 
\partial_{ij} \xi^s = ~& \partial_m\xi^s\tilde\Gamma_{ij}^m \\  &
-\frac{1}{2}G^{sm}  \Big(\partial_i\xi^q\partial_j G_{mq}
+ \partial_j\xi^p\partial_iG_{mp}
- \partial_j\xi^p\partial_i\xi^q\partial_tG_{pq} \big((\nabla\xi)^{-1}\big)_{tm}\Big).
\end{split}
\end{equation}
\end{corollary}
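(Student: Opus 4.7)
The plan is to show that the Christoffel term $\partial_i\xi^p\partial_j\xi^q(\Gamma^s_{pq}\circ\xi)$ appearing on the right-hand side of \eqref{cztery} equals $\tfrac12 G^{sm}$ times the bracketed expression in \eqref{piec}. Since the two formulas agree on the $\tilde\Gamma$-term, this is the only identity that needs to be verified, and the verification reduces to pushing the Christoffel symbols of the metric $G\circ\xi^{-1}$ on $U$ back to $\Omega$ via the chain rule.

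First, I would expand
\begin{equation*}
\Gamma^s_{pq}(y) = \frac{1}{2}(G\circ\xi^{-1})^{sm}(y)\Big(\partial_{y_q}(G\circ\xi^{-1})_{mp} + \partial_{y_p}(G\circ\xi^{-1})_{mq} - \partial_{y_m}(G\circ\xi^{-1})_{pq}\Big)(y),
\end{equation*}
evaluate at $y=\xi(x)$, and use that $\nabla(\xi^{-1})\circ\xi = (\nabla\xi)^{-1}$, which is available because $\xi^{-1}$ is smooth by Corollary \ref{boot}. Each derivative $\partial_{y_\alpha}(G\circ\xi^{-1})_{\beta\gamma}$ evaluated at $\xi(x)$ then becomes $\partial_t G_{\beta\gamma}(x)\cdot((\nabla\xi(x))^{-1})_{t\alpha}$. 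This yields an expression for $(\Gamma^s_{pq}\circ\xi)(x)$ in which each of the three terms carries an inverse-Jacobian factor $((\nabla\xi)^{-1})$.

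Second, I would multiply by $\partial_i\xi^p\partial_j\xi^q$ and apply the component identity $\sum_q \partial_j\xi^q\,((\nabla\xi)^{-1})_{tq} = \delta_{jt}$, which is just $(\nabla\xi)^T\cdot((\nabla\xi)^{-1})^T = \mathrm{Id}$ written out in indices. In the first Christoffel term this collapses $\partial_t G_{mp}$ into $\partial_j G_{mp}$, producing $\partial_i\xi^p\partial_j G_{mp}$; symmetrically, the second term collapses to $\partial_j\xi^q\partial_i G_{mq}$. The third term has a different structure: the surviving index $m$ of $((\nabla\xi)^{-1})_{tm}$ is contracted with the outer factor $G^{sm}$ rather than with $\partial_i\xi^p$ or $\partial_j\xi^q$, so no analogous collapse occurs and the inverse Jacobian remains. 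Using the symmetry $G_{pq}=G_{qp}$ to interchange the roles of $p$ and $q$ in the third term to match the precise index placement in \eqref{piec} recovers the claimed formula.

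The calculation is essentially a chain-rule computation, so there is no genuine obstacle; the only point that requires care is the asymmetric role of the three Christoffel terms. Two of them have a differentiation index that is free to contract with $\nabla\xi$ and therefore produce pure $x$-derivatives of $G$, while the third has its differentiation index tied to the summation with $G^{sm}$, forcing a residual $((\nabla\xi)^{-1})$ factor to persist. This asymmetry is precisely what makes \eqref{piec} an algebraically constrained system for $\nabla^2\xi$ with coefficients that still depend on $\nabla\xi$ through its inverse, rather than a closed relation in $\nabla\xi$ alone.
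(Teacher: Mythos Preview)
Your proposal is correct and follows essentially the same route as the paper's proof: expand the Christoffel symbols of $G\circ\xi^{-1}$ via the chain rule, then contract the inverse-Jacobian factors against $\partial_i\xi^p$ and $\partial_j\xi^q$ using $\partial_j\xi^q\,((\nabla\xi)^{-1})_{tq}=\delta_{jt}$, which the paper writes equivalently as $\partial_p\zeta^t\,\partial_j\xi^p=\delta_j^t$ with $\zeta=\xi^{-1}$. Your more detailed account of why the third term retains the $(\nabla\xi)^{-1}$ factor while the first two collapse is accurate and matches the paper's one-line remark on the contraction.
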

\begin{proof}
Denoting $\zeta=\xi^{-1}$ as before, we obtain:
$$\Gamma_{pq}^s = \frac{1}{2} G^{sm} \Big(\partial_p\zeta^t\partial_t G_{mq}
+ \partial_q\zeta^t\partial_tG_{mp} - \partial_m\zeta^t\partial_t
G_{pq} \Big).$$
Inserting in \eqref{cztery} and contracting
$\partial_p\zeta^t\partial_j\xi^p $ to the Kronecker delta $\delta_j^t$,
we obtain \eqref{piec}.
\end{proof}

\begin{theorem}\label{lem3}
Consider the following system of the algebraic-differential equations
in the unknowns $\xi, w_i:\Omega\to\mathbb{R}^n$, $i:1\ldots n$: 
\begin{subequations}\label{system} 
\begin{align}
& \tilde G_{ij} = w_i^t G_{st} w_j^s \label{i} \\
& w_i^t = \partial_i\xi^t \label{ii}\\
& \partial_{j} w_i^s =  w_m^s\tilde\Gamma_{ij}^m 
-\frac{1}{2}G^{sm}  \Big(w_i^p\partial_j G_{mp}
+ w_j^p\partial_iG_{mp}
- w_j^p w_i^q\partial_tG_{pq} W_m^t\Big), \label{iii}
\end{align}
\end{subequations}
where the matrix $[W_m^t]_{t,m=1..n}$ is defined as the inverse of the
matrix field $w:\Omega\to\mathbb{R}^{n\times n}$,
whose columns are the vectors $w_i$, i.e.: $W_m^t =
\big(\sum_{i=1}^n w_i\otimes e_i\big)^{tm}$.
Then we have the following:
\begin{itemize}
\item[(i)] Problem \eqref{trzy} has a solution given by a bilipschitz
  map $\xi \in W^{2,2}(\Omega, U)$ (as in Corollary \ref{boot}),  if and only if
  \eqref{system} has a solution $(\xi, w)$ given by a bilipschitz 
  $\xi:\Omega\to U$ and $w\in W^{1,2} (\Omega,\mathbb{R}^{n\times n})$.
  
\item[(ii)]  
Problem \eqref{system} has a solution, understood as in
(i) above, if and only if \eqref{iii} has a solution. This statement
should be understood in the following sense.

Assume that (\ref{iii}) is solved in the sense of distributions, by the vector fields $w_i\in
L^\infty(\Omega, \mathbb{R}^n)$, $i:1\ldots n$, such that $W_m^t$ are
well defined and $W_m^t\in L^\infty(\Omega)$. Then there exists a
smooth $\xi:\Omega\to U$ such that (\ref{ii}) holds. Moreover, $\xi$ is locally invertible to a smooth
vector field $\zeta$, and the Christoffel symbols of the following
metrics: 
$$\mathcal{G} =
G\circ\zeta \quad \mbox{ and } \quad \tilde{\mathcal{G}}=(\nabla\zeta)^T(\tilde
G\circ\zeta) (\nabla\zeta)$$ 
are the same. If additionally $\xi$ is globally invertible to
$\zeta:U\to\Omega$, and if we have:
\begin{equation}\label{init}
\exists x_0\in\Omega\qquad \tilde G(x_0) = w^T G w (x_0),
\end{equation}
then (\ref{i}) holds in $\Omega$.
\end{itemize}
\end{theorem}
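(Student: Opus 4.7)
For Part (i), the plan is bookkeeping. In the forward direction I would set $w_i := \partial_i\xi$; then \eqref{ii} is definitional, \eqref{i} rewrites \eqref{trzy}, and \eqref{iii} is the content of Corollary \ref{lem2}. The converse is direct: \eqref{ii} together with $w \in W^{1,2}$ forces $\xi \in W^{2,2}$, and \eqref{i} recovers \eqref{trzy}.

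The substance lies in Part (ii), converse direction, which I would divide into three stages. \emph{Stage A (integrating $w$ to $\xi$, with regularity):} Starting from $w_i \in L^\infty$ solving \eqref{iii} distributionally with $W \in L^\infty$, I would first verify the mixed-partial compatibility $\partial_j w_i = \partial_i w_j$ directly from the RHS of \eqref{iii}. The term $w_m^s\tilde\Gamma^m_{ij}$ is $i,j$-symmetric because $\tilde\Gamma$ is symmetric in its lower indices, and the parenthetical combination is also symmetric in $i,j$ (using $G_{pq}=G_{qp}$ for the quadratic-in-$w$ summand). Since the RHS is in $L^\infty$, one has $w \in W^{1,\infty}_{loc}$; simple connectedness of $\Omega$ then yields $\xi$ with $\nabla\xi = w$. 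A routine bootstrap through the smooth data $G,\tilde G$ promotes $\xi$ to $C^\infty$, and pointwise invertibility of $\nabla\xi$ provides a smooth local inverse $\zeta$.

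\emph{Stage B (equality of Christoffel symbols)} is the geometric heart. Rearranging \eqref{cztery} (equivalent to \eqref{iii} via Corollary \ref{lem2}) gives
\[
\partial_m\xi^s\, \tilde\Gamma^m_{ij} \;=\; \partial_{ij}\xi^s + \partial_i\xi^p \partial_j\xi^q\, \Gamma^s_{pq}\!\circ\!\xi,
\]
which is exactly the classical transformation formula expressing that the affine connection $\tilde\Gamma$ on $\Omega$ is the pullback under the local diffeomorphism $\xi$ of the connection $\Gamma$ on $U$. Since pullback of a metric commutes with pullback of its Levi-Civita connection (by uniqueness of the torsion-free metric-compatible connection), on $U$ both metrics $\tilde{\mathcal{G}} = \zeta^*\tilde G$ and $\mathcal{G} = G\circ\zeta$ have Christoffel symbols equal to $\Gamma$, giving the asserted equality.

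\emph{Stage C (recovering \eqref{i})}: Under global invertibility of $\xi$ and under \eqref{init}, evaluating $\tilde{\mathcal{G}}$ at $y_0 := \xi(x_0)$ yields $\tilde{\mathcal{G}}(y_0) = (\nabla\xi(x_0))^{-T}\tilde G(x_0)(\nabla\xi(x_0))^{-1} = G(x_0) = \mathcal{G}(y_0)$. The common Levi-Civita connection from Stage B is metric-compatible with both $\mathcal{G}$ and $\tilde{\mathcal{G}}$, so their difference is a parallel $(0,2)$-tensor on the connected set $U$; vanishing at the single point $y_0$, it must vanish identically, and pulling back to $\Omega$ yields \eqref{i}. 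The main anticipated obstacle is Stage B — reading \eqref{iii} correctly as the pullback-of-connection identity and invoking the fact that Levi-Civita connections commute with pullback; Stages A and C are routine modulo the topological hypothesis that $U$ is connected (inherited from $\Omega$).
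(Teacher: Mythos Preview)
Your proposal is correct, and Parts (i), Stage A, and Stage C track the paper's proof closely (including the appeal to the parallel-tensor/Lemma~\ref{lemchris2} argument for Stage C). The genuine difference is in Stage B. The paper does not invoke naturality of the Levi--Civita connection under pullback; instead it carries out a direct computation: starting from the identity $\partial_j w_i^s = w_m^s\tilde\Gamma_{ij}^m - w_j^p w_i^q\Gamma^s_{pq}$ (the form \eqref{cztery} of \eqref{iii}), it expands $\partial_m\tilde{\mathcal{G}}_{ij}$ explicitly and shows $\partial_m\tilde{\mathcal{G}}_{ij} = \tilde{\mathcal{G}}_{ti}\Gamma_{mj}^t + \tilde{\mathcal{G}}_{tj}\Gamma_{mi}^t$, i.e.\ that $\Gamma$ is metric-compatible with $\tilde{\mathcal{G}}$. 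Then the same antisymmetry trick used for $P_{ij}$ in Lemma~\ref{lem1} forces $\gamma_{ij}^m = \Gamma_{ij}^m$. Your route --- recognizing \eqref{cztery} as precisely the statement $\tilde\Gamma = \xi^*\Gamma$ and then using that $\zeta^*$ carries the Levi--Civita connection of $\tilde G$ to that of $\tilde{\mathcal{G}} = \zeta^*\tilde G$, so $\gamma = \zeta^*\tilde\Gamma = \zeta^*\xi^*\Gamma = \Gamma$ --- is cleaner and more geometric, at the cost of importing the (standard) fact that pullback commutes with the Levi--Civita construction. The paper's computation is more self-contained but longer. Both arrive at the same conclusion.
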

\begin{proof}
Clearly, the equivalence in (i) follows from Lemma \ref{lem1} and
Lemma \ref{lem2}. For the equivalence in (ii) note first that,  
by a bootstrap argument, an $L^\infty$ vector field $w$ satisfying
(\ref{iii}) is automatically smooth in $\Omega$, together with $W$.
Further, in a simply connected domain
$\Omega$, the condition \eqref{ii} is the same as:
$$\forall i,j,t:1\ldots n\qquad \partial_j w_i^t = \partial_i w_j^t,$$
which  is implied by \eqref{iii}, by the symmetry of its right hand
side (in $i,j$). 

We now prove that the metrics ${\mathcal{G}}$ and
$\tilde{\mathcal{G}}$ have the same Christoffel symbols on the
subdomain of $U$ where the local inverse $\zeta$ is defined. Note that
both $\xi$ and $\zeta$ are smooth. We first compute:
\begin{equation}\label{sth1}
\partial_m\Big(\partial_i \zeta^p \tilde{\mathcal{G}}_{ps} \partial_j\zeta^s\Big) = 
\partial_{im} \zeta^p \tilde{\mathcal{G}}_{ps} \partial_j\zeta^s +
\partial_{i} \zeta^p \tilde{\mathcal{G}}_{ps} \partial_{jm}\zeta^s + 
\partial_{i} \zeta^p \partial_{j}\zeta^s \partial_{m} \zeta^q  \partial_q\tilde{\mathcal{G}}_{ps}. 
\end{equation}
By Lemma \ref{lem1}, we also obtain:
$$\partial_j w_i^s = w_m^s\tilde\Gamma_{ij}^m-w_j^p
w_i^q\Gamma^s_{pq},$$ 
where $\Gamma_{ij}^k$ are the Christoffel symbols of ${\mathcal{G}}$.
Now, since: $\partial_p\zeta^s w_i^p = \delta_i^s = \partial_i\zeta^p
w_p^s$, it follows that:
$$\partial_{pq}\zeta^s w_j^q w_i^p = -\partial_p\zeta^s \partial_j w_i^p 
= - \partial_p\zeta^s \Big(w_m^p\tilde\Gamma_{ij}^m - w_j^\alpha
w_i^\beta\Gamma^p_{\alpha\beta}\Big) = - \tilde\Gamma_{ij}^s + \partial_p\zeta^s w_j^\alpha
w_i^\beta\Gamma^p_{\alpha\beta}.$$
Consequently, \eqref{sth1} becomes:
\begin{equation}\label{sth2}
\begin{split}
 \partial_m\tilde{\mathcal{G}}_{ij} & = 
\partial_{im} \zeta^p \tilde{\mathcal{G}}_{ps} \partial_j\zeta^s +
\partial_{i} \zeta^p \tilde{\mathcal{G}}_{ps} \partial_{jm}\zeta^s \\ 
&\qquad \qquad\qquad\quad + 
\partial_{i} \zeta^p \partial_{j}\zeta^s \partial_{m} \zeta^q
\tilde{\mathcal{G}}_{\alpha p} \Big( -\partial_{\gamma
  \delta}\zeta^\alpha w_q^\gamma w_s^\delta +\partial_t\zeta^\alpha
w_q^\gamma w_s^\delta \Gamma_{\gamma\delta}^t\Big) \\ & 
\qquad\qquad\qquad\quad
+ \partial_{i} \zeta^p \partial_{j}\zeta^s \partial_{m} \zeta^q
\tilde{\mathcal{G}}_{\alpha s} \Big( -\partial_{\gamma
  \delta}\zeta^\alpha w_q^\gamma w_p^\delta +\partial_t\zeta^\alpha
w_q^\gamma w_p^\delta \Gamma_{\gamma\delta}^t\Big) \\ &
= \partial_i\zeta^p\partial_t\zeta^\alpha \tilde{\mathcal{G}}_{\alpha
  p}\Gamma_{mj}^t + \partial_j\zeta^s\partial_t\zeta^\alpha \tilde{\mathcal{G}}_{\alpha
  s}\Gamma_{im}^t \\ & = \tilde{\mathcal{G}}_{ti}\Gamma_{mj}^t +
\tilde{\mathcal{G}}_{tj}\Gamma_{mi}^t.
\end{split}
\end{equation}

Call $\gamma_{ij}^k$ the Christoffel symbols of $\tilde{\mathcal{G}}$,
so that: $\partial_m\tilde{\mathcal{G}}_{ij} = \tilde{\mathcal{G}}_{ti}\gamma_{mj}^t +
\tilde{\mathcal{G}}_{tj}\gamma_{mi}^t.$
Therefore:
$$\forall i,j,m:1\ldots n\qquad \tilde{\mathcal{G}}_{tj}(\Gamma_{mi}^t
- \gamma_{mi}^t) = - \tilde{\mathcal{G}}_{ti}(\Gamma_{mj}^t
- \gamma_{mj}^t).$$
Using the same reasoning as in the proof of \eqref{pij}
we get, as claimed:
$$\forall i,j,m:1\ldots n\qquad \Gamma_{ji}^m = \gamma_{ij}^m.$$
This concludes the proof of (ii) in view of Lemma \ref{lemchris2}
below, and since $\tilde{\mathcal{G}}(\xi(x_0)) = \mathcal{G}(\xi(x_0))$.
\end{proof}

\begin{lemma}\label{lemchris2}
Assume that the Christoffel symbols of two smooth metrics 
$\mathcal{G}_1, \mathcal{G}_2$ on a connected domain $U \subset
{\mathbb R}^n$ coincide, and that for some $x_0\in U$,  
$\mathcal{G}_1(x_0)=  \mathcal{G}_2(x_0)$. 
Then $\mathcal{G}_1=  \mathcal{G}_2$.
\end{lemma}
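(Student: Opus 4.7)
The plan is to view the statement as a uniqueness result for a linear first-order PDE system determined by the common Christoffel symbols. Writing $\Gamma_{ij}^k$ for the shared symbols of $\mathcal{G}_1$ and $\mathcal{G}_2$, the metric compatibility identity \eqref{chris}, applied to each metric, gives
\begin{equation*}
\partial_m \mathcal{G}_{\ell, ij} = \mathcal{G}_{\ell, ti}\, \Gamma_{mj}^t + \mathcal{G}_{\ell, tj}\, \Gamma_{mi}^t \qquad \mbox{for } \ell=1,2.
\end{equation*}
Subtracting, the difference $D := \mathcal{G}_1 - \mathcal{G}_2$ satisfies the linear homogeneous system
\begin{equation*}
\partial_m D_{ij} = D_{ti}\, \Gamma_{mj}^t + D_{tj}\, \Gamma_{mi}^t \qquad \mbox{in } U,
\end{equation*}
together with the initial condition $D(x_0)=0$.

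Next, I would reduce this overdetermined PDE system to an ODE along paths. Fix an arbitrary $x \in U$; since $U$ is open and connected it is path-connected, so there exists a smooth curve $\gamma:[0,1]\to U$ with $\gamma(0)=x_0$ and $\gamma(1)=x$. Composing with $\gamma$, the matrix-valued function $t\mapsto D(\gamma(t))$ solves the linear ODE
\begin{equation*}
\frac{d}{dt} D_{ij}(\gamma(t)) = \dot\gamma^m(t)\Big( D_{ti}(\gamma(t))\, \Gamma_{mj}^t(\gamma(t)) + D_{tj}(\gamma(t))\, \Gamma_{mi}^t(\gamma(t))\Big),
\end{equation*}
with initial datum $D(\gamma(0))=0$. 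The coefficient functions are smooth along $\gamma$, and the right-hand side is linear in $D$, so by the standard uniqueness theorem for linear ODEs we conclude $D(\gamma(t))\equiv 0$, and in particular $D(x)=0$. Since $x$ was arbitrary, this gives $\mathcal{G}_1 \equiv \mathcal{G}_2$ on $U$.

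There is no real obstacle here; the only subtle point to verify is that the identity used is indeed a consequence of the Christoffel symbols coinciding (not of an a priori relation between the metrics), which follows by applying \eqref{chris} to each metric separately and using that $\Gamma^{(1)} = \Gamma^{(2)}$ by hypothesis. Connectedness is essential only to guarantee that any target point can be reached by a path from $x_0$; otherwise the conclusion would hold only on the connected component containing $x_0$.
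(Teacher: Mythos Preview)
Your proof is correct. The paper's argument uses the geometric language of parallel transport: since the Christoffel symbols coincide, the two metrics share the same Levi--Civita connection $\nabla$, and since each metric is $\nabla$-parallel, parallel transport along any curve from $x_0$ to $x$ carries $\mathcal{G}_i(x_0)$ to $\mathcal{G}_i(x)$, forcing equality. Your argument is the same idea expressed in elementary terms: the parallel-transport equation for the metric tensor is exactly the linear system you wrote down for $D$, and ODE uniqueness along a curve replaces the appeal to properties of parallel tensor fields. Your version is more self-contained for a reader who only has \eqref{chris} at hand and avoids the references to covariant derivatives and Kobayashi--Nomizu; the paper's version is terser for a reader fluent in Riemannian geometry.
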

\begin{proof}  Let $\nabla_i$ represent the covariant derivative
  associated with the metric $\mathcal{G}_i$ through the Levi-Civita
  connection  \cite[p. 114 and Theorem 2.2, p. 158]{kono}. An
  immediate consequence of  
the equivalence of the Christoffel symbols (in the identity coordinate
system of $U$) is that $\nabla:= \nabla_1 = \nabla_2$
\cite[p. 144]{kono}. By \cite[Proposition 2.1, p. 158] {kono}, we have
$\nabla_i \mathcal{G}_i=0$,  i.e.   
both $\mathcal{G}_1$ and $\mathcal{G}_2$ are parallel tensor fields on
$U$ with respect to $\nabla$ \cite[ pages 88 and 124]{kono}. Let $x\in
U$ and let $\gamma$ be any piece-wise $\mathcal{C}^1$   
curve connecting $x_0$ and $x$ in $U$.  Let $\tau_\gamma$ be the
parallel transport from the point $u_0$ to $u$ along $\gamma$. It
follows that: 
$$ \tau_\gamma (\mathcal G_i(x_0)) = \mathcal{G}_i(x) \qquad \forall i=1,2,$$ 
which concludes the proof. 
\end{proof}

\begin{remark}\label{lemchris}
Assume that the Christoffel symbols of two smooth $2$d metrics 
$\mathcal{G}_1, \mathcal{G}_2$ on a connected domain
$U\subset\mathbb{R}^2$ coincide. Assume 
moreover that  the Gaussian curvature 
of $\mathcal{G}_1$ (and hence of $\mathcal{G}_2$) does not vanish
identically in $U$. Then, one can directly prove that there exists 
a positive constant $\lambda$ such that $\mathcal{G}_1= \lambda
\mathcal{G}_2$ in $U$.

Namely, let $\nabla_i$ represent the covariant derivative associated
with the metric $\mathcal{G}_i$ through the Levi-Civita connection, as
in the proof of Lemma \ref{lemchris2}. Again,
equivalence of the Christoffel symbols implies that $\nabla:= \nabla_1
= \nabla_2$.  Let  $x_0 \in U$ and let  
$H=\mbox{Hol}(U, x_0)$ be the holonomy group associated with $\nabla$
at $x_0$. By \cite{BorelLich}, (see also  \cite[Theorem 392]{berger}),  $H$ is a connected subgroup of the special  
orthogonal group associated with the scalar product
$\mathcal{G}_i(x_0)$. Since $(\Omega, \mathcal{G}_i)$ is not flat,
$H$ cannot be the trivial subgroup.  
The only other possibility (when $n=2$) is that $H$ is the entire
$SO(2, \mathcal {G}_i(x_0))$. This implies that the 
$SO(2, \mathcal {G}_1(x_0))= SO(2, \mathcal {G}_2(x_0))$, which by the
transitivity of the action of the special orthogonal group over the unit
sphere, results in $\mathcal{G}_1(x_0)= \lambda \mathcal{G}_2(x_0)$. 

Note that for $n>2$, the holonomy group $H$ generically coincides with
the full special orthogonal group \cite[p.643]{berger}. In this case again $SO(n, \mathcal
{G}_1(x_0))= SO(n, \mathcal {G}_2(x_0))$ for some $x_0\in U$, and so
the result is established for generic metrics $\mathcal{G}_i$. 
\end{remark}

 

\section{Some remarks and examples}

\begin{remark}
Note that in problem (\ref{trzy}) one can, without loss of generality,
assume that:
\begin{equation}\label{glom}
\tilde G(x_0) = G(x_0) = \mbox{Id}_{n}
\end{equation}
Indeed, denote $\tilde A= \tilde G(x_0)^{-1/2}$ and $A=
G(x_0)^{-1/2}$ the inverses of the unique symmetric square roots of
the metrics at $x_0$, and let $\Omega_1=\tilde A^{-1}(\Omega)$ be the
preimage of $\Omega$ under the linear transformation $x\mapsto \tilde
A x$. Then $\tilde G_1 = \tilde A (\tilde G\circ \tilde A)\tilde A$
and $G_2 = A(G\circ A)A$ are two metrics on $\Omega_1$, that equal
$\mbox{Id}$ at $x_1=\tilde A^{-1} x_0 $. Further, (\ref{trzy}) can be
written as:
\begin{equation}\label{trzyo}
\tilde G_1(x) = \Big(A^{-1}\big(\nabla\xi(\tilde Ax)\big) \tilde A\Big)^T
G_1(x) \Big(A^{-1}\big(\nabla\xi(\tilde Ax)\big) \tilde A\Big) =
\nabla \xi_1 G_1\nabla \xi_1(x),
\end{equation}
where $\xi_1 = A^{-1}\xi\circ \tilde A$. Clearly, existence of
a solution $\xi_1$ to (\ref{trzyo}) on $\Omega_1$ is completely
equivalent with existence of $\xi$ solving (\ref{trzy}) on $\Omega$,
with the same required regularity.

Notice also that, in view of the compensated regularity for $\xi$ in
Corollary \ref{boot}, any solution to (\ref{trzy}) satisfies, up to a
global reflection, the orientation preserving condition
(\ref{orienta}). In view of (\ref{glom}), (\ref{init}) now becomes: $w(x_0)\in SO(n)$.
For the particular case of $w(x_0) = \mbox{Id}$, it is easy to notice
that (\ref{iii}) at $x_0$ reduces to: $\partial_j w_i^s(x_0) =
\tilde\Gamma_{ij}^s - \chi_{ij}^s$, with $\chi_{ij}^s$ denoting the 
Christoffel symbols of the metric $G$ on $\Omega$.
\end{remark}

\begin{remark}\label{amitcalc}
We now present alternative calculations leading to the system
(\ref{i}) -- (\ref{iii}), adapting classical ideas
\cite{sokolnikoff}. These calculations do not require knowledge of the
metric compatibility of the connection.

\smallskip

{\bf 1.} Assume that there exists a bilipschitz map $\xi:\Omega\to
U$, whose global inverse we denote by $\zeta:U\to\Omega$, such that
(\ref{trzy}) holds in $\Omega$. Let $\tilde \Gamma_{ijk}$ the
Christoffel symbols of the first kind of the metric $\tilde G$ on
$\Omega$: 
$$\tilde \Gamma_{ijk} = \frac{1}{2}\big(\partial_i\tilde G_{jk}
+ \partial_j\tilde G_{ik} - \partial_k\tilde G_{ij}\big),$$
while let  $\Gamma_{\alpha\beta\gamma}$ stand for the Christoffel symbols of
the metric $\mathcal{G}=G\circ\zeta$ on $U$:
$$\Gamma_{\alpha\beta\gamma} = \frac{1}{2}\big(\partial_\alpha \mathcal{G}_{\beta\gamma}
+ \partial_\beta \mathcal{G}_{\alpha\gamma} - \partial_\gamma \mathcal{G}_{\alpha\beta}\big).$$
As above, we will use the Latin indices for components of vectors
in $\Omega$, and Greek indices in $U$.

Since by (\ref{trzy}) we have $\tilde G =
(\nabla\xi)^T(\mathcal{G}\circ \xi) (\nabla \xi)$, we obtain:
\begin{equation*}
\begin{split}
\partial_k\tilde G_{ij}
& = \partial_k\Big(\partial_i\xi^\alpha \partial_j\xi^\beta
(\mathcal{G}_{\alpha\beta}\circ \xi)\Big)  \\ & =
\Big( \partial_{ki}\xi^\alpha \partial_j\xi^\beta + \partial_{kj}\xi^\alpha\partial_i\xi^\beta \Big)
(\mathcal{G}_{\alpha\beta}\circ \xi) + \partial_i\xi^\alpha \partial_j\xi^\beta
\Big( (\partial_\gamma \mathcal{G}_{\alpha\beta})\circ \xi\Big) \partial_k\xi^\gamma.
\end{split}
\end{equation*}
Similarly, exchanging the following pairs of indices: $\gamma$ with
$\alpha$ and $\beta$ with $\gamma$ in the first equation, and $\gamma$
with $\beta$ in the second equation below, we get:
\begin{equation*}
\begin{split}
\partial_i\tilde G_{jk} &  = 
\Big( \partial_{ij}\xi^\alpha \partial_k\xi^\beta + \partial_{ik}\xi^\alpha\partial_j\xi^\beta \Big)
(\mathcal{G}_{\alpha\beta}\circ \xi) + \partial_i\xi^\alpha \partial_j\xi^\beta
\Big( (\partial_\alpha \mathcal{G}_{\gamma\beta})\circ
\xi\Big) \partial_k\xi^\gamma, \\
\partial_j\tilde G_{ik} &  = 
\Big( \partial_{ij}\xi^\alpha \partial_k\xi^\beta + \partial_{jk}\xi^\alpha\partial_i\xi^\beta \Big)
(\mathcal{G}_{\alpha\beta}\circ \xi) + \partial_i\xi^\alpha \partial_j\xi^\beta
\Big( (\partial_\beta \mathcal{G}_{\alpha\gamma})\circ
\xi\Big) \partial_k\xi^\gamma.
\end{split}
\end{equation*}
Consequently:
\begin{equation}\label{a1}
\tilde \Gamma_{ijk} = \partial_i\xi^\alpha \partial_j\xi^\beta \partial_k\xi^\gamma
\Big(\Gamma_{\alpha\beta\gamma} \circ \xi\Big)
+ \partial_{ij}\xi^\alpha \partial_k\xi^\beta \Big(\mathcal{G}_{\alpha\beta}\circ \xi\Big). 
\end{equation}

\smallskip

{\bf 2.} We now compute the Christoffel symbols of the second kind
for the metric $\tilde G$:
$$\tilde \Gamma_{ij}^k = \tilde G^{km}\tilde\Gamma_{ijm}.$$
Since $\tilde G^{-1} = \big((\nabla\zeta) \mathcal{G}^{-1}
(\nabla \zeta)^T\big)\circ\xi$, it follows that: $\tilde G^{km} =
(\partial_\alpha\zeta^k\partial_\beta\zeta^m\mathcal{G}^{\alpha\beta})\circ\xi$. By
(\ref{a1}) we get:
\begin{equation}\label{pomoc}
\begin{split}
\tilde\Gamma_{ij}^k &  = 
\Big((\partial_{\rho}\zeta^k \partial_\nu\zeta^m\mathcal{G}^{\rho\nu})\circ
\xi\Big) \partial_{i}\xi^\alpha\partial_j\xi^\beta \partial_m\xi^\gamma
(\Gamma_{\alpha\beta\gamma}\circ \xi)
+ \Big((\partial_{\rho}\zeta^k \partial_\nu\zeta^m\mathcal{G}^{\rho\nu})\circ
\xi\Big) (\mathcal{G}_{\alpha\beta}\circ\xi) \partial_{ij}\xi^\alpha \partial_m\xi^\beta
\\ &  = \Big((\mathcal{G}^{\rho\gamma}\Gamma_{\alpha\beta\gamma})\circ\xi\Big) 
\partial_{i}\xi^\alpha\partial_j\xi^\beta (\partial_\rho\zeta^k \circ
\xi) + \Big((\mathcal{G}^{\rho\beta}\mathcal{G}_{\alpha\beta})\circ\xi\Big) \partial_{ij}\xi^\alpha
(\partial_{\rho}\zeta^k\circ\xi)  \\ &
= (\Gamma_{\alpha\beta}^\rho\circ \xi) \partial_{i}\xi^\alpha\partial_j\xi^\beta (\partial_\rho\zeta^k \circ
\xi) + (\partial_{\alpha}\zeta^k\circ\xi) \partial_{ij}\xi^\alpha,
\end{split}
\end{equation}
where we contracted $\partial_m\xi^\gamma
(\partial_\nu\zeta^m\circ\xi)$ to the Kronecker delta
$\delta_\nu^\gamma$ and $\partial_m\xi^\beta (\partial_\nu\zeta^m\circ\xi)$ to
$\delta_\nu^\beta$ in the second equality, and
$\mathcal{G}^{\rho\beta}\mathcal{G}_{\alpha\beta}$ to
$\delta_\alpha^\rho$ in the third equality, where we also used the
definition of the Christoffel symbols $\Gamma_{\alpha\beta}^\rho$ of
the second kind of the metric $\mathcal{G}$. Further:
\begin{equation*}
\begin{split}
\tilde\Gamma_{ij}^k\partial_k\xi^\mu & =
(\partial_{\alpha}\zeta^k\circ\xi) \partial_k\xi^\mu \partial_{ij}\xi^\alpha
+ (\Gamma_{\alpha\beta}^\rho\circ \xi) \partial_k\xi^\mu \partial_{i}\xi^\alpha\partial_j\xi^\beta
(\partial_\rho\zeta^k \circ \xi)   \\ & =
\delta_\alpha^\mu \partial_{ij}\xi^\alpha + (\Gamma_{\alpha\beta}^\rho\circ \xi)
\delta_\rho^\mu \partial_{i}\xi^\alpha\partial_j\xi^\beta,
\end{split}
\end{equation*}
which implies the same formula as in (\ref{cztery}):
\begin{equation}\label{41}
\partial_{ij}\xi^\mu = \tilde\Gamma_{ij}^k\partial_k\xi^\mu - (\Gamma_{\alpha\beta}^\mu\circ \xi)
\partial_{i}\xi^\alpha\partial_j\xi^\beta,
\end{equation}

\smallskip

{\bf 3.} We now proceed as in Corollary \ref{lem2}:
\begin{equation*}
\begin{split}
\Gamma_{\alpha\beta}^\mu &  = 
\frac{1}{2}\mathcal{G}^{\mu\gamma}\Big(\partial_{\alpha} \mathcal{G}_{\beta\gamma} 
+ \partial_{\beta} \mathcal{G}_{\alpha\gamma} - \partial_{\gamma}
\mathcal{G}_{\alpha\beta} \Big) \\ & =
\frac{1}{2}\mathcal{G}^{\mu\gamma}\Big(
(\partial_sG_{\beta\gamma}\circ \zeta)\partial_\alpha\zeta^s + 
(\partial_rG_{\alpha\gamma}\circ \zeta)\partial_\beta\zeta^r -
(\partial_qG_{\alpha\beta}\circ \zeta)\partial_\gamma\zeta^q\Big).
\end{split}
\end{equation*}
Consequently, and in view of (\ref{41}):
\begin{equation*}
\begin{split}
\tilde \Gamma_{ij}^k\partial_k\xi^\mu - \partial_{ij}\xi^\mu & =
(\Gamma_{\alpha\beta}^\mu \circ\xi) \partial_i\xi^\alpha\partial_j\xi^\beta \\
& = \frac{1}{2}(\mathcal{G}^{\mu\gamma}\circ\xi) \Big(
\partial_sG_{\beta\gamma} (\partial_\alpha\zeta^s \circ\xi) \partial_i\xi^\alpha\partial_j\xi^\beta  + 
\partial_rG_{\alpha\gamma} (\partial_\beta\zeta^r
\circ\xi) \partial_i\xi^\alpha\partial_j\xi^\beta \\ &
\qquad\qquad\qquad\qquad\qquad \qquad\qquad\qquad  \quad -
\partial_qG_{\alpha\beta} (\partial_\gamma\zeta^q\circ
\xi) \partial_i\xi^\alpha\partial_j\xi^\beta \Big) \\ &
= \frac{1}{2} {G}^{\mu\gamma} \Big(
\partial_sG_{\beta\gamma} \delta_i^s \partial_j\xi^\beta  + 
\partial_rG_{\alpha\gamma} \delta_j^r \partial_i\xi^\alpha -
\partial_qG_{\alpha\beta} W_\gamma^q \partial_i\xi^\alpha\partial_j\xi^\beta \Big), 
\end{split}
\end{equation*}
which directly implies (\ref{iii}). \endproof
\end{remark}

\begin{remark}
We further observe that if (\ref{iii}) holds, then defining
$\xi$ by (\ref{ii}) we may again obtain (\ref{pomoc}) simply by
reversing steps in Remark \ref{amitcalc}. Letting $\bar G = 
(\nabla\xi)^T(\mathcal{G}\circ \xi) (\nabla \xi)$ and going through
the same calculations, the formula (\ref{pomoc}) follows,
this time for the Christoffel symbols $\bar \Gamma^k_{ij}$ of $\bar
G$, i.e.:
$$ \bar \Gamma_{ij}^k = (\Gamma_{\alpha\beta}^\rho\circ
\xi) \partial_{i}\xi^\alpha\partial_j\xi^\beta (\partial_\rho\zeta^k
\circ \xi) + (\partial_{\alpha}\zeta^k\circ\xi) \partial_{ij}\xi^\alpha. $$ 
Hence, we see that $ \tilde \Gamma_{ij}^k  = \bar \Gamma_{ij}^k$ in  $
\Omega$ and again, in view of Lemma \ref{lemchris2}, it follows that $\tilde G = \bar
G = (\nabla\xi)^T G (\nabla \xi)$ in  $\Omega$, provided that
we have this identity at a given point $x_0$, as required in (\ref{init}).
\end{remark}

\begin{example}\label{ex4.4}
Assume that $G$ is constant. Then the equations in (\ref{iii}) become:
\begin{equation}\label{iii1}
\partial_jw_i^s = w^s_m\tilde\Gamma^m_{ij},
\end{equation}
whereas the Thomas condition (\cite{thom}, see also next section) 
for the above system of differential equations is:
$$w_m^s\partial_k\tilde\Gamma^m_{ij} +
w^s_p\tilde\Gamma^p_{km}\tilde\Gamma^m_{ij} = w_m^s\partial_j\tilde\Gamma^m_{ik} +
w^s_p\tilde\Gamma^p_{jm}\tilde\Gamma^m_{ik} \qquad \forall
i,j,s,k:1\ldots n.$$
Equivalently, the following should hold: 
$$w_m^s\Big(\partial_k\tilde\Gamma^m_{ij} - \partial_j\tilde\Gamma^m_{ik} +
\tilde\Gamma^m_{kp}\tilde\Gamma^p_{ij} -
\tilde\Gamma^m_{jp}\tilde\Gamma^p_{ij} \Big) = 0 \qquad 
\forall i,j,s,k:1\ldots n,$$
at all points $x$ in a neighborhood of $x_0$, and for all $w$ in a
neighborhood of a given invertible
$w_0\in\mathbb{R}^{n\times n}$ which satisfies $\tilde G(x_0) = w_0^TG w_0$.
Since the expression in parentheses above equals $\tilde
R^m_{ijk}(x)$, it follows that the Thomas condition for (\ref{iii1})
is precisely the vanishing of the whole Riemann curvature tensor
$\tilde R^\cdot_{\cdot\cdot\cdot}$ of the metric $\tilde G$, or
equivalently that $\tilde G$ be immersible in $\mathbb{R}^n$.

On the other hand, letting $A=\sqrt{G}$ denote the unique positive
definite symmetric square root of the matrix $G$, we see that the
problem (\ref{trzy}) becomes:
$$\tilde G = (\nabla\xi)^T G \nabla \xi = (A\nabla\xi)^T
(A\nabla\xi) = (\nabla\xi_1)^T \nabla \xi_1,$$
with $\xi_1(x) = A\xi(x)$. Since $A$ is invertible, we easily deduce
that (\ref{trzy}) has a solution iff $\tilde G$ is immersible in $\mathbb{R}^n$. 
Consequently, in this example the Thomas condition for (\ref{iii}) is necessary and sufficient
for solvability of  (\ref{trzy}). 
\end{example}

\begin{example}\label{ex4.5}
Assume that $G(x) = \mu(x)\mbox{Id}_3$. Then the problem (\ref{trzy}) becomes:
\begin{equation}\label{trzy1} 
(\nabla \xi)^T\nabla \xi = \frac{1}{\mu}\tilde G = e^{2f}\tilde G,
\quad\mbox{ with } f=-\frac{1}{2}\log\mu.
\end{equation}
Solution to (\ref{trzy1}) exists if and only if the Ricci curvature of the metric
$e^{2f}\tilde G$, which is conformally equivalent to the metric $\tilde G$, is equal to
$0$ in $\Omega$. More precisely, denoting: $\nabla^2_{\tilde G} f = [\partial_{ij}f -
\tilde\Gamma_{ij}^k\partial_kf]_{i,j:1..3}\in\mathbb{R}^{3\times 3}$,
$\Delta_{\tilde G} f = \tilde G^{jk} \partial_{jk} f  - \tilde
G^{jk}\tilde\Gamma_{jk}^l\partial_lf\in\mathbb{R}$, and 
$\|\nabla f\|^2_{\tilde G} = \tilde G^{jk}\partial_j
f \partial_kf\in\mathbb{R}$, the condition reads:
\begin{equation}\label{trzy2} 
0 = \mbox{Ric}(e^{2f}\tilde G) = \mbox{Ric}(\tilde G) -
\Big(\nabla^2_{\tilde G} f - \nabla f\otimes \nabla f\Big) -
\Big(\Delta_{\tilde G} f + \|\nabla f\|^2_{\tilde G}\Big)\tilde G.
\end{equation}

When also $\tilde G = \lambda(x)\mbox{Id}_3$, then (\ref{trzy2}) after setting $h =
\frac{1}{2}\log\frac{\lambda}{\mu}$, reduces to:
\begin{equation}
\label{h-equ}  0 = - \big(\nabla^2 h - \nabla h\otimes \nabla h \big) -
\big(\Delta h + \|\nabla h\|^2\big)\mbox{Id}_3. 
\end{equation}
An immediate calculation shows that the only solutions of \eqref{h-equ} are:
$$ h(x)= -2 {\rm log}|x-a| + c \quad \mbox{ or } \quad  h= c, $$ 
with arbitrary constants $c\in \R$ and $a\in \R^n$. 
Indeed, in this case  the solutions to \eqref{trzy1} are conformal,
and so by Liouville's theorem they are given by the M\"obius
transformations in $\mathbb R^3$, as
compositions of rotations, dilations, inversions and translations of  the form:
$$ \xi(x)=  b +  \alpha \frac{ R(x- a)}{|x-a|^\beta},  \qquad R\in
SO(3), ~~a,b\in \R^3, ~~ \alpha\in \R, ~~\beta \in\{0, 2\}. $$
\end{example}

\begin{example}\label{lambda-mu}
In dimension $n=2$, existence of a solution to (\ref{trzy1})  is
equivalent to the vanishing of the Gauss curvature:
\begin{equation}\label{name}
0 = \kappa(e^{2f}\tilde G) = e^{-2f}\big(\kappa(\tilde G) -
\Delta_{\tilde G} f\big),
\end{equation}
which further becomes: $\kappa(\tilde G) = \Delta_{\tilde G}f$. 
When also $\tilde G = \lambda(x)\mbox{Id}_3$, this reduces to: $\Delta
h = 0$, which is also equivalent to the following compatibility of the
Gauss curvatures of $G$ and $\tilde G$:
$\frac{\kappa(\lambda\mbox{Id}_2)}{\kappa(\mu\mbox{Id}_2)} = \frac{\mu}{\lambda}.$
\end{example}


\section{Some further remarks on systems of total differential equations}\label{total_remarks}
 
Systems of total differential equations have been extensively studied
in the literature \cite{thom, kah, B, Eisenhart, veblen}. They are over-determined
systems  in the unknown $w:  \Omega\to \R^N$, of the form:
\begin{equation}\label{total-1}
\partial_i w (x)= f_i(x,w(x)), \qquad \forall i=1\ldots n\quad\forall x\in\Omega,
\end{equation} 
where all the partial derivatives of $w$ are given by functions 
$f_i:\Omega \times \R^N \to \R^N$. Note that if the latter are assumed of sufficient regularity, the uniqueness of solutions
to (\ref{total-1}) with a given initial data $w(x_0) = w_0$ is 
immediate. For existence, observe that 
any  solution must satisfy the compatibility conditions:
$$ \partial_{ij} w = \partial_{ji} w. $$
This leads, under sufficient regularity assumptions, to the necessary condition for existence of $w$:
\begin{equation}\label{comp1}
\Big(\frac{\partial f_i}{\partial x_j}  - \frac{\partial f_j}{\partial
    x_i} + \big(\frac{\partial f_i}{\partial w} \big)   f_j -
  \big(\frac{\partial f_j}{\partial w} \big) f_i \Big) (x, w(x))=0,
  \qquad \forall i,j:1\ldots n \quad \forall x\in\Omega.
\end{equation} 
The advantage of the system  \eqref{comp1} is that it does not
involve any partial derivatives of the a-priori unknown solution $w$, and hence, if certain
conditions are satisfied,  it can be used to obtain the candidates for $w$ by solving for 
$w(x)$ at each $x\in \Omega$. 

Naturally, the more solutions \eqref{comp1} has, the more there is a chance to
find a solution to \eqref{total-1}. A plausible strategy, 
possibly adaptable as practical numerical schemes,  
is to find all the candidates $w$ from \eqref{comp1} and check
whether they satisfy  \eqref{total-1}. This insight, combined with
the observation about the over-determination of the original  
system, implies its rigidity, and leads to non-existence of solutions in generic situations. 
On the other hand, the ideal situation 
is to have:
\begin{equation}\label{Thomas-1}
\frac{\partial f_i}{\partial x_j}  - \frac{\partial f_j}{\partial
    x_i} + \big(\frac{\partial f_i}{\partial w} \big)   f_j -
  \big(\frac{\partial f_j}{\partial w} \big) f_i \equiv 0,
  \qquad \forall i,j:1\ldots n.
\end{equation} 
satisfied for all $x\in \Omega$ and all $w\in \R^N$. All functions $w
: \Omega \to \R^N$ obtained this way are solutions to
 \eqref{comp1} and, as shown in \cite{thom}, 
this leads to existence of a family of local solutions to
\eqref{total-1}, parametrized by the initial values at a
given point $x_0\in \Omega$.  If \eqref{Thomas-1} is not satisfied,
one comes short of having an ample set of solutions $w$ and
might be content for other intermediate scenarios, where solutions to
\eqref{total-1} may still exist but with less liberty in
choosing initial values. 

In section \ref{equivsec} we showed that \eqref{trzy} is equivalent to
a system of total differential equations. Note that
even though our problem
shares some familiar features with the isometric immersion problem,
it is of a fundamentally different nature in as much as we cannot establish the equivalence of \eqref{total-1}  
and \eqref{Thomas-1}. Not being able to {close} the system
\eqref{comp1} as in the isometric immersion case, in order to find necessary and
sufficient conditions, we need to study the above mentioned possible  
intermediate scenarios when the Thomas condition \eqref{Thomas-1} is
not satisfied.  We will carry out this plan in this sections \ref{sec-duda} and \ref{suffisec}.
Below, we begin by a simple example of \eqref{total-1},  
whose conditions \eqref{Thomas-1} are far from optimal.

\begin{example}
 For $n=2, N=1$, consider the system: 
\begin{equation}\label{total} 
\nabla w= w^2 \vec a + w \vec b + \vec c \qquad \mbox{ in } \Omega\subset\mathbb{R}^2,
\end{equation}
where $\vec a, \vec b: \Omega \to \R^2$ are given
smooth vector fields and $\vec c \in \R^2$. In order to find the Thomas
condition \eqref{Thomas-1} for existence of a solution $w: \Omega \to \R$, we take:
$$ 0 = {\rm curl} ( w^2 \vec a + w \vec b+ \vec c) = w^2 {\rm curl}~
\vec a  +  2w \langle\nabla^\perp w, \vec a\rangle  + w {\rm curl} ~\vec b +
\langle\nabla^\perp w, \vec b  \rangle.$$
Substituting \eqref{total}, we obtain the counterpart of (\ref{comp1})
in the present case:
\begin{equation}\label{comp2} 
( {\rm curl}~  \vec a + \langle \vec a, \vec b^\perp\rangle) w^2 +
({\rm curl}~  \vec b) w = 0. 
\end{equation} 
The satisfaction of the above in the $(x, w)$ space is
precisely the condition \eqref{Thomas-1}:
\begin{equation}\label{Thomase}
{\rm curl} ~ \vec a + \langle a, \vec b^\perp \rangle\equiv 0  \quad
\mbox{and}\quad {\rm curl} ~\vec b\equiv 0 \quad \mbox{ in } \Omega. 
\end{equation}
\end{example}

\begin{lemma} \label{obs2}
Condition (\ref{Thomase}) is not necessary for the existence of solutions for (\ref{total}). 
Also, existence of a solution to (\ref{comp2}) does not
guarantee the existence of a solution to (\ref{total}). 
\end{lemma}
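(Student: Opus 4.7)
My plan is to establish both assertions by constructing explicit smooth counterexamples on $\Omega\subset\R^2$, chosen so that the relevant algebraic conditions can be verified by direct computation.

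For the first claim, the idea is to prescribe a solution $w$ first and then reverse-engineer $\vec a,\vec b$ so that (\ref{total}) holds identically while (\ref{Thomase}) fails. I would take $w(x_1,x_2)=x_1$ and $\vec c=(1,0)$, so that (\ref{total}) reduces to the algebraic constraint $x_1^2\vec a+x_1\vec b\equiv 0$, i.e.\ $\vec b=-x_1\vec a$. Choosing $\vec a(x)=(0,x_2)$ then gives $\vec b(x)=(0,-x_1x_2)$, and a short calculation yields ${\rm curl}\,\vec a=0$, $\langle\vec a,\vec b^\perp\rangle=0$, while ${\rm curl}\,\vec b=-x_2\not\equiv 0$. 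Hence the second part of (\ref{Thomase}) is violated, yet $w=x_1$ is a genuine solution of (\ref{total}) on all of $\R^2$.

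For the second claim, the strategy is to arrange that (\ref{comp2}) admits a nontrivial pointwise solution but (\ref{total}) is inconsistent for differential reasons. I would try $\vec a(x)=(0,x_1)$, $\vec b(x)=(0,-x_1^2/2)$, $\vec c=(1,0)$, which yields ${\rm curl}\,\vec a+\langle\vec a,\vec b^\perp\rangle=1$ and ${\rm curl}\,\vec b=-x_1$; then (\ref{comp2}) reads $w^2-x_1w=0$, solved by both $w\equiv 0$ and $w(x)=x_1$. On the other hand, (\ref{total}) becomes $\partial_1w=1$ and $\partial_2w=w^2x_1-wx_1^2/2$. The first equation forces $w(x_1,x_2)=x_1+f(x_2)$ for some function $f$; substituting into the second and expanding, the right-hand side becomes $x_1^3/2+(3/2)x_1^2f+x_1f^2$, which contains an irreducible $x_1^3/2$ term that cannot be absorbed into the $x_1$-independent left-hand side $f'(x_2)$. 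Hence (\ref{total}) has no solution while (\ref{comp2}) has several.

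The argument is purely computational and I do not anticipate a serious technical obstacle. The conceptual point both examples illustrate is the gap between the pointwise algebraic condition (\ref{comp2})---which records only a single scalar consequence of the identity ${\rm curl}\,\nabla w=0$ after eliminating derivatives---and the genuine Thomas-type integrability conditions for the over-determined system (\ref{total}), which are much more restrictive.
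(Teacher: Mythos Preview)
Your proof is correct and follows the same strategy as the paper: construct explicit smooth data for which the claims can be verified by direct computation.

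The specific examples differ. For the first claim, the paper gives a general recipe: take any positive smooth $w$, any non-curl-free $\vec b$, set $\vec c=0$, and define $\vec a = w^{-2}(\nabla w - w\vec b)$; then (\ref{total}) holds by construction while (\ref{Thomase}) fails. Your explicit choice $w=x_1$, $\vec a=(0,x_2)$, $\vec b=(0,-x_1x_2)$, $\vec c=(1,0)$ works equally well and is slightly more concrete. For the second claim, the paper uses $\vec a(x)=x$, $\vec b(x)=x^\perp$ on a domain away from the origin, so that (\ref{comp2}) becomes $|x|^2w^2+2w=0$ with only the two smooth solutions $w\equiv 0$ and $w=-2/|x|^2$; it then checks that neither satisfies (\ref{total}) when $\vec c\neq 0$. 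Your argument instead forces $w=x_1+f(x_2)$ directly from the first component of (\ref{total}) and shows the second component is inconsistent. Both routes are valid; the paper's has the mild conceptual advantage of exploiting the fact that any solution of (\ref{total}) must lie among the (finitely many) solutions of (\ref{comp2}), while yours bypasses this and argues the inconsistency of (\ref{total}) from scratch.
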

\begin{proof}
{\bf 1.} Let $w:\Omega \to \R$ be any positive smooth function. Let $\vec c
=0$ and let $\vec b : \Omega \to \R^2$ be a vector field which is not
curl free. Define:
$$ \vec a= \frac{1}{w^2} (\nabla w- w \vec b). $$
Then \eqref{total} is satisfied but not \eqref{Thomase}. 

\smallskip

{\bf 2.} Let $\vec a, \vec b: \Omega \to \R^2$ be two smooth vector
fields such that the first condition in (\ref{Thomase}) does not hold
at any point $x\in \Omega$. For example, one may take:
$$\vec a(x)= x \quad \mbox{ and } \quad \vec b(x)=  x^\perp,  $$
in any domain $\Omega$ which avoids $0$ in its closure. 
Then \eqref{comp2} becomes:
$$ |x|^2 w^2(x) + 2  w(x) =0, $$
and it has only two smooth solutions: $w \equiv 0$ and $w(x) = -2/|x|^2$. On the other
hand:
$$ \nabla (-\frac{2}{|x|^2})  = \frac{4}{|x|^4}x \neq - \big(\frac{2}{|x|^2}\big)^2 x - \frac{2}{|x|^2}
x^\perp + \vec c,$$
so none of these functions is a solution to (\ref{total}) when
$\vec c\neq 0$.
\end{proof}

Further, observe that augmenting \eqref{total} to a system of (decoupled) total differential equations:
\begin{equation}\label{totalsys}
\nabla w_I = w_I^2 \vec a_I + w_I \vec b_I + \vec c_I, \qquad I=1\ldots N,
\end{equation}
gives the condition:
\begin{equation}\label{pre-Thomas-sys} 
(  {\rm curl} ~ \vec a_I + \langle\vec a_I, \vec b_I^\perp\rangle  ) w_I^2
+  ({\rm curl} ~ \vec b_I) w_I =0 \qquad  \forall I=1\ldots N,
\end{equation} 
resulting in:
\begin{equation}\label{Thomas-sys} 
{\rm curl}  ~\vec a_I + \langle a_I, \vec b_I^\perp\rangle \equiv 0  \quad
\mbox{ and }\quad {\rm curl}~  \vec b_I \equiv 0,  \qquad \mbox{in }  \Omega. 
\end{equation}
As in Lemma \ref{obs2},  we can set up the
data such that, there exists a solution $ w_1(x)$ to the first
equation in the system  \eqref{pre-Thomas-sys} and that all the equations in
\eqref{Thomas-sys} for $I=2\ldots N$ are satisfied in such a manner
that \eqref{totalsys} still has no solutions.    

\section{The Thomas condition in the $2$-dimensional case of (\ref{trzy})}\label{sec-duda}

In this section, we assume that $n=2$ and follow the approach of
\cite{duda}. Recalling \eqref{equ-rotation} as the equivalent form of 
\eqref{trzy}, we note that it has a solution (on a simply connected $\Omega$) if and only if:
\begin{equation}\label{6.1}
 {\rm curl} \, (G^{-1/2} R \tilde G^{1/2}) \equiv 0 \quad \mbox{ in } \Omega,
\end{equation} 
for some rotation valued field $R :\Omega \to SO(2)$.
Denote $V= G^{1/2}$, $\tilde V= \tilde G^{1/2}$ and represent $R$ by a function
$\theta:\Omega \to \R$, so that $R(x)= R(\theta(x))$ is the rotation of angle
$\theta$. Note that:
$$ \partial_j R(\theta) = (\partial_j \theta)  R W,  \quad \mbox{
  where} \quad W=R(\frac{\pi}{2}) = \left[\begin{array}{cc}0&-1\\1&0\end{array}\right].$$ 
Since $W$ is a rotation, it commutes with all $R \in SO(2)$. Also:
$W^T = W^{-1} = -W$, and:
\begin{equation}\label{uno0} 
\forall F \in GL(2) \qquad WFW= -{\rm cof}\, F = -(\det F) F^{-1} .
\end{equation}

\smallskip

We finally need to recall the conformal--anticonformal decomposition of $2\times 2$
matrices. Let $\mathbb{R}^{2\times 2}_c$ and $\mathbb{R}^{2\times 2}_a$ denote,
respectively,  the spaces of conformal and anticonformal matrices:
$$\mathbb{R}^{2\times 2}_c =
\left\{\left[\begin{array}{cc} a& b\\ -b& a\end{array}\right]; ~ a,b\in\mathbb{R}\right\},
\qquad \mathbb{R}^{2\times 2}_a 
= \left\{\left[\begin{array}{cc} a& b\\ b& -a\end{array}\right]; ~ a,b\in\mathbb{R}\right\}.$$
It is easy to see that $\mathbb{R}^{2\times 2} =
\mathbb{R}^{2\times 2}_c \oplus \mathbb{R}^{2\times 2}_a$ because both
spaces have dimension $2$ and they are mutually orthogonal: $A:B = 0$ for all
$A\in  \mathbb{R}^{2\times 2}_c$ and $B\in \mathbb{R}^{2\times 2}_a$. 

For $F=[F_{ij}]_{i,j:1,2}\in\mathbb{R}^{2\times 2}$, its projections
$F^c $ on $\mathbb{R}^{2\times 2}_c$, and $F^a$ on $\mathbb{R}^{2\times 2}_a$ are:
$$F^c= \frac{1}{2}
\left[\begin{array}{cc} F_{11} + F_{22} & F_{12} - F_{21}\\ F_{21} -
    F_{12} & F_{11} + F_{22} \end{array}\right],
\qquad 
F^a= \frac{1}{2}
\left[\begin{array}{cc} F_{11} - F_{22} & F_{12} + F_{21}\\ F_{12} +
    F_{21} & F_{22} - F_{11} \end{array}\right].$$
We easily obtain that:
\begin{equation}\label{uno1} 
F= F^c + F^a, \quad |F|^2 = |F^c|^2 + |F^a|^2 \quad \mbox{ and } \quad \det F= 2( |F^c|^2 - |F^a|^2).
\end{equation}
Also:
\begin{equation}\label{uno2} 
\forall R \in SO(2) \qquad F^cR = RF^c  \quad \mbox{and} \quad R^TF^{a} = F^{a}R. 
\end{equation}

\begin{lemma}\label{lemduda}
The following system of total differential equations in $\theta$ is
equivalent with (\ref{trzy}):
\begin{equation}\label{equ-theta}
\nabla \theta =  \frac{1} {\det \tilde V} \tilde V  {\rm curl}\, \tilde V + 
 \frac{1} {\det \tilde V} \tilde V  \Big ( A_1 \tilde V   e_2 - A_2 \tilde V   e_1 \Big ) 
+  \frac{1} {\det \tilde V} \tilde VR(-2\theta)  \Big ( B_1 \tilde V   e_2 - B_2 \tilde V   e_1\Big ), 
\end{equation} 
where we have denoted:
\begin{equation}\label{AjBj} 
A_i= (V\partial_i  V^{-1})^c, \qquad  B_i= (V \partial_i  V^{-1})^{a}
\qquad \forall i=1,2. 
\end{equation} 
\end{lemma}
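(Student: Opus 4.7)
The plan is to translate \eqref{trzy} into a Poincaré-type integrability statement for the matrix field $V^{-1}R(\theta)\tilde V$, where $V=G^{1/2}$ and $\tilde V=\tilde G^{1/2}$, and then to compute this integrability condition explicitly in a way that isolates $\nabla\theta$. In the simply connected $\Omega$, solving \eqref{trzy} is equivalent, via \eqref{equ-rotation}, to finding $\theta:\Omega\to\mathbb{R}$ such that $V^{-1}R(\theta)\tilde V$ is row-wise curl-free (so that it is a gradient). The entire task therefore reduces to showing that this vanishing-curl condition is algebraically equivalent to \eqref{equ-theta}.

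Expanding
$$\partial_1(V^{-1}R\tilde V)e_2-\partial_2(V^{-1}R\tilde V)e_1=0$$
by the product rule, using $\partial_j R(\theta)=(\partial_j\theta)R(\theta)W$, and then left-multiplying by $V$, I obtain
$$\bigl[V(\partial_1 V^{-1})R\tilde V\bigr]e_2-\bigl[V(\partial_2 V^{-1})R\tilde V\bigr]e_1 + RW\tilde V\bigl[(\partial_1\theta)e_2-(\partial_2\theta)e_1\bigr] + R\bigl[(\partial_1\tilde V)e_2-(\partial_2\tilde V)e_1\bigr]=0.$$
The bracketed factor in the third term is $W\nabla\theta$, so that summand becomes $RW\tilde V W\nabla\theta$; since $\tilde V$ is symmetric, identity \eqref{uno0} gives $W\tilde V W=-(\det\tilde V)\tilde V^{-1}$. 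The last bracket is by definition ${\rm curl}\,\tilde V$. Isolating the resulting $(\det\tilde V)R\tilde V^{-1}\nabla\theta$ term, left-multiplying by $\tilde V R^T$, and dividing by $\det\tilde V$, yields
$$(\det\tilde V)\nabla\theta=\tilde V R^T V(\partial_1 V^{-1})R\tilde V e_2-\tilde V R^T V(\partial_2 V^{-1})R\tilde V e_1+\tilde V\,{\rm curl}\,\tilde V.$$

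At this point the equation already characterizes solvability, but to match the form \eqref{equ-theta} I invoke the conformal--anticonformal decomposition $V\partial_j V^{-1}=A_j+B_j$ of \eqref{AjBj}. The commutation rules \eqref{uno2} give $R^T A_j R=A_j$ (conformal matrices commute with $R$) and $R^T B_j R=B_j R(2\theta)=R(-2\theta)B_j$ (the second equality being a second application of \eqref{uno2}), whence
$$\tilde V R^T V(\partial_j V^{-1})R\tilde V e_k = \tilde V A_j\tilde V e_k+\tilde V R(-2\theta)B_j\tilde V e_k.$$
Substituting into the displayed identity above and dividing by $\det\tilde V$ produces \eqref{equ-theta} verbatim.

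Equivalence with \eqref{trzy} is built into the derivation, since every step is reversible: any $\theta$ solving \eqref{equ-theta} makes $V^{-1}R(\theta)\tilde V$ row-wise curl-free, and the Poincaré lemma on the simply connected $\Omega$ furnishes $\xi$ with $\nabla\xi=V^{-1}R(\theta)\tilde V$, so that $(\nabla\xi)^T G\nabla\xi=\tilde V R^T R\tilde V=\tilde G$. The only subtlety worth flagging is the use of simple connectedness twice — once to lift a given $R:\Omega\to SO(2)$ arising from a solution $\xi$ to a single-valued angle $\theta$, and once to recover $\xi$ from its gradient. There is no genuine obstacle in the computation itself; the decisive algebraic moves are the symmetric-matrix identity $W\tilde V W=-(\det\tilde V)\tilde V^{-1}$, which collapses the integrability obstruction into an explicit linear equation for $\nabla\theta$, and the conformal/anticonformal splitting, which is precisely what separates the $\theta$-independent conformal part from the $R(-2\theta)$-modulated anticonformal part in the right-hand side of \eqref{equ-theta}.
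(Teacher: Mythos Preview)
Your proof is correct and follows essentially the same route as the paper's own argument: expand the curl of $V^{-1}R(\theta)\tilde V$ via the product rule, use $\partial_j R=(\partial_j\theta)RW$ together with the cofactor identity $W\tilde VW=-(\det\tilde V)\tilde V^{-1}$ to isolate $\nabla\theta$, then invoke the conformal/anticonformal splitting and the commutation rules \eqref{uno2} to obtain \eqref{equ-theta}. Your explicit remarks on reversibility and on the two uses of simple connectedness (lifting $R$ to $\theta$ and recovering $\xi$ from its gradient) are a welcome addition that the paper leaves implicit.
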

\begin{proof}
We calculate the expression in (\ref{6.1}): 
\begin{equation*}
\begin{split}
{\rm curl} (V^{-1} R \tilde V) & 
= \partial_1 (V^{-1} R \tilde V   e_2)- \partial_2 (V^{-1} R \tilde V   e_1)   \\ & =
(\partial_1 V^{-1} R \tilde V + V^{-1} R \partial_1 \tilde V )   e_2 
-  (\partial_2 V^{-1} R \tilde V + V^{-1} R \partial_2 \tilde V )  
e_1 \\ & \qquad + 
(\partial_1 \theta) V^{-1} RW \tilde V  e_2 - (\partial_2 \theta)V^{-1} RW \tilde V   e_1   
\end{split} 
\end{equation*}
The two last terms above equal, in view of (\ref{uno0}):
$$  (\partial_1 \theta) V^{-1} RW \tilde V W   e_1 + (\partial_2
\theta) V^{-1} RW \tilde V W e_2 = (V^{-1}RW\tilde V
W)(\nabla\theta)^T = (\det\tilde V) \big(V^{-1} R\tilde V^{-1}\big)(\nabla\theta)^T.$$ 
Therefore, by (\ref{6.1}) and (\ref{uno2}):
\begin{equation*}
\begin{split}
\nabla \theta &  = -\frac{1}{\det\tilde V} \big(\tilde V R^T \tilde V\big) \Big(-(\partial_1
V^{-1} R \tilde V + V^{-1} R \partial_1 \tilde V )   e_2  
+  (\partial_2 V^{-1} R \tilde V + V^{-1} R \partial_2 \tilde V )   e_1\Big) \\ & 
= \frac{1}{\det\tilde V} \Big(\tilde V R^T V \partial_1 V^{-1} R
\tilde V   e_2 - \tilde V^{-1} R^T V \partial_2 V^{-1} R \tilde V    e_1  
 + \tilde V^{-1} \partial_1 \tilde V    e_2 - \tilde V^{-1}  \partial_2 \tilde V   e_1\Big).
\end{split} 
\end{equation*}
We simplify the equation for $\theta$ as follows:
\begin{equation*}
\begin{split}
\nabla \theta & = \frac {1} {\det \tilde V}  \Big( \tilde V A_1 \tilde V   e_2 
-  \tilde V A_2 \tilde V  e_1 + \tilde V R(-2\theta)  B_1\tilde V   e_2 
 -   \tilde V R(-2\theta) B_2 \tilde V e_1 \\ 
 & \qquad\qquad\quad +  \tilde V  \partial_1 \tilde V    e_2 - \tilde V \partial_2 \tilde V   e_1\Big),
\end{split}
\end{equation*} 
which clearly implies (\ref{equ-theta}).
\end{proof}
 
\medskip

If the Thomas condition for (\ref{equ-theta}) is satisfied, there
exists a unique solution to (\ref{equ-theta}) for all initial values $\theta(x_0) =
\theta_0$. The original problem (\ref{trzy}) has then 
a unique solution for all initial values of the form 
$w(x_0)= \nabla \xi(x_0) = G^{-1/2} R_0 \tilde G^{1/2}(x_0)$ with $R_0 \in SO(2)$.

\begin{example}
Assume that $G={\rm Id}_2$. Then $A_i=B_i=0$, so that \eqref{equ-theta} becomes:
\begin{equation}\label{sterma}
\nabla \theta = h_{\tilde V}\quad\mbox{ where }  \quad h_{\tilde V} = \frac{1} {\det \tilde V} \tilde
V {\rm curl} \tilde V.  
\end{equation}
The condition for solvability of \eqref{sterma} is: ${\rm curl}
~h_{\tilde V} \equiv 0$, which is expected from Example
\ref{ex4.4}. Indeed, a direct calculation shows that (see also
\cite[Remark 2, page 113]{MD}) this condition is 
equivalent to the vanishing of the Gaussian curvature of $\tilde G$:
\begin{equation}\label{curve-form}  
\kappa (\tilde G) = - \frac{1}{\det \tilde V} {\rm curl} \, h_{\tilde V}.  
\end{equation} 
\end{example}  
 
\begin{example} 
As in Examples \ref{ex4.5} and \ref{lambda-mu}, assume that $ G =
\mu(x) {\rm Id}_2 = e^{-2f} {\rm Id}_2$, with $f$ given in (\ref{trzy1}). Then:
\begin{equation}\label{cosik} 
V \partial_i V^{-1} = -(\frac{1}{2\mu} \partial_i \mu) {\rm Id} =
(\partial_if) {\rm Id}_2. 
\end{equation}
This implies that $A_i  =  (\partial_i f)\, {\rm Id}_2$ and $B_i = 0$. Consequently, \eqref{equ-theta} becomes:
$$ \nabla \theta =  \frac{1} {\det \tilde V} \tilde V  {\rm curl}\, \tilde V 
 + \frac{1} {\det \tilde V} \tilde V^2  \big((\partial_1 f)e_2 -
 (\partial_2 f)e_1\big)  =   h_{\tilde V} + \frac{1} {\sqrt {\det \tilde G}} \tilde G ~ \nabla^\perp f. $$ 
But by (\ref{uno0}) it follows that:
\begin{equation*}
\begin{aligned} 
\frac{1} {\sqrt {\det \tilde G}}  \tilde G~  \nabla^\perp f & =  \frac{1} {\sqrt {\det \tilde G}}  \tilde G W \nabla f 
 =  \frac{1} {\sqrt {\det \tilde G}}  W^T W \tilde G W \nabla f =
 {\sqrt {\det \tilde G}} ~ W \tilde G^{-1} \nabla f   \\
 & =  \big(\sqrt {\det \tilde G}  ~\tilde G^{-1} \, \nabla f \big)^\perp,
\end{aligned}  
\end{equation*}
and hence:
$$ {\rm curl} \Big (\frac{1} {\sqrt {\det \tilde G}} ~\tilde G
~ \nabla^\perp f \Big ) = \mbox{div}\big(\sqrt{\det \tilde G} ~\tilde
G^{-1} \nabla f\big) = \sqrt {\det \tilde G} \, \Delta_{\tilde G} f. $$ 
Thus, the Thomas condition to (\ref{trzy}) is here:
$$ {\rm curl} \, h_{\tilde V} +\sqrt {\det \tilde G} \, \Delta_{\tilde
  G} f= 0, $$ 
and we see that,  in view of \eqref{curve-form}, it coincides with the equivalent
condition (\ref{name}) for existence of solutions to (\ref{trzy}).
\end{example}
   
\medskip

\begin{lemma}\label{lem_mn}
Without loss of generality and through a change of variable, we can assume that: 
$$\tilde G = \lambda(x) {\rm Id}_2 = e^{2g}  {\rm Id}_2\qquad
\mbox{with } \quad g= \frac{1}{2}\log\lambda.$$ 
Then, for an arbitrary metric $G$, condition (\ref{equ-theta}) becomes:  
\begin{equation}\label{equ-theta-m-n}
\nabla \theta =  \textswab{m}  +  R(-2\theta)  \textswab{n},
\end{equation} 
where $ \textswab{m} =  \nabla^\perp g +  (A_1e_2 - A_2
e_1)$ and $\textswab{n}=  B_1e_2 - B_2 e_1$.
The Thomas condition of (\ref{equ-theta-m-n}) reads:
\begin{equation}\label{Thomas-Theta}
\left \{
\begin{array}{l} 
{\rm curl} \,  \textswab{m} - 2 | \textswab{n}|^2 =0, \\
{\rm div}\,   \textswab{n} - 2 \langle\textswab{n}^\perp ,  \textswab{m}\rangle =0 , \\
{\rm curl}\,  \textswab{n} - 2 \langle  \textswab{n},  \textswab{m} \rangle=0.
\end{array}\right .
\end{equation} 
\end{lemma}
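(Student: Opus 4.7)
The plan is to prove the lemma in two stages: first, to justify the conformal normalization $\tilde G = e^{2g}{\rm Id}_2$ and derive the simplified form of (\ref{equ-theta}); second, to compute the Thomas condition associated with that simplified system and match it term-by-term against (\ref{Thomas-Theta}).

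For the first stage, I would invoke the classical existence of isothermal coordinates for every smooth $2$d Riemannian metric --- a diffeomorphism $\Phi$ of $\Omega$ under which $\tilde G$ becomes a scalar multiple of ${\rm Id}_2$, say $e^{2g}{\rm Id}_2$. Because the structural relation (\ref{trzy}) is covariant under changes of variable in $\Omega$ (with $G$ simultaneously transported), this normalization preserves the problem. Under it $\tilde V = e^g {\rm Id}_2$ commutes with every matrix, so the three summands on the right-hand side of (\ref{equ-theta}) collapse: a short computation using ${\rm curl}\,\tilde V = e^g \nabla^\perp g$ and $\tilde V^2/\det\tilde V = {\rm Id}_2$ yields
\begin{equation*}
\nabla\theta = \nabla^\perp g + (A_1 e_2 - A_2 e_1) + R(-2\theta)(B_1 e_2 - B_2 e_1) = \textswab{m} + R(-2\theta)\textswab{n},
\end{equation*}
which is the asserted form.

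For the second stage I would apply the Thomas condition (\ref{Thomas-1}) to the right-hand side $f_i(x,\theta) = \textswab{m}_i(x) + [R(-2\theta)\textswab{n}(x)]_i$, treating $\theta$ as the scalar unknown. Using $\frac{d}{d\theta}R(-2\theta) = -2R(-2\theta)W$ together with the commutativity of $R$ and $W$, one gets $\partial_\theta f = -2 W R(-2\theta)\textswab{n}$; the elementary $2$d identity $a\times Wb = -\langle a,b\rangle$ then converts the mixed term $f\times \partial_\theta f$ into $2\langle f, R(-2\theta)\textswab{n}\rangle$, collapsing (\ref{Thomas-1}) to the single scalar equation
\begin{equation*}
{\rm curl}\,\textswab{m} + {\rm curl}\bigl(R(-2\theta)\textswab{n}\bigr) = 2|\textswab{n}|^2 + 2\langle \textswab{m}, R(-2\theta)\textswab{n}\rangle.
\end{equation*}

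The final step expands this identity in $\theta$. Writing $R(-2\theta) = \cos(2\theta){\rm Id} - \sin(2\theta)W$ and using ${\rm curl}(Wv) = {\rm div}\,v$ together with $\langle\textswab{m}, W\textswab{n}\rangle = \langle\textswab{n}^\perp, \textswab{m}\rangle$, both sides become affine combinations of $1,\cos(2\theta),\sin(2\theta)$. The Thomas condition requires the identity to hold for all $(x,\theta)$, so the three Fourier modes must vanish independently, yielding precisely the three equations in (\ref{Thomas-Theta}). The only real obstacle is the bookkeeping in this last step: one must keep straight the sign conventions for $W$, $\perp$, and transposition, and verify that $1$, $\cos(2\theta)$ and $\sin(2\theta)$ are linearly independent as functions on $\R$ (which is immediate). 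No deeper analytic input is needed beyond the existence of isothermal coordinates invoked in the first stage.
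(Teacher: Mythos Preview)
Your proposal is correct and follows essentially the same route as the paper: reduce $\tilde G$ to conformal form via isothermal coordinates, collapse (\ref{equ-theta}) using $\tilde V = e^g\mathrm{Id}_2$, then compute the integrability/Thomas condition as a single scalar identity and split it into the three equations (\ref{Thomas-Theta}) by linear independence of $1,\cos(2\theta),\sin(2\theta)$. The only cosmetic difference is that the paper carries out the direct computation of $\mathrm{curl}\,\nabla\theta$ with substitution, whereas you invoke the abstract form (\ref{Thomas-1}); these are the same calculation, and your own caveat about sign bookkeeping for $W$, $\perp$ and the $2$d cross product is exactly where care is needed (e.g.\ with the paper's convention $Wv=v^\perp$ one has $a\times Wb = +\langle a,b\rangle$, but your final displayed equation is correct regardless).
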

\begin{proof} 
As in (\ref{cosik}), we observe that $\tilde V\mbox{curl}\tilde V =
e^{2g}\nabla^\perp g$, and hence:
\begin{equation*} 
\nabla \theta = \nabla^\perp g +  \big(A_1 e_2  -  A_2 e_1 \big) 
+   R(-2\theta)  \big(B_1e_2 - B_2e_1\big).
\end{equation*} 
The above equation has a similar structure to (24) in
\cite{duda}, even though the two original problems are different. In
the present case, both $G$ and $\tilde G $ are involved in defining $ \textswab{m}$, while in
\cite{duda} the vector fields $ \textswab{m},\textswab{n}$ depend on
the matrix field $G$ in the Left Cauchy-Green equation: 
$(\nabla\eta) (\nabla \eta)^T =G$.

In order to derive the Thomas condition for (\ref{equ-theta-m-n}), note that:
\begin{equation*}
\begin{aligned}
{\rm curl} \nabla  \theta & = {\rm curl}\,  \textswab{m}
+ \partial_1 \langle R(-2\theta)  \textswab{n} ,   e_2\rangle  -  
\partial_2 \langle R(-2\theta)  \textswab{n},   e_1\rangle  \\ & =
{\rm curl} \,  \textswab{m} + \langle R(-2\theta) \partial_1  \textswab{n} ,   e_2 \rangle 
- \langle R(-2\theta) \partial_2  \textswab{n},   e_1 \rangle  -
2\partial_1 \theta \langle R(-2\theta) W  \textswab{n} ,  
  e_2 \rangle + 2 \partial_2\theta \langle R(-2\theta) W  \textswab{n},   e_1 \rangle.    
\end{aligned} 
\end{equation*} 
Substituting $\partial_i\theta$ from \eqref{equ-theta-m-n} we arrive at:
\begin{equation*}
\begin{aligned}
0 & = {\rm curl}\,  \textswab{m} + \langle R(-2\theta) \partial_1  \textswab{n} ,   e_2 \rangle 
- \langle R(-2\theta) \partial_2  \textswab{n},   e_1 \rangle \\ & 
\qquad -  2   \langle  \textswab{m} + R(-2\theta)  \textswab{n} ,   e_1 \rangle \langle R(-2\theta) W  \textswab{n} , 
W   e_1 \rangle  - 2  \langle   \textswab{m} + R(-2\theta)
\textswab{n} ,   e_2 \rangle \langle R(-2\theta) W  \textswab{n} ,  
W   e_2 \rangle \\ & =  {\rm curl} \,  \textswab{m} + \langle R(-2\theta) \partial_1  \textswab{n} ,   e_2 \rangle 
- \langle R(-2\theta) \partial_2  \textswab{n},   e_1 \rangle 
-  2   \langle  \textswab{m} + R(-2\theta)  \textswab{n} ,  R(-2\theta)  \textswab{n} \rangle \\ & =
 {\rm curl} \,  \textswab{m} + \langle    \partial_1  \textswab{n} ,  R(2\theta)  W   e_1 \rangle 
+ \langle  \partial_2  \textswab{n}, R(2\theta) W   e_2 \rangle - 2 |\textswab{n}|^2 
- 2\langle \textswab{m} ,  R(-2\theta)  \textswab{n} \rangle
\\ & =  {\rm curl} \,  \textswab{m} - 2 | \textswab{n}|^2 + \langle   \nabla  \textswab{n}\, :  R(2\theta)  W   \rangle 
  -2  \langle  \textswab{m} ,  R(-2\theta)  \textswab{n} \rangle \\ & 
  =  {\rm curl} \,  \textswab{m} - 2 | \textswab{n}|^2 + \langle   \nabla  \textswab{n} - 2
  W  \textswab{n} \otimes  \textswab{m} \,  :   R(2\theta) W  \rangle.  
 \end{aligned} 
\end{equation*}
Finally, writing $R(2\theta)W= R(2\theta+ \pi/2) = -\sin(2\theta) {\rm Id}_2 + \cos(2\theta)W$, we obtain:
\begin{equation*}
0 =  {\rm curl} \,  \textswab{m} - 2 | \textswab{n}|^2  - \sin
(2\theta) ({\rm div}\,   \textswab{n} - 2 \langle\textswab{n}^\perp ,  \textswab{m}\rangle) 
+ \cos (2\theta) ({\rm curl}\,  \textswab{n} - 2 \langle  \textswab{n},  \textswab{m} \rangle),  
\end{equation*} 
which should be satisfied for all $x\in\Omega$ and all $\theta\in[0,
2\pi)$, implying hence \eqref{equ-theta-m-n}.
\end{proof}

\begin{example} 
In the setting and using notation of Example \ref{lambda-mu}, we see that:
$ \textswab{m} = \nabla^\perp g + \nabla^\perp f = \nabla^\perp (f+g)
= \nabla^\perp h$ where $h=\frac{1}{2}\log\frac{\lambda}{\mu}$, and $\textswab{n} =0$.
The Thomas condition for (\ref{trzy}) here is hence: ${\rm curl} \,
\textswab{m} = 0$, that is equivalent to: 
$$ \Delta h = \Delta \log (\frac\lambda\mu) =0,$$
which is further exactly equivalent to existence of solutions in  Example \ref{lambda-mu}.
\end{example} 

\medskip

\begin{example} \label{coolexample}
We will now provide an example, where  the Thomas condition
\eqref{Thomas-Theta} is not  satisfied, but a solution to (\ref{trzy})
exists.
We start with requesting that $\tilde G = {\rm Id}_2$, which yields: $\textswab{m}=  A_1e_2 - A_2e_1$ and 
$\textswab{n} = B_1e_2 - B_2e_1$. Consider the general form of the diagonal
metric $G$:
$$ G(x)= \left [ \begin{array}{cc} e^{2a(x)} & 0 \\ 0 & e^{2b(x)} \end{array} \right],  $$ 
for some smooth functions $a,b:\bar\Omega\to\mathbb{R}$. Then:
$V \partial_iV^{-1}  = - \mbox{diag}\{\partial_ia, \partial_ib\}$, and hence:
$$ A_i = - \frac 12 \partial_i(a+b) \mbox{Id}_2,
\qquad B_i=\frac 12 \mbox{diag}\{\partial_i(b-a), \partial_i(a-b)\},$$
which leads to: 
$$  \textswab{m}= - \frac 12 \nabla^\perp (a+b) \quad \mbox{ and }
\quad \textswab{n} = \frac 12 \big(\partial_2(a-b), \partial_1 (a-b) \big).$$ 
Therefore, the Thomas condition \eqref{Thomas-Theta} becomes:
\begin{equation*} 
\left \{
\begin{array}{l} 
\Delta (a+b)  +  |\nabla (a-b)|^2 =0, \vspace{1mm} \\
\partial_{12}(a-b) + \frac{1}{2} \partial_1(a-b) \partial_2(a+b) 
+ \frac{1}{2}\partial_2(a-b) \partial_1(a+b) =0, \vspace{1mm}\\
  \partial_{11}(a-b) - \partial_{22}(a-b) + \partial_1(a-b) \partial_1(a+b) 
- \partial_2(a-b) \partial_2(a+b) = 0.
\end{array}\right .
\end{equation*}  
It can be simplified to the following form, symmetric in $a$ and $b$:
\begin{equation} \label{mstupid}
\left \{
\begin{array}{l} 
\Delta (a+b)  +  |\nabla (a-b)|^2 =0, \vspace{1mm} \\
2\partial_{12} a +  \partial_1a \partial_2a =  2\partial_{12} b + \partial_1b \partial_2b, \vspace{1mm}\\
  \partial_{11} a - \partial_{22} a + (\partial_1 a)^2 - (\partial_2 a)^2 =  
\partial_{11} b - \partial_{22} b + (\partial_1 b)^2 - (\partial_2 b)^2. 
\end{array}\right.
\end{equation} 

\smallskip

We now specify to the claimed example. Take $a=-\log x_1$ and $b=-\log x_2$, so that:
$$ G(x_1 ,x_2) = \left [ \begin{array}{cc} x_1 ^{-2} & 0 \\ 0 &
    x_2^{-2}  \end{array} \right ],  $$ 
defined on a domain $\Omega\subset \R^2$ in the positive quadrant, whose closure avoids $0$. 
We easily check that the first condition in  (\ref{mstupid}) does not hold, since:
$$ \Delta \log (x_1x_2)  +  |\nabla \log (\frac{x_1}{x_2})|^2 =
- \partial_{11} \log x_1 + |\partial_1 \log x_1|^2 - \partial_{22}
\log x_2 + |\partial_2 \log x_2|^2 = 2(\frac{1}{x_1^2} + \frac{1}{x_2^2}). $$ 
On the other hand,  $\xi_0(x_1,x_2)= \frac 12 (x_1^2, x_2 ^2)$ solves
(\ref{trzy}), because:
$$ (\nabla \xi_0)^T G\, \nabla \xi_0 = \left [ \begin{array}{cc} x_1 & 0
    \\ 0 & x_2  \end{array} \right ] \left [ \begin{array}{cc}
    x_1^{-2} & 0 \\ 0 & x_2^{-2}  \end{array} \right ] 
 \left [ \begin{array}{cc} x_1 & 0 \\ 0 & x_2  \end{array} \right ] =
 {\rm Id}_2 = \tilde G.$$ 
In fact, by reversing the calculations in the proof
of Lemma \ref{lem_mn}, in can be checked  that $\xi_0$ and $-\xi_0$
are the only two solutions to the problem (\ref{trzy}).
\end{example}

\section{A sufficient condition for the solvability of the problem
  (\ref{iii}) (\ref{init})}\label{suffisec}

In this section we go back to the general setting of $n\geq 2$
dimensional problem (\ref{trzy}) and its equivalent formulation (\ref{iii}).
Let $w_0\in\mathbb{R}^{n\times n}$ be an invertible matrix, such that:
$$\tilde G(x_0) = w_0^T G(x_0) w_0.$$ 
For every $s, i,j:1\ldots n$ let $f^{s,i}_j:\mathcal{O}\to\mathbb{R}$ be the smooth functions of $(x,
w)$, defined in a small neighborhood $\mathcal{O}$ of $(x_0, w_0)$, and
coinciding with the right hand side of (\ref{iii}):
$$f^{s,i}_j(x,w) =  w_m^s\tilde\Gamma_{ij}^m 
-\frac{1}{2}G^{sm}  \Big(w_i^p\partial_j G_{mp}
+ w_j^p\partial_iG_{mp} - w_j^p w_i^q\partial_tG_{pq} (w^{-1})_m^t\Big),$$
where $w_s^m$ denotes the element in the $s$-th row and $m$-th column
of the matrix $w$.

Differentiate $f_j^{s,i}$ formally in $\partial/\partial x_k$,
treating $w$ as a function of $x$, and substitute each partial
derivative $\partial_k w^p_q$ by the expression in $f_k^{p,q}$. We
call this new function $\tilde F^{s,i}_{j,k}$ and note that for $w$
satisfying (\ref{iii}) one has: $\partial_k\partial_j w_i^s(x) = \tilde
F^{s,i}_{j,k}(x, w(x))$ for every $x\in\Omega$.

Now, Thomas' sufficient condition \cite{thom} for the local
solvability of the total differential equation (\ref{iii}) with
the initial condition (\ref{init}) where $w(x_0)= w_0$, is that:
\begin{equation}\label{Thomas} 
\forall {s,i,k,j:1\ldots n} \qquad   F^{s,i}_{j,k} = \tilde F^{s,i}_{j,k} - \tilde F^{s,i}_{k,j} \equiv 0
\quad \mbox{in } \mathcal{O}. 
\end{equation}
Following \cite{LCG-acha}, we now derive another sufficient local
solvability condition. For convenience of the reader, we present the
whole argument as in section 4 \cite{LCG-acha}.

\medskip

For every $\alpha_1=1\ldots n$, let $F^{s,i}_{j,k,\alpha_1}=0$ be the
equation obtained after formally differentiating $F_{j,k}^{s,i}=0$ in
$\partial/\partial {x_{\alpha_1}}$ and replacing each partial derivative
$\partial_{\alpha_1}w_l^r$ by $f^{r,l}_{\alpha_1}$ as
before. Inductively, for every $m\geq 0$ and all $\alpha_1,
\alpha_2\ldots \alpha_m:1\ldots n$, we define the equations
$F^{s,i}_{j,k,\alpha_1\ldots \alpha_m}=0$.

\begin{theorem}\label{algebraic}
Fix $N\geq 0$. Assume that there exists a set $S\subset\{1\ldots
n\}^2$ of cardinality $1\leq \# S = M \leq n^2$, and there exist $M$ equations
$\bar F^1=0$, $\bar F^2=0$, \ldots $\bar F^M=0$, among the equations:
\begin{equation}\label{eqns}
\big\{F^{s,i}_{j,k} = 0\big\}_{s,i,j,k} \cup
\big\{F^{s,i}_{j,k,\alpha_1} = 0\big\}_{s,i,j,k,\alpha_1} \cup \ldots \cup
\big\{F^{s,i}_{j,k,\alpha_1\ldots \alpha_N} = 0\big\}_{s,i,j,k,
  \alpha_1\ldots \alpha_N},  
\end{equation}
such that:
\begin{equation}\label{invert}
\det\left[\frac{\partial \bar F^r}{\partial w_q^p} (x_0, w_0)
\right]_{\hspace{-2mm}\begin{array}{l} {\scriptstyle{r:1\ldots M}}
    \vspace{-2mm}\\  {\scriptstyle{(p,q)\in S}}\end{array}} \neq
0.\vspace{-1.5mm} 
\end{equation}
From now on, we will denote by $w^p_q$ the coefficients in the given
matrix $w$ with $(p,q)\in S$ and by $\bar w_t^l$ the coefficients with
$(l,t)\not\in S$. By the implicit function theorem, (\ref{invert})
guarantees existence of smooth functions $v^p_q:\mathcal{U}\times
\mathcal{V}\to\mathbb{R}$  for every $(p,q)\in S$, where $\{v^p_q(x, \{\bar w^l_t\}_{(l,t)\not\in
  S})\}_{(p,q)\in S}\in\mathbb{R}^M$ is defined for $x$ in a small
neighborhood $\mathcal{U}$ of $x_0$ and for $\{\bar v^l_t\}$ in a small
neighborhood $\mathcal{V}$ of $\{(\bar w_0)^l_t\}_{(l,t)\not\in S}$, satisfying:
\begin{equation}\label{gooda}
\begin{split}
& \forall (p,q)\in S \qquad v^p_q(x_0, \{(\bar w_0)^l_t\}) = (\bar w_0)^p_q,\\ 
& \forall r:1\ldots M \quad \forall x\in\mathcal{U} \quad \forall
\{\bar v^l_t\}\in \mathcal{V}\qquad \bar F^r\Big(x, \{\bar v^l_t\}_{(l,
  t)\not\in S}, \big\{v^p_q(x, \{\bar v^l_t\}_{(l, t)\not\in S})\big\}_{(p,
  q)\in S}\Big) =0.
\end{split}
\end{equation}
Assume further that:
\begin{equation}\label{satis}
\begin{split}
\forall m:0\ldots N+1 \quad \forall i,s,j,\alpha_1\ldots \alpha_m
\quad & \forall x\in\mathcal{U} \quad \forall
\{\bar v^l_t\}\in \mathcal{V}\qquad\\
& F^{s,i}_{j,k,\alpha_1\ldots\alpha_m}\Big(x, \{\bar v^l_t\}, \big\{v^p_q(x, \{\bar v^l_t\})\big\}\Big) =0.
\end{split}
\end{equation}
Then the problem (\ref{iii}) (\ref{init}) has a solution, defined in some small  
neighborhood $\mathcal{U}$ of $x_0$, such that $w(x_0) = w_0$.
\end{theorem}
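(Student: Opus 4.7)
The plan is to apply Thomas' classical existence theorem not to the full system \eqref{iii} on $\Omega \times \mathbb{R}^{n\times n}$ --- where the identity \eqref{Thomas-1} generically fails --- but to a \emph{reduced} system obtained by restricting to the submanifold cut out by the $M$ chosen equations $\bar F^1,\ldots,\bar F^M$. Hypothesis \eqref{satis} then functions as a relaxed, ``on-shell'' Thomas identity, required to hold only along this submanifold rather than throughout the ambient $(x,w)$-space.

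\medskip

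\emph{Step 1. Parametrising the submanifold.} The non-degeneracy \eqref{invert} together with the implicit function theorem delivers the functions $v^p_q$ of \eqref{gooda}, so that
$$\Sigma := \bigl\{(x,w)\in \mathcal{U}\times\mathcal{O} \,:\, \bar F^1=\cdots=\bar F^M=0\bigr\}$$
is locally the graph $w^p_q = v^p_q(x,\bar w)$, $(p,q)\in S$, over the coordinates $(x,\bar w)\in \mathcal{U}\times\mathcal{V}$, with $(x_0,w_0)\in\Sigma$.

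\emph{Step 2. A reduced Pfaffian system and its integrability.} On $\mathcal{U}\times\mathcal{V}$ introduce
$$g^{l,t}_j(x,\bar w) := f^{l,t}_j\bigl(x,\,\bar w,\, v(x,\bar w)\bigr), \qquad (l,t)\notin S,\ j=1,\ldots, n,$$
and consider the square total-differential system $\partial_j \bar w^l_t = g^{l,t}_j(x,\bar w)$ with initial datum $\bar w(x_0) = \{(\bar w_0)^l_t\}_{(l,t)\notin S}$. I claim this reduced system satisfies Thomas' identity \eqref{Thomas-1} identically on $\mathcal{U}\times\mathcal{V}$. A direct chain-rule computation shows that its Thomas defect equals
$$F^{l,t}_{j,k}\big|_{\Sigma} \;+\; \bigl(\text{correction terms linear in } \partial v^p_q/\partial x,\ \partial v^p_q/\partial \bar w\bigr).$$
Differentiating the identity $\bar F^r(x,\bar w, v(x,\bar w))\equiv 0$ successively in $x$ and $\bar w$ expresses each $\partial v^p_q/\partial x_k$ and $\partial v^p_q/\partial \bar w^{l'}_{t'}$ algebraically in terms of the partial derivatives of $\bar F^r$, and substitution rewrites every correction term as a linear combination of the higher-order defects $F^{s,i}_{j,k,\alpha_1\ldots\alpha_m}\big|_\Sigma$ with $m\leq N+1$. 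Hypothesis \eqref{satis} then forces all of these to vanish, and with them the full defect.

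\emph{Step 3. Construction of $w$ and consistency.} Thomas' theorem \cite{thom} applied to the reduced system yields a unique smooth solution $\bar w(x)$ on some $\mathcal{U}'\subset\mathcal{U}$ with $\bar w(x_0)=\{(\bar w_0)^l_t\}$. Set $w^l_t(x):=\bar w^l_t(x)$ for $(l,t)\notin S$ and $w^p_q(x):=v^p_q(x,\bar w(x))$ for $(p,q)\in S$, so that $w(x_0)=w_0$ and $w(x)\in\Sigma$ on $\mathcal{U}'$. The equations of \eqref{iii} indexed by $(l,t)\notin S$ are precisely the reduced system. For the equations indexed by $(p,q)\in S$, the chain rule gives
$$\partial_j w^p_q = \frac{\partial v^p_q}{\partial x_j} + \frac{\partial v^p_q}{\partial \bar w^{l'}_{t'}}\,g^{l',t'}_j(x,\bar w(x)),$$
and the difference $\partial_j w^p_q - f^{p,q}_j(x,w(x))$ vanishes on $\Sigma$ by the same prolonged identities used in Step 2; equivalently, the horizontal vector fields $\partial/\partial x_j + f^{s,i}_j\,\partial/\partial w^s_i$ are tangent to $\Sigma$, which one verifies by applying them to each $\bar F^r$ and recognising the outcome as a combination of the $F^{\cdot}_{\cdot,\cdot,\alpha_1\ldots\alpha_m}$ with $m\leq N+1$.

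\emph{Main obstacle.} The technical heart is the bookkeeping in Step 2: one must check that prolonging the constraints $\bar F^r=0$ up to order $N+1$ supplies precisely the algebraic relations needed to absorb every extraneous term generated by differentiating through $v$. In the classical isometric-immersion situation of Example \ref{ex4.4} the Thomas defect already closes at order $0$ via the Riemann curvature tensor, but for \eqref{iii} such closure generically fails and the prolongation depth $N+1$ prescribed by \eqref{satis} is genuinely required.
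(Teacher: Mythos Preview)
Your proposal is correct and follows the same strategy as the paper: parametrise the constraint manifold $\Sigma$ via the implicit function theorem, verify Thomas' condition for the reduced total-differential system in the free variables $\{\bar w^l_t\}_{(l,t)\notin S}$, solve it, and then check that the constrained components $w^p_q=v^p_q(x,\bar w(x))$ automatically satisfy their equations. The one organisational difference is that the paper isolates as a separate step the tangency identity
\[
\frac{\partial v^p_q}{\partial x_\alpha}+\sum_{(l,t)\notin S}\frac{\partial v^p_q}{\partial \bar v^l_t}\,f^{l,t}_\alpha-f^{p,q}_\alpha=0\quad\text{on }\Sigma
\]
(your ``horizontal vector fields are tangent to $\Sigma$''), deriving it through an auxiliary ODE along a curve before using it twice---once for the reduced Thomas condition and once for the $(p,q)\in S$ equations---whereas you obtain it more directly by differentiating $\bar F^r\equiv 0$ and inverting $[\partial\bar F^r/\partial w^p_q]$; both routes rest on the same observation that the formal total derivative of each $\bar F^r$ is an $F$ one level higher and hence vanishes on $\Sigma$ by \eqref{satis}.
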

\begin{proof}
{\bf 1.} Below, we drop the Einstein summation convention and use the
$\sum$ sign instead, for more clarity. By $\mathcal{U}$ and
$\mathcal{V}$ we always denote appropriately small neighborhoods of
$x_0\in\mathbb{R}^n$ and $\{(\bar w_0)^l_t\}_{(l,t)\not\in
  S}\in\mathbb{R}^{n^2 - M}$, respectively, although the sets $\mathcal{U}$ and
$\mathcal{V}$ may vary from step to step. 

We will seek for a solution in the form $\Big(\{\bar
w^l_t(x)\}, \{w^p_q(x) = v^p_q\big(x, \{\bar w^l_t(x)\}\big)\}\Big)$, where the
functions $v^p_q$ are defined in the statement of the theorem.
Note first that by (\ref{satis}), for every $\alpha:1\ldots n$, every
$m:0\ldots N$, and $s,i,j,k,\alpha_1\ldots\alpha_m$,
after denoting: $F = F^{s,i}_{j,k,\alpha_1\ldots \alpha_m}$ we have:
\begin{equation}\label{13}
\frac{\partial F}{\partial x_\alpha} (x,v)  + \sum_{(l,t)\not\in S}
\frac{\partial F}{\partial \bar w^l_t}(x,v) f^{l,t}_\alpha(x, v)  +  \sum_{(p,q)\in S}
\frac{\partial F}{\partial  w^p_q}(x,v) f^{p,q}_\alpha(x, v) = 0, 
\end{equation}
for all $x\in\mathcal{U}$ and all $\{\bar v^l_t\}_{(l,t)\not\in
  S}\in\mathcal{V}$, where above $v = \Big\{\{\bar
v^l_t\}_{(l,t)\not\in S}, \{v^p_q\}_{(p,q)\in S}\Big\}$ and $v^p_q(x)
  = v^p_q\big(x, \{\bar v^l_t(x)\}\big)$ for all $(p,q)\in  S$.

\medskip

{\bf 2.} Fix now any point $x\in\mathcal{U}$ and let
$[0,\eta]\ni\tau\mapsto g(\tau) = \{g^\alpha(\tau)\}_{\alpha=1\ldots n}\in
\mathcal{U}$ be a smooth path with constant speed, such that:
$g(0)=x$. Consider the solution $[0,\eta]\ni\tau\mapsto h(\tau) =
\{h^l_t(\tau)\}_{(l,t)\not\in S}\in\mathcal{V}$ of the
following initial value problem:
\begin{equation}\label{ode}
\left\{ \begin{aligned}
& \frac{\mbox{d}}{\mbox{d}\tau} h^l_t(\tau) = \sum_{\alpha=1\ldots n}
f^{l,t}_\alpha\Big( g(\tau), h(\tau), \{v^p_q(g(\tau), h(\tau))\}\Big)
\frac{\mbox{d}g^\alpha}{\mbox{d}\tau}(\tau)\qquad \forall \tau\in[0,1]
\quad\forall (l,t)\not\in S\\
& h^l_t(0) = \bar v^l_t.
\end{aligned} \right.
\end{equation}
Applying (\ref{satis}), we easily see that for $F$ denoting, as in
step 1, any function in the set of equation (\ref{eqns}), there holds:
$$ F\Big(g(\tau), h(\tau), \big\{v^p_q(g(\tau), h(\tau))\big\}\Big) = 0 \qquad
\forall \tau\in [0,\eta].$$
Differentiating in $\tau$ yields:
\begin{equation*}
\begin{split}
& \sum_{\alpha=1\ldots n} \frac{\partial F}{\partial x_\alpha}(g,h, \{v^p_q(g,h)\})
\frac{\mbox{d}g^\alpha}{\mbox{d}\tau} +
\sum_{(l,t)\not\in S} \frac{\partial F}{\partial \bar w^l_t}(g,h, \{v^p_q(g,h)\})
\frac{\mbox{d}h^l_t}{\mbox{d}\tau} \\ & 
\qquad\qquad +
\sum_{(p,q)\in S} \frac{\partial F}{\partial w^p_q}(g, h, \{v^p_q(g,h)\})
\Big(\frac{\partial v^p_q}{\partial x_\alpha}(g, h) \frac{\mbox{d}g^\alpha}{\mbox{d}\tau} +
\frac{\partial v^p_q}{\partial \bar v^l_t}(g, h)
\frac{\mbox{d}h^l_t}{\mbox{d}\tau}\Big) = 0.
\end{split}
\end{equation*}
We now use (\ref{13}) to equate the first term in the expression
above. In view of (\ref{ode}), the second term cancels out and we
obtain, for the particular choice of $F=\bar F^r$, $r:1\ldots M$:
\begin{equation*}
\begin{split}
& \forall \tau\in [0,\eta] \quad \forall r:1\ldots M \qquad \\ &
\sum_{\alpha=1\ldots n}  \frac{\mbox{d}g^\alpha}{\mbox{d}\tau}(\tau) 
\sum_{(p,q)\in S} \frac{\partial \bar F^r}{\partial w^p_q}(g, h, \{v^p_q(g,h)\})
\Bigg(\sum_{(l,t)\not\in S} \frac{\partial v^p_q}{\partial \bar
  v^l_t}(g, h) f^{l,t}_\alpha(g,h, \{v^p_q(g,h)\}) \\ &
\qquad\qquad\qquad\qquad \qquad\qquad\qquad\qquad  \qquad\quad + 
\frac{\partial v^p_q}{\partial x_\alpha}(g, h) - f^{p,q}_\alpha(g,h,
\{v^p_q(g,h)\})\Bigg) = 0.
\end{split}
\end{equation*}
We proceed by evaluating the obtained formula at $\tau=0$. Since
${\mbox{d}g}/{\mbox{d}\tau}\neq 0$ is an arbitrary vector in $\mathbb{R}^n$, and since the
matrix $\left[\frac{\partial \bar F^r}{\partial w^p_q}(x, \{\bar v^l_t\})\right]$
is invertible by (\ref{invert}), it follows that:
\begin{equation}\label{formu}
\begin{split}
\forall \alpha:1\ldots n \quad \forall (p,q)\in S \qquad &
\Bigg(\sum_{(l,t)\not\in S} \frac{\partial v^p_q}{\partial \bar
  v^l_t}(x, \{\bar v^l_t\}) f^{l,t}_\alpha(x, \{\bar v^l_t\},
\{v^p_q(x, \{\bar v^l_t\})) \Bigg) \\ & \qquad\qquad + 
\frac{\partial v^p_q}{\partial x_\alpha}(x, \{\bar v^l_t\}) - f^{p,q}_\alpha(x, \{\bar v^l_t\},
\{v^p_q(x, \{\bar v^l_t\})\}) = 0,
\end{split}
\end{equation}
for all $x\in\mathcal{U}$ and $\{\bar v^l_t\}\in\mathcal{V}$.

\medskip

{\bf 3.} Consider the following system of total differential equations:
\begin{equation}\label{auxi}
\forall \alpha:1\ldots n \quad \forall (l,t)\not\in S \qquad 
\frac{\partial \bar w^l_t}{\partial x_\alpha} = f^{l,t}_\alpha\Big(
x, \{\bar w^l_t\}_{(l,t)\not\in S}, \{v^p_q(x, \big\{\bar
w^l_t\big\})\}_{(p,q)\in S} \Big).
\end{equation}
We now verify the Thomas condition for the system (\ref{auxi}). It
requires \cite{thom} the following expressions to be symmetric in $j,k:1\ldots n$, for all
$(i,s)\not\in S$ on $\mathcal{U}\times \mathcal{V}$: 
\begin{equation*}
\begin{split}
& \frac{\partial f^{i,s}_k}{\partial x_j}(x, \{\bar w^l_t\},
\{v^p_q\}) + \sum_{(l,t)\not\in S} \frac{\partial f^{i,s}_k}{\partial \bar
  w^l_t}(x, \{\bar w^l_t\},  \{v^p_q\}) f^{l,t}_j\big(x, \{\bar w^l_t\},
\{v^p_q\}\big) \\ & \qquad\qquad + 
\sum_{(p,q\in S)}\frac{\partial f^{i,s}_k}{\partial w^p_q}(x, \{\bar w^l_t\},  \{v^p_q\})\Bigg(
\frac{\partial v^p_q}{\partial x_j}(x, \{\bar w^l_t\}) +
\sum_{(l,t)\not\in S} \frac{\partial v^p_q}{\partial \bar v^l_t}(x,
\{\bar w^l_t\}) f^{l,t}_k \big(x, \{\bar w^l_t\},  \{v^p_q\}\big)\Bigg).
\end{split}
\end{equation*}
By (\ref{formu}), the above expression equals:
\begin{equation*}
\begin{split}
& \frac{\partial f^{i,s}_k}{\partial x_j}(x, \{\bar w^l_t\},
\{v^p_q\}) + \sum_{(l,t)\not\in S} \frac{\partial f^{i,s}_k}{\partial \bar
  w^l_t}(x, \{\bar w^l_t\},  \{v^p_q\}) f^{l,t}_j(x, \{\bar w^l_t\},
\{v^p_q\}) \\ & \qquad\qquad \qquad\qquad\quad
+ \sum_{(p,q)\in S}\frac{\partial f^{i,s}_k}{\partial w^p_q}(x, \{\bar
w^l_t\},  \{v^p_q\})  f^{p,q}_j (x, \{\bar w^l_t\},  \{v^p_q\}),
\end{split}
\end{equation*}
where as usual $v^p_q = v^p_q(x, \bar w^l_t)$.  We see that the required
symmetry follows exactly by (\ref{13}).

Consequently, the problem (\ref{auxi}) with the initial condition  $\bar w^l_t(x_0) = (w_0)^l_t$ has a unique
solution $\{\bar w^l_t\}_{(l,t)\not\in S}$ on $\mathcal{U}$.

\medskip

{\bf 4.} Let now $\{w^p_q\}_{(p,q)\in S}$ be defined on $\mathcal{U}$
through the formula: $w^p_q(x) = v^p_q(x, \{\bar w^l_t(x)\}_{(l,
  t)\not\in S})$. We will prove that:
\begin{equation}\label{auxi2}
\forall \alpha:1\ldots n \quad \forall (p,q)\in S \qquad 
\frac{\partial  w^p_q}{\partial x_\alpha} = f^{p,q}_\alpha\Big(
x, \{\bar w^l_t\}_{(l,t)\not\in S}, \{w^p_q\}_{(p,q)\in S} \Big).
\end{equation}
Together with (\ref{auxi}), this will establish the result claimed in the theorem.

Differentiating (\ref{gooda}) where we set $v=w$, and applying (\ref{auxi}) yields:
\begin{equation*}
\begin{split}
& \forall r:1\ldots M \quad \forall \alpha:1\ldots n \quad \forall
x\in\mathcal{U} \qquad \\ &
\frac{\partial\bar F^r}{\partial x_\alpha}(x, w(x)) + 
\sum_{(l,t)\not\in S} \frac{\partial \bar F^r}{\partial \bar
  w^l_t}(x, w(x)) f^{l,t}_\alpha(x, w(x))  + \sum_{(p,q)\in S}
\frac{\partial \bar F^r}{\partial w^p_q}(x, w(x)) \frac{\partial
  w^p_q}{\partial x_\alpha}(x) = 0.
\end{split}
\end{equation*}
In view of (\ref{13}), applied to $F=\bar F^r$, $r=1\ldots M$
and $v=w$, the desired equality in (\ref{auxi2}) follows directly by
the invertibility of the matrix
$\left[\frac{\partial \bar F^r}{\partial w^p_q}(x,
  w)\right]_{\hspace{-2mm}\begin{array}{l} {\scriptstyle{r:1\ldots M}} 
    \vspace{-2mm}\\  {\scriptstyle{(p,q)\in S}}\end{array}}$, 
which by (\ref{invert}) is valid in a sufficiently small neighborhood
$\mathcal{O}$ of $(x_0, w_0)$.
\end{proof}

\begin{remark}
Another similar approach \cite{LCG-acha},  would be as
follows. In case the Thomas condition \eqref{Thomas} is not
satisfied, one could relax the initial condition \eqref{init} and
instead add the set of equations \eqref{i} to those in \eqref{Thomas}, before proceeding to derive the 
collections of equations in (\ref{eqns}) by successive differentiation and
substitution, as described above. Assume then that
\eqref{invert} holds in a large domain rather than a given point
$(x_0, w_0)$.  The advantage of this method is that
one does not have to be concerned about satisfying the initial condition
potentially limiting the choices of the functions $\bar F^r$ in
(\ref{invert}), and hence one has a larger set of equations among
\eqref{eqns} to choose from.  Through this method,
existence of an $n^2 -M$ parameter family of
solutions to (\ref{iii}) follows, as one has the freedom to set up the initial data in step
3. The disadvantage is that a larger set  of equations in \eqref{satis}
must be satisfied for the sufficient condition to hold true.
\end{remark}

\begin{remark}

While Theorem \ref{algebraic} does provide exact algebraic conditions
of integrability for (\ref{trzy}), these conditions are by no means
easy to verify for specific problems. With a view towards a more
practical, if only approximate, solution procedure, the
following alternative may be considered. In the spirit of Section \ref{var_reform}, that
sets up an infinite-dimensional optimization problem whose zero-cost
solutions correspond to exact solutions of (\ref{trzy}) (and whose
non-zero cost solutions may be considered as approximate solutions of
(\ref{trzy})), we now construct a family of finite-dimensional
optimization problems parametrized by $x \in \Omega$ (uncoupled from
one $x$ to another), whose pointwise zero-cost solutions (for each
$x$) form the ingredients of an exact solution of (\ref{trzy}),
as outlined in Section \ref{total_remarks}. As before, we use the
Einstein summation convention for all lowercase Latin
indices from $1$ to $n$. 

Let $w \in \mathbb{R}^{n \times n},$ $x \in \Omega \subset \mathbb{R}^n$ and define:
\begin{equation}\nonumber
\begin{split}
& A_{ij} (w,x) = \tilde{G}_{ij}(x) - w^t_i G_{st}(x) w^s_j\\
& \tilde{B}^s_{ij}(w,x) = w^s_m \tilde{\Gamma}^m_{ij} (x) -
\frac{1}{2} G^{sm} (x) \left( w^p_i \partial_j G_{mp} (x) +
  w^p_j \partial_i G_{mp}(x) - w^p_j w^q_i \partial_t G_{pq} (x) W^t_m (w) \right)\\ 
& C^s_{ijk} (w,x) = \frac{\partial \tilde{B}^s_{ij}}{\partial x^k}
(w,x) - \frac{\partial \tilde{B}^s_{ik}}{\partial x^j} (w,x) +
\frac{\partial \tilde{B}^s_{ij}}{\partial w^r_p} (w,x)
\tilde{B}^r_{pk}(w,x) - \frac{\partial \tilde{B}^s_{ik}}{\partial
  w^r_p} (w,x) \tilde{B}^r_{pj}(w,x). 
\end{split}
\end{equation}
The symmetry of $\tilde{B}^s_{ij}$ in the lower indices and $C = 0$
represent the integrability conditions of (\ref{ii}) and (\ref{iii}), while
$A = 0$ is equivalent to (\ref{i}). Define:
\begin{equation}\label{algeb_cost}
\mathcal{E} (w;x) = K_1 \sum_{i,j=1}^n |A_{ij} (w,x)|^2 + K_2 \sum_{i,j,k,s=1}^n |C^s_{ijk} (w,x)|^2,
\end{equation}
with $K_i > 0, i = 1,2$ as nondimensional constants, where we assume
that $A,C$ in (\ref{algeb_cost}) have been appropriately
nondimensionalized.  

We now seek minimizers $w(x)$ of $\mathcal{E} (\cdot \ ;x)$ for each
$x \in \Omega$. Each set $\{w(x);~ x \in \Omega\}$ such that
$\mathcal{E} (w(x);x) = 0$ on $\Omega$, defines a function $x \mapsto
w(x)$, $x\in \Omega$. Each such function can be checked to see if it
satisfies (\ref{iii}), and any that passes this test allows for the
construction of a solution to (\ref{ii}) (because of the symmetry in
$i,j$) that in turn is a solution of (\ref{trzy}) in view of $A=0$.

If the system (\ref{ii}) and (\ref{iii}) were completely integrable
(i.e. satisfying the Thomas condition) then there would be a $(n^2 +
n)$-parameter family of solutions to (\ref{trzy}). While the situation
here is likely to be more constrained (with non-existence expected  in
the generic case), it is not possible to rule out, \emph{a-priori},
the existence of solution families with fewer parameters. Thus, it is
natural to expect nonuniqueness in looking for minimizers of
$\mathcal{E}(\cdot\ ; x)$. This idea has the merit of lending itself naturally to a computational algorithm.
\end{remark}
 
\medskip


\begin{thebibliography}{9999}

\bibitem{LCG-acha} 
A. Acharya,  \textit{On compatibility conditions for a Left
  Cauchy-Green strain field in 3-dimensions}, 
Journal of Elasticity, {\bf 56}, (1999), 95--105.

\bibitem{berger} 
M. Berger, \textit{A Panoramic View of Differential Geometry}, Springer (2003).

\bibitem{BLS}
K. Bhattacharya, M. Lewicka and M. Schaffner,
\textit{Plates with incompatible prestrain}, to appear in
Arch. Rational Mech. Anal. (2015).

\bibitem{Braides} 
A. Braides, \textit{$\Gamma$-convergence for Beginners},
Oxford University Press, 2002. 

\bibitem{BorelLich} 
A. Borel and A. Lichnerowicz, \textit{Groupes d'holonomie des vari\'et\'es
    riemanniennes}, C. R. Acad. Sci. Paris {\bf 234} (1952), 1835--1837.



\bibitem{B} 
R. Bryant, S.S. Chern, R. Gardner and H. Goldschmidt, \textit{Exterior
  Differential Systems}, Mathematical Sciences Research Institute
Publications (2011).


\bibitem{duda} 
F. Duda and L. Martins, \textit{Compatibility Conditions for the Cauchy-Green
Strain Fields: Solutions for the Plane Case},
Journal of Elasticity {\bf 39}, 247--264, (1995). 

\bibitem{MD} G. Dolzmann and S. Muller, \textit{Microstructures with
    finite surface energy, the two-well problem}, Archive for Rational
  Mechanics and Analysis (1995), {\bf 132}, Issue 2, 101--141.


\bibitem{Sharon2} E. Efrati, E. Sharon, and R. Kupferman, 2009,
  {Elastic theory of unconstrained  non-Euclidean plates}. 
\newblock \emph{Journal of the Mechanics and Physics of Solids}, \textbf{57}(4), 762--775.

\bibitem{Eisenhart}
L. P. Eisenhart, \textit{Non-Riemannian Geometry}, {\bf VIII} (1927),
American Mathematical Society Colloquium, New York. 

\bibitem{FJMgeo} G. Friesecke, R. James and S. M\"uller, 
\textit{A theorem on geometric rigidity and the derivation of nonlinear 
plate theory from three dimensional elasticity}, Comm. Pure. Appl. Math., 
{\bf 55} (2002), 1461--1506.  

\bibitem{FJMhier} 
G. Friesecke, R. James and S. M\"uller, \textit{A hierarchy 
of plate models derived from nonlinear elasticity by gamma-convergence}, 
Arch. Ration. Mech. Anal.,  {\bf 180}  (2006),  no. 2, 183--236.

\bibitem{gia} M. Giaquinta, \textit{Multiple Integrals in the Calculus of 
Variations and Nonlinear Elliptic Systems}, Princeton University Press, 
(1983).

\bibitem{kah} 
E. Kahler, \textit{Introduction to the theory of systems of
  differential equations}, (1934) Leipzig and Berlin.

\bibitem{kim} J. Kim, J.A. Hanna, M. Byun, C.D. Santangelo, and R.C. Hayward, \textit{Designing responsive buckled 
surfaces by halftone gel lithography,} Science {\bf 335}, (2012) 1201--1205.

\bibitem{klein} Y. Klein, E. Efrati and E. Sharon,
\textit{Shaping of elastic sheets by prescription of non-Euclidean metrics}, 
Science, {\bf 315} (2007), 1116--1120. 

\bibitem{kono} 
S. Kobayashi and K.  Nomizu, \textit{Foundations of Differential
  Geometry}, Vol. I, Wiley Classics Library (1996).


\bibitem{Raoult} M. Lewicka, A. Raoult and D. Ricciotti,
\textit{Plates with incompatible prestrain of higher order}, submitted (2015).

\bibitem{lepa} M. Lewicka and R. Pakzad, \textit{Scaling laws for non-Euclidean plates and the 
    $W^{2,2}$ isometric immersions of Riemannian metrics}, 
    ESAIM: Control, Optimisation and Calculus of Variations, {\bf 17}, no 4 (2011), 1158--1173.



\bibitem{sokolnikoff}
I. S. Sokolnikoff, \textit{Tensor Analysis, Theory and Applications}, John Wiley \& Sons, Inc. (1958).

\bibitem{thom} 
T.Y. Thomas, \textit{Systems of total differential equations defined
  oved simply connected domains}, Ann. Math., {\bf 35}, (1934),
730-734. 

\bibitem{veblen}
O. Veblen and J. M. Thomas, \textit{Projective invariants of affine geometry of paths}, Ann. Math., {\bf 27} (1926), 279-296.

\bibitem{yan}  B. Yan, \textit{Existence and Regularity Theory for Nonlinear
Elliptic Systems and Multiple Integrals in the
Calculus of Variations}, A Special Topics Course at Michigan State
University (Math 994-01, Springer). \url{http://www.math.msu.edu/~yan/full-notes.pdf} 
 
\bibitem{ziemer} 
W.P. Ziemer, \textit{Weakly differentiable functions},  Graduate Texts in Mathematics, {\bf 120},
 Springer-Verlag, New York, (1989).

\end{thebibliography}
\end{document}